\def\d{\mathrm{d}}
\newcommand{\D}{\mathcal {D}}
\newcommand{\X}{\mathcal {X}}
\newcommand{\E}{\mathbb{E}}
\newcommand{\F}{\mathcal{F}}
\newcommand{\R}{\mathbb{R}}
\newcommand{\p}{\mathbb{P}}
\newcommand{\Y}{\mathcal{Y}}
\newcommand{\M}{\mathcal{M}}
\renewcommand{\(}{\left(}
\renewcommand{\)}{\right)}
\renewcommand{\[}{\left[}
\renewcommand{\]}{\right]}
\renewcommand{\geq}{\geqslant}
\renewcommand{\leq}{\leqslant}
\renewcommand{\epsilon}{\varepsilon}
\renewcommand{\cdots}{\dots}
\theoremstyle{plain}
\newtheorem{theorem}{Theorem}
\newtheorem{lemma}{Lemma}
\newtheorem{proposition}{Proposition}
\theoremstyle{definition}
\newtheorem{definition}{Definition}
\newtheorem{example}{Example}
\newtheorem{assumption}{Assumption}
\theoremstyle{remark}
\newtheorem{remark}{Remark}
\newcommand{\cet}{\begin{center}}
\newcommand{\ecet}{\end{center}}
\begin{document}
	
\title{PSAHARA Utility Family: Modeling Non-monotone Risk Aversion and Convex Compensation in Incomplete Markets}

\author{
Yang Liu
\thanks{
Email: \texttt{yangliu16@cuhk.edu.cn}
}
\quad\quad\quad\quad\quad\quad\quad\quad 
Zhenyu Shen
\thanks{Corresponding Author. 
Email: \texttt{zhenyushen@link.cuhk.edu.cn}
}

{\small 
School of Science and Engineering, The Chinese University of Hong Kong, Shenzhen
, China. 
}

}
\date{}

\maketitle

\begin{abstract}

In hedge funds, convex compensation schemes are adopted to stimulate a high-profit performance for portfolio managers. In economics, non-monotone risk aversion is proposed to argue that individuals may not be risk-averse when the wealth level is low. Combining these two ingredients, we study the optimal control strategy of the manager in incomplete markets. Generally, we propose a wide family of utility functions, the piecewise symmetric asymptotic hyperbolic absolute risk aversion (PSAHARA) utility, to model the two ingredients, containing both non-concavity and non-differentiability as some abnormalities. Technically, we propose an additional assumption and prove concavification techniques of non--concave utility functions with a left unbounded domain in incomplete markets. Next, we derive an explicit optimal control for the family of PSAHARA utilities. This control is expressed into a unified four-term structure, featuring the asymptotic Merton term.
Furthermore, we provide a detailed asymptotic analysis and numerical illustration of the optimal portfolio. 
We obtain several key insights, including that 
the convex compensation still induces a great risk-taking behavior in the case that the preference is modeled by SAHARA utility. Finally, we conduct a real-data analysis of the U.S. stock market under the above model and conclude that the PSAHARA portfolio is very risk-seeking and leads to a high return and a high volatility.

 \vskip 2pt
	\noindent
	{\bf Keywords:} Utility theory, Piecewise symmetric asymptotic hyperbolic absolute risk aversion (PSAHARA) utility, Concavification technique, Asymptotic analysis, Empirical financial analysis
\end{abstract}

\section{Introduction}
Compensation incentive schemes are commonly adopted to share profits between the portfolio manager and the investors in hedge funds. A typical setting in the financial industry is the ``2--20'' scheme---$2\%$ management fee of the fund value and $20\%$ performance fee of the excess profit, where the latter usually takes the form of a call option; see \cite{C2000}. Such a compensation scheme gives the manager a strong incentive to chase a good performance. Mathematically, this scheme is a piecewise linear convex function and hence is referred to as a convex compensation scheme (formally in Eq. \eqref{eq:incentive contract}). Multiple studies have explored the impact of incentive options on the investment strategies of fund managers; see \cite{BKP2004}, \cite{HJ2007}, 
\cite{BS2014}, \cite{HK2018} and \cite{LL2020}.
In these studies, fund managers are typically assumed to have constant relative risk aversion (CRRA) or hyperbolic absolute risk aversion (HARA) types of utilities or S-shaped utilities with piecewise power function variants. 

However, the CRRA or HARA utilities induce a monotone absolute risk-aversion function (ARA$(\cdot)$, formally given in Definition \ref{def:SAHARA}), which means that the individual will be very risk-averse below a low wealth level. But intuitively, people are relatively indifferent about a slight loss below some wealth level, implying that the individual may be less risk-averse below this threshold. As a result, the absolute risk-aversion function may not be monotone, and 
a new utility function, the so-called symmetric asymptotic hyperbolic absolute risk aversion (SAHARA) utility, is proposed in \cite{CPV2011}. 
This utility is explicit and well exhibits the above feature.
Further, the utility is defined on the whole real line (the wealth could be arbitrarily negative). Together with a CARA utility (Constant Absolute Risk Aversion; i.e., exponential functions), the SAHARA utility acts as an alternative to the CRRA and HARA utilities (i.e., power functions) to help characterize the preference on the arbitrary wealth level. The SAHARA utility is further adopted and studied in, e.g., \cite{CNS2021}, \cite{SZ2021} and \cite{CLH2023}.

If the portfolio manager has a SAHARA preference, the actual utility is the composition of the SAHARA preference and the convex compensation scheme. This actual utility may not be of SAHARA on the whole domain and become complicated. To model this actual utility and further enhance the generality, we introduce a wide family of piecewise SAHARA (PSAHARA) utility functions. Roughly speaking, a utility function in the PSAHARA family satisfies that there is a partition of the real line making the utility SAHARA on each interval (while the different sections allow different parameter settings). Starting from this point, our contribution is fourfold.

First, we design a unified parameterization and introduce the PSAHARA utility family (Definition \ref{def:PSAHARA utility}). Current studies on continuous-time portfolio selection adopt various criterion to evaluate risk and return, e.g., the risk measure in \cite{HJZ2015}, the S-shape utility in \cite{DZ2020}, and the performance ratio in \cite{GLX2023}. Here we capture the individual preference by the PSAHARA utility family, which distinguishes from the commonly used CRRA, CARA, and HARA utilities by 
the non-monotone risk aversion characteristics inherited from the SAHARA utility, allowing for studies on more flexible wealth levels and more complicated risk-taking behaviors. In the PSAHARA family, we emphasize the role of various threshold wealth levels and argue that the individual becomes less risk-averse below each threshold. By introducing some properties on the transformation-invariance of the PSAHARA utility, this wide family incorporates the above objective (composition of the SAHARA preference and the piecewise linear convex compensation) in hedge fund management as a motivating example (Propositions \ref{prop:generality}--\ref{prop:enve}). The utility family may promote other application contexts of the utility theory. 

Second, we formulate the continuous-time portfolio selection problem in incomplete Black--Scholes markets and obtain an explicit optimal control for the general PSAHARA utility (Theorems \ref{thm:optimal wealth}--\ref{optimal portfolio}). As suggested above, the PSAHARA utility may not be of SAHARA on the whole domain, and may not necessarily be concave or differentiable. This creates difficulties in solving an explicit formula of the portfolio. In the proofs, we explain these issues in details.
Technically, in Theorem \ref{thm:optimal wealth} and Proposition \ref{prop:developed optimal condition}, we apply and extend the martingale and duality method (\cite{KLSX1991}) and rigorously discuss the concavification techniques (\cite{C2000}) in the incomplete markets to obtain the optimal portfolio. To rigorously apply the martingale and duality method, we propose an additional assumption (Assumption \ref{assp:assumption 1}), which guarantees the square integrability of the portfolio control processes; see Section \ref{sec:technical discussions} for details.
The optimal control is expressed in a ``partial" feedback form, including parts of the optimal wealth and a unique Lagrange multiplier ($y^*$ in later contexts). We name the four components of the optimal portfolio by their mathematical and economic implications: the asymptotic Merton term, the risk adjustment term (due to the scale parameter in PSAHARA utility), the first-order risk aversion term (due to non-differentiability), and the loss aversion term (due to threshold wealth levels). 

Third, we conduct a comprehensive asymptotic analysis of the optimal control under the general PSAHARA utility family to study the limiting behaviors of the optimal wealth and portfolio (Theorem \ref{thm:asymptotic analysis}). The asymptotic approach is inspired by \cite{LL2024}, but the implementation of the PSAHARA utility family is completely different and technical. 
In addition, we numerically visualize the optimal control dynamics under some examples of PSAHARA utilities. The asymptotic and numerical studies demonstrate that the optimal risky investment percentage tends to the well-known Merton ratio as the wealth grows to infinity; we summarize it as the asymptotic Merton term. The risky investment percentage tends to the negative Merton ratio as the wealth decreases to negative infinity, indicating the fact that people tend to be less risk-averse when their wealth levels fall negatively low. Further, the risky investment percentage tends to infinity as the wealth level tends to zero, suggesting that the manager is risk-seeking at a low wealth level (0).

Fourth, we conduct an empirical study on the motivating example in hedge fund management. We first give the corresponding optimal portfolio (Theorem \ref{thm:incentive portfolio}). We apply this portfolio strategy with the real data of the U.S. stock market, using different estimation methods of the volatility process $\{\pmb{\sigma}_t \}_{0\leq t \leq T}$. 
We analyze the Sharpe ratio of investment 
and find that there exists a ``gambling'' behavior on the linear segments of the composed utility. This means that even if the manager has a SAHARA utility, the convex compensation indeed induces a great risk-taking behavior (which coincides with the classic result of \cite{C2000}). We further find a two-peak pattern of the corresponding Sharpe ratio, meaning that the PSAHARA portfolio leads to a high return and a high volatility.


We compare our novelties with the literature. In the study of incomplete markets, \cite{P1987} pioneers the study on optimal investment and adopts the martingale analysis approach. This approach is developed by \cite{HP1991} and \cite{KLSX1991}. These works mainly focus on the existence of optimal investment strategies, while we technically 
get rid of the assumption of the left bounded domain of non--concave utility functions in Proposition \ref{prop:developed optimal condition} and provide an explicit optimal portfolio in incomplete markets in Theorem \ref{optimal portfolio}. 
In addition, 
\cite{LLMV2024} proposes a family of utility functions, ``piecewise HARA (PHARA) utility", which means that the utility is of HARA on each part of the domain. Our PSAHARA utility family shares a similar logic of generalization, but the SAHARA utility has a more complicated expression and the portfolio is more subtle to obtain. 
Furthermore, as the SAHARA utility can (asymptotically) reduce to a HARA utility, our PSAHARA utility contains the big family of the PHARA utility. Technically, the corresponding optimal control formula in Theorem \ref{optimal portfolio} can exactly reduce to the optimal control of the PHARA portfolio in Theorem 1 of \cite{LLMV2024}. This makes the portfolio formula consistent and unified to implement in future research and financial practice. 

This paper is structured as follows. In Section \ref{sec:motivation}, we introduce our motivating example. We give the definitions of the SAHARA utility and incentive contracts. The model settings are introduced in Section \ref{sec:model setting}. In Section \ref{sec:optimal portfolio}, we present the explicit formula of the optimal portfolio, which is the main theorem of our study. Section \ref{sec:technical discussions} shows our main technical contribution. We conduct the asymptotic and numerical analysis in Section \ref{sec:analysis}. Finally, we revisit our motivating example and conduct empirical studies on the corresponding optimal portfolio in Section \ref{sec:application}. All the proofs are included in Appendix \ref{proofs}. Methods of the empirical study are in Appendix \ref{appendix:empirical}.

\section{Motivating Example in Hedge Funds}
\label{sec:motivation}
In this section, we propose a motivating example and give an analytical model of non-monotone risk aversion and convex compensation.  
The non-monotone risk aversion is depicted by the SAHARA utility family (\cite{CPV2011}). It characterizes complex risk attitudes and show distinct features compared to the widely studied CRRA, CARA and HARA utilities.
\begin{definition}[SAHARA utility function]\label{def:SAHARA}
A utility function $U$ with the domain $\R$ is of the SAHARA family if its absolute risk aversion function $\text{ARA}\left( x \right) = - U^{''}\(x \) / U^{'} \( x \)$ is defined on $\R$ and satisfies
\begin{equation}\label{ARA}
\text{ARA}\( x\) = \frac{\alpha}{\sqrt{\beta^2 + \(x-d\)^2}} \geq 0,\quad x \in \R,
\end{equation}
for given $\alpha \geq 0$ (the \textit{risk aversion parameter}), $\beta > 0$ (the \textit{scale parameter}), and $d \in \R$ (the \textit{threshold wealth}). 
To give the explicit expression of such a utility, 
we derive that there exist constants $c_1 \in \R$ and $c_2 > 0$ such that $U\( x \) = c_1 + c_2 \hat{U}\( x \)$ with
\begin{equation}\label{basic_U}
\hat{U}(x;\alpha,\beta,d) = \left\{
\begin{aligned}
&- \frac{1}{\alpha^2 - 1} \( \(x-d\) + \sqrt{\beta^2 + \(x-d\)^2}\)^{-\alpha} \(\(x-d\)+ \alpha \sqrt{\beta^2 + \(x-d\)^2}\),\quad \alpha \neq 1;\\
&\frac{1}{2} \log \( \(x-d\) + \sqrt{\beta^2 + \(x-d\)^2} \) + \frac{1}{2} \beta^{-2} \(x-d\) \( \sqrt{\beta^2 + \(x-d\)^2} -\(x-d\) \),\quad \alpha = 1,
\end{aligned}
\right.
\end{equation}
where the domain is $\R$ in both cases. 
\end{definition}
\begin{remark}
As a detail in the derivation above, we first solve from Eq. \eqref{ARA} that (up to some constants):
\begin{equation}\label{eq:derivative}
\hat{U}'\(x\) = \(x + \sqrt{\beta^2 + x^2} \)^{-\alpha}, \;\; x \in \R,
\end{equation}
and then obtain $\hat{U}$ in Eq. \eqref{basic_U}.
\end{remark}
\begin{remark}\label{rmk:SAHARA and HARA}
In the above definition, we let $\beta \neq 0$, but actually we can include the case $\beta = 0$. In the latter case, we have
\begin{equation}
\text{ARA}\(x\) = \frac{\alpha}{| x - d |}, \;\; x \neq d,
\end{equation}
which reduces to a HARA utility on the interval of $(d,\infty)$. For better analytical tractability, in the case $\beta = 0$, we let $x \in (d,\infty)$ be the domain of the utility function; see also Assumption \ref{assp:assumption 2} and Remark \ref{rmk:PHARA}. 
\end{remark}
Current studies on the SAHARA utility (\cite{CPV2011} and others) usually assume $d = 0$ for notation simplicity. However, we emphasize the role of the parameter $d$ as it represents different threshold wealth levels; see later Definition \ref{def:PSAHARA utility}. As a result, we can have more general properties of the SAHARA family.

For the fund management model, we suppose that the manager receives a constant proportion of the terminal fund value as the management fee. Moreover, he/she is granted a call option with a fixed strike price that he/she can choose whether or not to exercise at maturity. Hence, the terminal wealth of the manager under the compensation scheme takes the form
\begin{equation}\label{eq:incentive contract}
\Theta \(x\) = w \( x - B_T \)^+ + v x,\quad x \in \R,
\end{equation}
where $B_T$ is the discounted benchmark level, $w > 0$ denotes the number of options and $0 < v < 1$ is the rate of management fee. In practice, we usually have that $0 < v < w < 1$ since $w$ represents the incentive compensation while $v$ represents the regular management fee. This is similar to the model in \cite{HJ2007} and \cite{CHN2019} but excludes the lower-bound liquidation boundary. Let $X_T$ denote the terminal fund value. Then the manager's wealth is $\Theta(X_T)$. We assume that the manager has a SAHARA utility $\hat{U}$. Hence, the utility function of the manager with respect to the terminal value of the fund under the compensation scheme is given by
\begin{equation}\label{eq:simple utility}
U\( X_T \) := \hat{U} \circ \Theta\(X_T\) = \left\{\
\begin{aligned}
&\hat{U} \(vX_T\), && X_T \leq B_T;\\
&\hat{U} \(w\(X_T-B_T\) + vX_T\), && X_T > B_T,
\end{aligned}
\right.
\end{equation}
where $\hat{U}$ is the function defined in \eqref{basic_U}.
\begin{remark}
It is a regular approach to set a lower bound or a liquidation boundary ($X_T \geq 0$ a.s.) in the literature; see, e.g., \cite{C2000} and \cite{BS2014} and later Assumption \ref{assp:assumption 2}. They assume the \textit{Inada condition} on their utility function $U$. That is, $U$ is defined on $(0,\infty)$ and
\begin{equation}\label{eq:inada condition1}
\lim_{x \rightarrow 0} U'\(x\) = \infty,\quad\lim_{x \rightarrow \infty} U'\(x\) = 0,\quad \lim_{x \rightarrow 0} U\(x\) = -\infty,
\quad \lim_{x \rightarrow \infty} U\(x\) = \infty.
\end{equation}
By shifting on the $x$-axis, Eq. \eqref{eq:inada condition1} can be made valid for any real number. 
This assumption, though providing analytical tractability, excludes the cases where the fund value tends extremely low. Here, we do not set such a lower bound. Our setting includes the whole real line as the domain of the wealth level. We can hence deal with more comprehensive scenarios.
\end{remark}

The explicit form of Eq. \eqref{eq:simple utility} is
{\small
\begin{equation}\label{eq:incentive utility}
U\(X_T\) = \left\{
\begin{aligned}
& -\frac{v^{1-\alpha}}{\alpha^2-1}  \[ \(X_T - \frac{d}{v}\) + \sqrt{ \frac{\beta^2}{v^2}+\(X_T - \frac{d}{v}\)^2 } \]^{-\alpha} \[ \(X_T-\frac{d}{v}\) + \alpha \sqrt{ \frac{\beta^2}{v^2} + \(X_T - \frac{d}{v}\)^2 } \], && X_T \leq B_T;\\
& -\frac{\(w+v\)^{1-\alpha}}{\alpha^2-1}  \[ \(X_T - \frac{wB_T+d}{w+v}\) + \sqrt{ \frac{\beta^2}{\(w+v\)^2}+\(X_T - \frac{wB_T+d}{w+v}\)^2 } \]^{-\alpha}\\
&\times \[ \(X_T - \frac{wB_T+d}{w+v}\)+\alpha \sqrt{\frac{\beta^2}{\(w+v\)^2}+\(X_T-\frac{wB_T+d}{w+v}\)^2} \], && X_T > B_T,
\end{aligned}
\right.
\end{equation}}which belongs to the PSAHARA family introduced later in Section \ref{sec:model setting}. This is further illustrated in Figure \ref{fig:incentive}.
\begin{figure}[h!]
\centering
\includegraphics[width=0.3\linewidth]{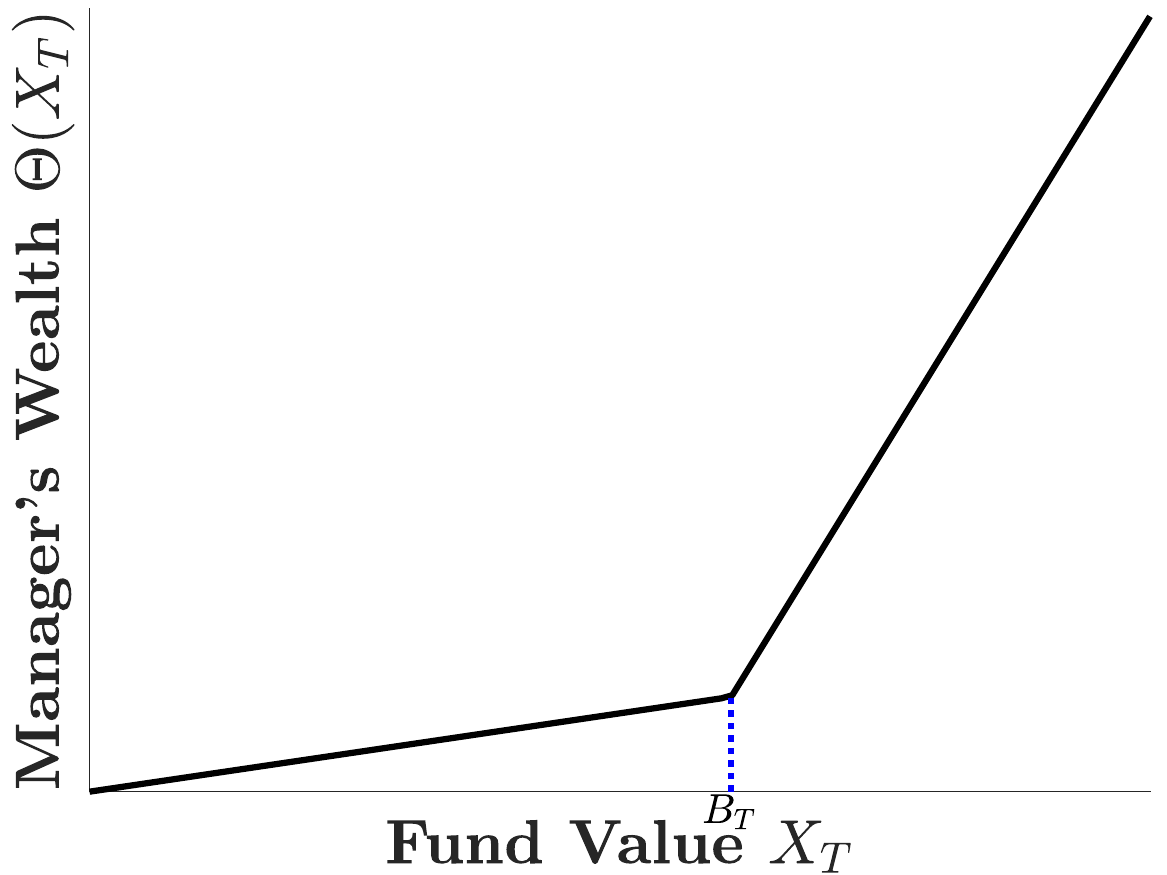} \quad
\includegraphics[width=0.3\linewidth]{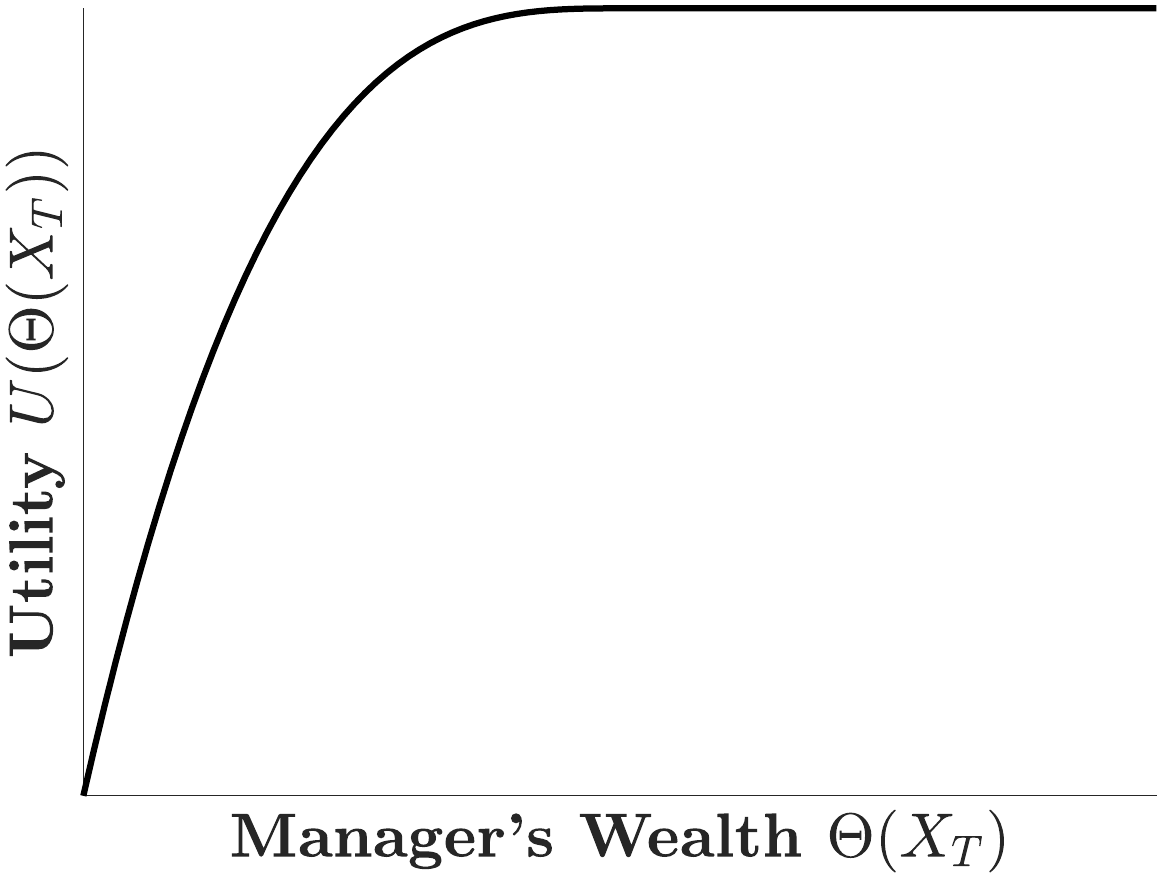} \quad
\includegraphics[width=0.3\linewidth]{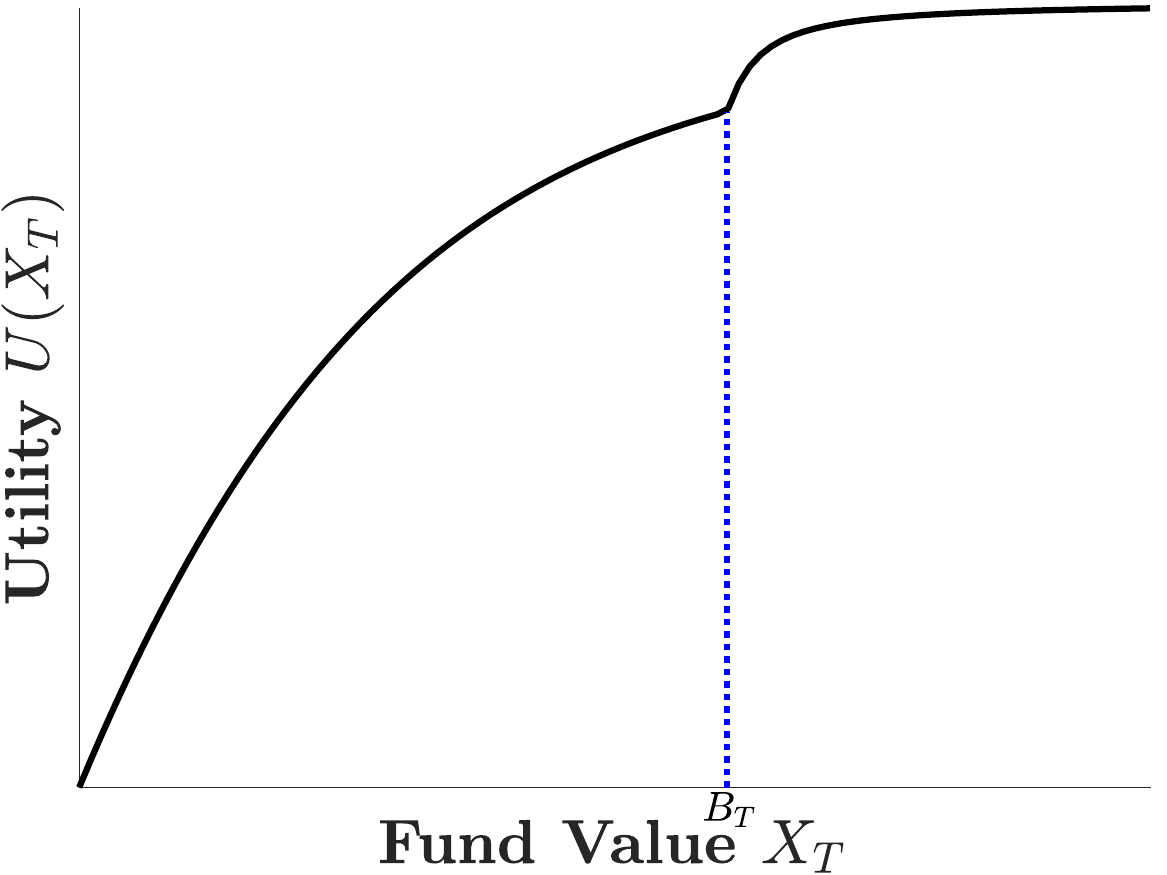}
\caption{The left subfigure is the convex compensation scheme \eqref{eq:incentive contract}. The middle subfigure is the manager's SAHARA utility $\hat{U}$ shown in \eqref{basic_U}. The right subfigure is the composed utility \eqref{eq:incentive utility}. In the parametrization, $\alpha=2,\beta=0.1,d=0,B_T=e^{0.05}$.}
\label{fig:incentive}
\end{figure}

We can further delve into the risk aversion behaviors of the manager in Eq. \eqref{eq:incentive utility}:
{\small
\begin{equation}
\text{ARA} \(X_T\) = \left\{
\begin{aligned}
&\frac{v\alpha}{\sqrt{\beta^2 + \(X_T -d\)^2}}, && X_T \leq B_T;\\
&\frac{\(w+v\)\alpha}{\sqrt{\beta^2 + \(X_T-wB_T-d\)^2}}, && X_T > B_T.
\end{aligned}
\right.
\end{equation}}We see that on both parts of the domain, the manager's risk aversion attitude towards the fund value is decreased (since in most cases $0 < v < w < 1$). Also, the underlying threshold level of the utility is increased by the payoff of the option above the strike price of the incentive. We first set up the market in the following and study the manager's investment behaviors. This motivating example will be revisited in Section \ref{sec:application}.

\section{Model Setting}\label{sec:model setting}
\subsection{Market Model}\label{sec:market model}

We consider a multi-dimensional Black--Scholes model. To solve the explicit solution, all the market parameters are deterministic processes. There are a risk-free asset and risky assets in the market. The risk-free asset has a deterministic return rate and no volatility, while the risky assets have higher expected return rates and positive volatilities. We denote by the filtered probability space $ \left( \Omega, \F_T, \{ \F_t \}_{0 \leq t \leq T}, \p \right)$ the financial market. The filtration $\{ \F _t \}_{0 \leq t \leq T}$ is the one generated by a $q$-dimensional standard independent Brownian motion $\{ \mathbf{W}_t \}_{0 \leq t \leq T} = \{\(W_{1,t}, \ldots, W_{q,t}\)^\intercal \}_{0 \leq t \leq T}$ and further augmented by all $\p$-null sets. Let the deterministic process $\{ r_t \}_{0\leq t \leq T}$ denote the risk-free rate. The risk-free asset $\{S_{0,t}\}_{0 \leq t \leq T}$ satisfies
\begin{equation}\label{Bond}
\d S_{0,t} = r_t S_{0,t} \d t,\quad 0 \leq t \leq T.
\end{equation}
For the $m$ risky assets, we denote the return rate by a vector $\pmb{\mu}_t := \(\mu_{1,t},\ldots, \mu_{m,t} \)^\intercal $ and the volatility by an $m \times q$ matrix $\pmb{\sigma}_t$. We assume that $\pmb{\sigma}_t \pmb{\sigma}_t^\intercal$ is positive definite for all $t \in [0,T]$, and hence is invertible.

In Black--Scholes models, market completeness means that every risky asset in the market can be replicated by a self-financing trading strategy, which is equivalent to the assumption that every risk can be hedged (i.e., $m=q$). As a study on the incomplete market, we consider that the risks may not be totally hedged, i.e., $m \leq q$. We assume that $\mu_{i,t} > r_t$ for $i = 1, \ldots, m, \; t \in \[0, T\]$. The evolution of the $i$-th risky asset follows the geometric Brownian motion which satisfies the stochastic differential equation:
\begin{equation}\label{Stock}
\d S_{i,t} = \mu_{i,t} S_{i,t} \d t + S_{i,t} \pmb{\sigma}_{i,t}^\intercal \d \mathbf{W}_t, \quad i = 1, \ldots, m,
\end{equation}
where $\pmb{\sigma}_{i,t}$ denotes the $i$th row of $\pmb{\sigma}_t$. Letting $\mathbf{1}_m := (1, \ldots, 1)^\intercal \in \R^m$, we define
\begin{equation}\label{eq:chosen theta}
\pmb{\theta}_t := \pmb{\sigma}_t^\intercal \( \pmb{\sigma}_t \pmb{\sigma}_t^\intercal \)^{-1} \( \pmb{\mu}_t - r_t \mathbf{1}_m \),
\end{equation}
which represents the vector of the market price of risk. 
\begin{remark}
In the traditional incomplete Black--Scholes model, the market price of risk $\{\Tilde{\pmb{\theta}}_t \}_{0\leq t\leq T}$ can be any solution of the linear system:
\begin{equation}\label{eq:price of risk}
\pmb{\sigma}_t \Tilde{\pmb{\theta}}_t = \pmb{\mu}_t - r_t \mathbf{1}_m.
\end{equation}
Hence, if $m < q$, $\Tilde{\pmb{\theta}}_t$ is not unique. We choose Eq. \eqref{eq:chosen theta}, one of the solutions of Eq. \eqref{eq:price of risk}, to represent the price of risk. Later in Section \ref{sec:technical discussions}, we will show that this simplification does not influence the validity of our main theorems.  
\end{remark}
We proceed to give a standing assumption on market coefficients. The following is supposed to hold in the rest of the paper.
\begin{assumption}
The deterministic processes $\{\pmb{\mu}_t \}_{0\leq t\leq T}$, $\{r_t\}_{0\leq t\leq T}$ and $\{\Tilde{\pmb{\theta}}_t\}_{0\leq t\leq T}$ in \eqref{eq:price of risk} satisfy
\begin{equation}
    \sup_{t \in [0,T]} \| \pmb{\mu}_t \|_2 < \infty,\quad \sup_{t \in [0,T]} r_t  < \infty,\quad \sup_{t \in [0,T]} \| \Tilde{\pmb{\theta}}_t \|_2 < \infty,
\end{equation}
and
\begin{equation}\label{eq:Novikov condition}
    \exp \left\{ \frac12 \int_0^T \| \pmb{\theta}_t \|_2^2 \, \d t \right\} < \infty,
\end{equation}
where $\| \pmb{\theta}_t \|_2 := \(\sum_{i = 1}^q \theta_{i,t}^2\)^{\frac12} $ represents the $L^2$ norm of a vector.
Moreover, for the matrix--valued process $\{\pmb{\sigma}_t \}_{0\leq t\leq T}$, let $\lambda_t^{\max} $ denote the largest eigenvalue of $\pmb{\sigma}_t\pmb{\sigma}_t^\intercal$. Then the process $\{\lambda_t^{\max}\}_{0\leq t\leq T} $ satisfies
\begin{equation}
    \sup_{t\in [0,T]} \lambda_t^{\max}< \infty.
\end{equation}
\end{assumption}
The assumption above is reasonable since the coefficients are deterministic processes. 
Next, we define one pricing kernel process $\{ \xi_t \}_{0 \leq t \leq T}$ as follows:
\begin{equation}\label{eq:pricing kernel}
\xi_t := \exp \left\{ - \int_0^t \( r_s + \frac12 \| \pmb{\theta}_s \|_2^2 \)\, \d s - \int_0^t \pmb{\theta}_s^\intercal \, \d \mathbf{W}_s\right\}, \quad 0 \leq t \leq T.
\end{equation}
We denote by $\{\pi_{i,t}\}_{0 \leq t \leq T}$ the amount of money invested in $i$th risky asset $S_{i}$ at time $t$. The wealth process $\{X_t\}_{0 \leq t \leq T}$ is uniquely determined by the investment process $\{\pmb{\pi}_t = \(\pi_{1,t}, \ldots, \pi_{m,t} \) \}_ {0 \leq t \leq T}$ and an initial value $x_0 \in \R$:
\begin{equation}\label{wealthprocess}
\d X_t = \( r_t X_t + \pmb{\pi}_t^\intercal \(  \pmb{\mu}_t - r_t \mathbf{1}_m\)\) \d t + \pmb{\pi}_t^\intercal \pmb{\sigma}_t \d \mathbf{W}_t, \quad X_0 = x_0.
\end{equation}
To define the admissible set of controls, we introduce the following assumptions on $\{\pmb{\pi}_t \}_{0\leq t\leq T}$. We will suppose either of Assumptions \ref{assp:assumption 1} or \ref{assp:assumption 2} holds in the rest of the paper.
\begin{assumption}\label{assp:assumption 1}
There exists a uniform $\epsilon > 0$ such that
\begin{equation}
\E \[\int_0^T \|\pmb{\pi}_t\|_2^{2+\epsilon} \,\d t\] < \infty \quad \text{for all}\; \{\pmb{\pi}_t \}_{0\leq t\leq T}.
\end{equation}
\end{assumption}
\begin{assumption}\label{assp:assumption 2}
There exists a uniform constant $C \in \R$ such that the wealth process $\{X_t\}_{0\leq t\leq T}$ controlled by any $\{\pmb{\pi}_t \}_{0\leq t\leq T}$ satisfies
\begin{equation}
    X_t \geq C\; \text{a.s.},\quad 0\leq t\leq T.
\end{equation}
\end{assumption}
\begin{remark}
As we will show later in Section \ref{sec:technical discussions}, either of the two assumptions guarantees the validity of martingale and duality method. Assumption \ref{assp:assumption 2} is widely adopted in current studies and reflected in utility functions; see, e.g., \cite{KLS1987}, where $C$ is taken as $0$. We present Assumption \ref{assp:assumption 1} to show a rather explicit condition on $\{\pmb{\pi}_t\}_{0\leq t\leq T}$ for the martingale and duality method to work.
\end{remark}
We will discuss later in Section \ref{sec:technical discussions} about the details of these two assumptions, respectively.
Now, we define the admissible set $\mathcal{V}$ of controls. 
\begin{equation}
\begin{aligned}
\mathcal{V} := \left\{\pmb{\pi} : [0,T] \times \Omega \rightarrow \R^m \right. | & \pmb{\pi} \;\text{is}\; \{ \mathcal{F}_t \}_{0 \leq t \leq T}-\text{progressively measurable and satisfies} \\
& \left. \text{either of Assumption \ref{assp:assumption 1} and Assumption \ref{assp:assumption 2} } \right\}.
\end{aligned}
\end{equation}
A wealth process is called admissible if it is controlled by an admissible control.
The decision maker conducts portfolio selection by solving the expected utility maximization problem:
\begin{equation}\label{eq:main problem}
    \max_{\pmb{\pi} \in \mathcal{V}} \E \[ U \( X_T\)\],
\end{equation}
where $U$ is the composed utility function. 

\subsection{PSAHARA Utility}\label{sec:PSAHARA utility}
Next, we define the piecewise symmetric asymptotic hyperbolic absolute risk aversion (PSAHARA) utility function. Namely, it can be viewed as the SAHARA utility function on each part of its domain.
\begin{definition}[PSAHARA utility]\label{def:PSAHARA utility}
Define the function $\Tilde{U} : \R \rightarrow \R$ (with risk aversion parameter $\alpha \geq 0$, scale parameter $\beta > 0$, threshold level $d \in \R$, utility value $u \in \R$, slope $\gamma > 0$) as 
\begin{equation}\label{U_tilde}
    \Tilde{U} \( x; \alpha, \beta, d, \gamma, u \) := \gamma \hat{U}\( x; \alpha, \beta, d \) + u.
\end{equation}
A function $U: \R \rightarrow \R$ is a piecewise SAHARA utilitiy if and only if there exists a partition $\{ a_k \}_{k=0}^{n+1}$ and a family of parameter tuples $\{ \( \alpha_k, \beta_k, d_k,u_k \) \}_{k=0}^{n}$ such that
\begin{enumerate}
\item[(i)] $n \geq 0, a_1 < a_2 < \cdots < a_n ; a_1, \ldots, a_n \in \R , a_0 = -\infty, a_{n+1} = \infty;$
\item[(ii)] $U$ is increasing and continuous on $\R$;
\item[(iii)]
\begin{enumerate}
    \item[(a)] If $n = 0$, then $U \(x\) = \gamma \hat{U}\( x; \alpha, \beta, d\) + u$ for any $x \in \R$;
    \item[(b)] If $n \geq 1$, for $k = 0, U\(x\) = \Tilde{U}\(x; \alpha_0, \beta_0, d_0, \gamma_0, u_0\)$ for any $x \in (a_0, a_1)$; for any $ k \in \{ 1,2,\ldots, n\}, U\(x\) = \Tilde{U}\( x; \alpha_k, \beta_k, d_k, \gamma_k, u_k \) $ for any $x \in \( a_k, a_{k+1} \)$.
\end{enumerate}
\end{enumerate}
\end{definition}
For simplicity, we use the notation: $\gamma_k^+ = U'(a_k^+), \gamma_k^- = U'(a_k^-)$, where $\gamma_{n+1}^- = 0$ in the following context.
\begin{remark}
If $\beta_k = 0$ for $k = {0, \ldots, n}$, the PSAHARA utility reduces to the PHARA utility defined in \cite{LLMV2024}.
\end{remark}
Clearly, the PSAHARA utility function can be non-concave. Hence, we introduce the concept of the concave envelope. 
\begin{definition}[Concave envelope]
Let $\mathcal{D} \subseteq \R$ be a convex set. Denote a continuous function by $U: \mathcal{D} \rightarrow \R$, where the domain of $U$ is denoted by dom $U = \D$. The concave envelope of $U$ (denoted by $U^{**}$) is defined as the smallest continuous concave function larger than $U$. That is, for $x \in \D$,
\begin{equation}\label{envelope_def}
    U^{**}\(x\) := \inf \{h\(x\):\text{ $h$ maps $\D$ to $\R$},\text{$h$ is a concave and continuous function on $\D$ and $h \geq U$}\}.
\end{equation}
\end{definition}

The concave envelope plays an important role in the portfolio choice problem with non-concave utilities. We rigorously show in the proof of Theorem \ref{thm:optimal wealth} that if $\xi_T$ has a continuous distribution, Problem \eqref{eq:main problem} has the same optimal solution as the following problem with the utility $U$ replaced by the concave envelope $U^{**}$:
\begin{equation}\label{problem_enve}
    \max_{\pmb{\pi} \in \mathcal{V}} \E \[ U^{**} \(X_T\)\]
\end{equation}
We give two propositions in the following to show the generality of the PSAHARA utility family. 
\begin{proposition}\label{prop:generality}
If $U\(\cdot\)$ is a PSAHARA utility function, and $h\(\cdot\)$ is an increasing continuous piecewise linear function, then $U \circ h\(\cdot\)$ is also a PSAHARA utility function. 
\end{proposition}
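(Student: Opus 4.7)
The plan is to reduce the problem to a routine verification on each subinterval by establishing a single algebraic identity: composing a SAHARA building block $\hat U(\cdot;\alpha,\beta,d)$ with an increasing affine map $y=px+q$ (where $p>0$) produces another SAHARA building block with transformed parameters, up to an additive constant. Once this is in hand, one only needs to refine the partition of $U$ using the breakpoints of $h$ to realize $U\circ h$ as a PSAHARA utility.

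First, I would enumerate the data: let $U$ have partition points $a_1<\dots<a_n$ with parameters $(\alpha_k,\beta_k,d_k,\gamma_k,u_k)$ on each interval $(a_k,a_{k+1})$, and let $h$ be piecewise linear with breakpoints $b_1<\dots<b_m$, with $h(x)=p_j x+q_j$ on each maximal piece, where $p_j>0$ since $h$ is (strictly) increasing. Define the partition of $U\circ h$ by combining the $b_j$'s with the preimages $h^{-1}(a_k)$; since $h$ is continuous and strictly increasing, each $h^{-1}(a_k)$ is a single point, so the resulting union is finite and totally ordered, yielding a valid partition $c_1<\dots<c_N$. On each resulting subinterval $(c_\ell,c_{\ell+1})$ both $U$ and $h$ are governed by a single parameter tuple, so $U\circ h$ admits a clean closed form.

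The computational core is the identity
\begin{equation*}
\hat U(px+q;\alpha,\beta,d) \;=\; p^{\,1-\alpha}\,\hat U\!\left(x;\alpha,\tfrac{\beta}{p},\tfrac{d-q}{p}\right) + C_{\alpha,p},
\end{equation*}
where $C_{\alpha,p}=0$ for $\alpha\neq 1$ and $C_{1,p}=\tfrac12\log p$. This follows directly from Eq.~\eqref{basic_U} by setting $d'=(d-q)/p$, $\beta'=\beta/p$ and factoring $p$ out of the expressions $(y-d)+\sqrt{\beta^2+(y-d)^2}$ and $(y-d)+\alpha\sqrt{\beta^2+(y-d)^2}$ after substituting $y=px+q$; the sign condition $p>0$ is exactly what justifies $\sqrt{\beta^2+p^2(x-d')^2}=p\sqrt{\beta'^2+(x-d')^2}$. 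Plugging this identity into $U(h(x))=\gamma_k \hat U(p_jx+q_j;\alpha_k,\beta_k,d_k)+u_k$ on each subinterval produces exactly the template $\Tilde U(x;\alpha',\beta',d',\gamma',u')$ of Eq.~\eqref{U_tilde}, with
\begin{equation*}
\alpha'=\alpha_k,\quad \beta'=\tfrac{\beta_k}{p_j}>0,\quad d'=\tfrac{d_k-q_j}{p_j},\quad \gamma'=\gamma_k p_j^{\,1-\alpha_k}>0,\quad u'=u_k+\gamma_k C_{\alpha_k,p_j}.
\end{equation*}

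To finish, I would verify the remaining items of Definition~\ref{def:PSAHARA utility}: increasingness and continuity of $U\circ h$ on $\R$ are immediate from the corresponding properties of $U$ and $h$; the SAHARA form just derived supplies condition (iii) on each piece, and the partition constructed above supplies condition (i). The main obstacle is essentially bookkeeping---carefully handling the $\alpha_k=1$ case (where the additive $\tfrac12\log p_j$ contribution is absorbed into $u'$) and the degenerate setting $\beta_k=0$ from Remark~\ref{rmk:SAHARA and HARA}, where the domain restriction $x>d_k$ has to be transported through $h^{-1}$ to maintain a well-posed partition. Neither introduces any conceptual difficulty, so the proof is conceptually just the affine-invariance identity above, dressed up with a merging of partitions.
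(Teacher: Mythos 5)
Your proposal is correct and follows essentially the same route as the paper: both rest on the affine-invariance identity $\hat U(px+q;\alpha,\beta,d)=p^{1-\alpha}\hat U\(x;\alpha,\beta/p,(d-q)/p\)$ (valid because $p>0$ lets one pull $p$ out of the square root), yielding the transformed tuple $\(\alpha,\beta/p,(d-q)/p,\gamma p^{1-\alpha},u\)$ on each subinterval where both $U$ and $h$ are governed by a single piece. Your write-up is in fact slightly more careful than the paper's, which leaves the partition refinement and the additive $\tfrac12\log p$ constant in the $\alpha=1$ case implicit.
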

\begin{proposition}\label{prop:enve}
If $U\(\cdot\)$ is a PSAHARA utility function, then $U^{**}\(\cdot\)$ is also a PSAHARA utility function. Further, even if $U\(\cdot\)$ is not a PSAHARA utility function, $U^{**}\(\cdot\)$ may be a PSAHARA utility function.
\end{proposition}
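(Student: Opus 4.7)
The plan rests on two structural observations about the PSAHARA family. First, on every interval $(a_k, a_{k+1})$ where $U$ agrees with $\tilde U(\cdot;\alpha_k,\beta_k,d_k,\gamma_k,u_k)$, Definition \ref{def:SAHARA} forces $\text{ARA}(x) \geq 0$, so $U'' \leq 0$ there and each SAHARA piece is automatically concave. Second, setting $\alpha = 0$ in Eq. \eqref{eq:derivative} yields $\hat U'(x) \equiv 1$, so an affine function is itself a SAHARA utility. This means any affine segment can be absorbed into the PSAHARA framework as a SAHARA piece with vanishing risk-aversion parameter, which is exactly what is needed to accommodate the linear ``bridges'' produced by concavification.

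For the first assertion, I would characterize $U^{**}$ via the standard tangent-line construction. Because each SAHARA piece is concave, the only source of non-concavity of $U$ is the set of breakpoints where $\gamma_k^- < \gamma_k^+$, i.e., where the left derivative jumps upward. I would identify the collection of maximal open intervals $(p_j, q_j)$ on which $U^{**} > U$ strictly; on each such interval, convex analysis tells us $U^{**}$ is the affine segment from $(p_j, U(p_j))$ to $(q_j, U(q_j))$, pinned by the one-sided tangency conditions $U'(p_j^-) = (U(q_j)-U(p_j))/(q_j - p_j) = U'(q_j^+)$ (with suitable variants when an endpoint is $\pm\infty$). Off the union of these bridging intervals, $U^{**} \equiv U$. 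Since each bridge is affine (hence SAHARA with $\alpha = 0$) and the retained portions of $U$ remain SAHARA, the common refinement of the two partitions gives a finite partition of $\R$ on which $U^{**}$ is piecewise SAHARA, monotone and continuous, so $U^{**}$ lies in the PSAHARA family.

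For the second assertion, I would exhibit a utility $U$ that is not PSAHARA but whose concave envelope is. Start with any PSAHARA $U_0$ whose concavification $U_0^{**}$ replaces a nontrivial section of the graph with an affine bridge over some interval $[p, q]$ (such a $U_0$ is easy to construct by stitching SAHARA pieces so that $\gamma_k^- < \gamma_k^+$ at one breakpoint). Now define $U$ to equal $U_0$ outside $[p, q]$, and on $[p, q]$ replace $U_0$ by any continuous monotone function $V$ with $V(p) = U_0(p)$, $V(q) = U_0(q)$, $V \leq U_0^{**}$ on $[p, q]$, and such that $V$ is not piecewise SAHARA on $[p, q]$ (for instance $V$ may be built from transcendental increments chosen to stay below the bridge). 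The resulting $U$ fails Definition \ref{def:PSAHARA utility}, but since $V \leq U_0^{**}$ and $V$ agrees with $U_0^{**}$ at the endpoints, the concave envelope of $U$ coincides with $U_0^{**}$, which is PSAHARA by the first part.

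The main technical hurdle is the bookkeeping for the tangency conditions in the envelope construction of the first part: a single bridge may span several of the original breakpoints, or its endpoints may fall strictly inside some $(a_k, a_{k+1})$, in which case the affected SAHARA piece must be further subdivided into a retained portion and a replaced affine portion. Handling this cleanly requires exploiting continuity and monotonicity of $U'$ within each SAHARA piece, together with a finite induction on the number of upward breakpoints, to guarantee that the resulting partition of $\R$ remains finite and thus meets the requirements of Definition \ref{def:PSAHARA utility}.
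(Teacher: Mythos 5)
Your proposal is correct and takes essentially the same route as the paper: the paper invokes Lemma \ref{lem:linear enve} (citing \cite{BC1994} and \cite{LLMV2024}) to assert that $U^{**}$ is linear on each component of $\{U^{**} \neq U\}$ and then notes that affine segments are SAHARA pieces with $\alpha = 0$, which is precisely your argument, except that you sketch a proof of that linearity lemma via the tangent-line construction instead of citing it. For the second assertion the paper simply points to the concrete counterexample in Example \ref{eg:main example}, whereas you give a general recipe for manufacturing such examples; both suffice for the existential claim, and your verification that the envelope is unchanged when the bridged portion is replaced by any monotone continuous $V$ below the chord with matching endpoints is sound.
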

These propositions are especially useful in dealing with option incentive schemes in hedge fund management. For a fund manager with SAHARA preference $U$ and an incentive contract $h$ (motivating example in Section \ref{sec:motivation}), his/her utility after composition is $U \circ h$, which is a PSAHARA utility by Proposition \ref{prop:generality}. Then the concave envelope $(U \circ h)^{**}$ is also a PSAHARA utility by Proposition \ref{prop:enve}. We can thus apply Theorem \ref{optimal portfolio} below to get the manager's optimal portfolio.
\begin{example}\label{eg:main example}
This example further shows the generality of the PSAHARA utility family. We consider a non-monotone and discontinuous utility function:
\begin{equation}\label{eq:main example}
U\(x\) := \left\{
\begin{aligned}
&-\frac{\gamma_0}{\alpha_0^2-1}\( \(x-d_0\) + \sqrt{\beta^2 +\(x-d_0\)^2 } \)^{-\alpha_0}\(\(x-d_0\) + \alpha_0 \sqrt{\beta^2 +\(x-d_0\)^2 }\),&& x < l_1;\\
& -20 \(l_2 -x\)^{1-\alpha_0} + b_1, && l_1 \leq x < l_2;\\
& -\frac{\gamma_3}{\alpha_3^2-1} \(\(x-d_3\)+\sqrt{\beta^2 + \( x-d_3\)^2} \)^{-\alpha_3}\(\(x-d_3\)+\alpha_3\sqrt{\beta^2 + \( x-d_3\)^2}\) + b_2, && l_2 \leq x < l_3;\\
&b_3, && l_3 \leq x < l_4;\\
& -\frac{\gamma_5}{\alpha_5^2-1} \(\(x-d_5\)+\sqrt{\beta^2 + \( x-d_5\)^2} \)^{-\alpha_5}\(\(x-d_5\)+\alpha_5\sqrt{\beta^2 + \( x-d_5\)^2}\) + b_4, && x \geq l_4,\\ 
\end{aligned}
\right.
\end{equation}
where we let $l_1 = -6, l_2 = -4.5, l_3 = -1, l_4 = 2, \beta = 1, \alpha_0 = 1.7, \alpha_3 = 2.2, \alpha_5 = 1.2, d_0 = 3, d_3 = 1, d_5 = 6, \gamma_0 = 2, \gamma_3 = 1.5, \gamma_5 = 7$. $b_1, b_2, b_3, b_4$ are suitable real numbers to make the function continuous on $x \geq l_1$.
This utility function is not a PSAHARA utility since there exist decreasing segments and jumps. However, as shown in Figure \ref{fig:main example}, the concave envelope of \eqref{eq:main example} is a PSAHARA utility.
\begin{figure}[t]
    \begin{minipage}{0.5\textwidth}
    \centering
    \includegraphics[width=\textwidth]{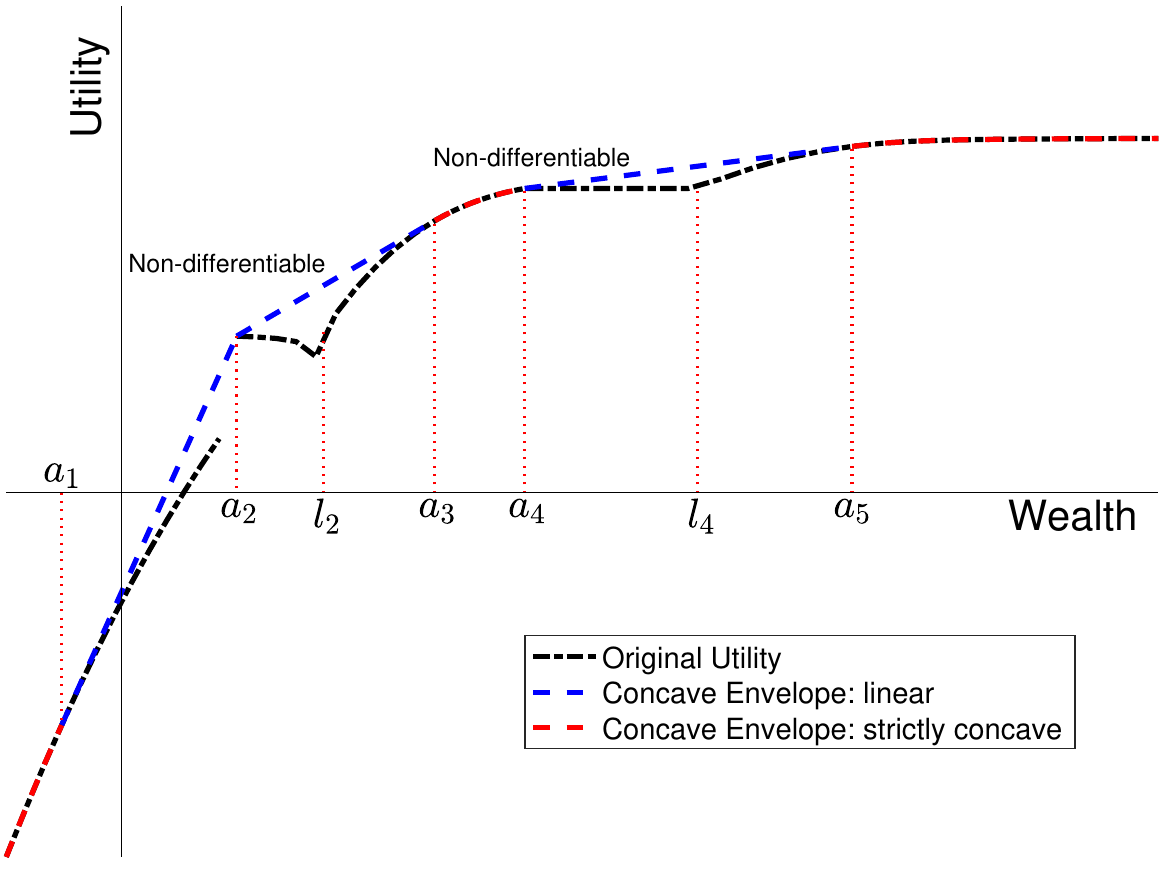}
    \end{minipage}
    \hspace{1em plus 1fill}
    \begin{minipage}{0.48\textwidth}
    \begin{equation*}
    U^{**}\(x\) := \left\{
    \begin{aligned}
    & \gamma_0 \hat{U}\(x; \alpha_0, \beta, d_0\) + u_0, && x < a_1;\\
    & \gamma_0 \hat{U}'\(a_1; \alpha_0, \beta, d_0\) \(x - a_1\)\\
    & \quad + \hat{U}\(a_1; \alpha_0, \beta, d_0\), && a_1 \leq x < a_2;\\
    & \gamma_3 \hat{U}'\(a_3; \alpha_3, \beta, d_3\) \(x - a_3\)\\
    & \quad + \hat{U}\(a_3; \alpha_3, \beta, d_3\), && a_2 \leq x < a_3;\\
    & \gamma_3 \hat{U}\(x; \alpha_3, \beta, d_3\) + u_3, && a_3 \leq x < a_4;\\
    & \gamma_5 \hat{U}'\(a_5; \alpha_5, \beta, d_5\) \(x - a_5\)\\
    & \quad + \hat{U}\(a_5; \alpha_5, \beta, d_5\), && a_4 \leq x < a_5;\\
    & \gamma_5 \hat{U}\(x; \alpha_5, \beta, d_5 \) + u_5, && x \geq a_5.
    \end{aligned}
    \right.
    \end{equation*}
    \end{minipage}
    \caption{The original utility \eqref{eq:main example} and the concave envelope. In the figure, the red parts are the segments where the concave envelope coincides with the original utility, while the blue straight lines are the segments where the concave envelope does not coincide with the original utility. $\hat{U}$ is the utility function defined in \eqref{basic_U} and $\hat{U}'$ is the derivative of $\hat{U}$ shown in \eqref{eq:derivative}. $a_1, a_3$, $a_5$ are the tangent points between the linear segments and the strictly concave parts. $(l_1, l_3) = (a_2, a_4)$. $u_0, u_3, u_5$ are the correction constants to make the function continuous. As labeled in the figure, $U^{**}$ is non-differentiable at $a_2$ and $a_4$.}
    \label{fig:main example}
\end{figure}

\end{example}

\section{A Unified Formula of the Optimal Portfolio}
\subsection{Optimal Wealth and Optimal Portfolio}\label{sec:optimal portfolio}
We proceed to show the optimal terminal wealth, the optimal wealth process and the optimal control of PSAHARA utilities for Problem \eqref{eq:main problem}. We adopt the martingale and duality method and the concavification technique. The proofs are included in Section \ref{sec:technical discussions} and Appendix \ref{proofs}.
\begin{theorem}\label{thm:optimal wealth}
For a given utility function $U$, suppose its concave envelope $U^{**}$ takes the form in Definition \ref{def:PSAHARA utility} and $\alpha_k \in [ 0, \infty )$ for each $k \in \{ 0, 1, \ldots, n\}$. For Problem \eqref{eq:main problem},\\ 
(1) the optimal terminal wealth is given by
\begin{equation}\label{opt_terminal_wealth}
    X_T^* = \sum_{k=1}^n a_k \mathds{1}_{\left\{ y^* \xi_T \in \( \gamma_k^+, \gamma_k^-\) \right\}}
     +\sum_{k=0}^n \left\{ \left( d_k + \frac{1}{2} \( \( \frac{\gamma_k}{y^* \xi_T} \)^{\frac{1}{\alpha_k}} - \beta_k^2 \( \frac{\gamma_k}{y^* \xi_T} \)^{-\frac{1}{\alpha_k}} \) \right) \mathds{1}_{\left\{y^*\xi_T \in \( \gamma_{k+1}^-, \gamma_k^+\) \right\} } \right\}, \; a.s.,
\end{equation}
where $y^*$ is a unique positive number that satisfies
\begin{equation}\label{eq:Lagrange multiplier}
\E \[ \xi_T X_T^*\] = x_0;
\end{equation}
(2) the optimal wealth at time $t \in [ 0, T )$ is given by
\begin{equation}\label{opt_t_wealth}
\begin{aligned}
    X_t^* &:= X_t^D + X_t^B + X_t^R + X_t^{\bar{R}}\\
    &= \sum_{k=0}^n \( X_{t,k}^D + X_{t,k}^B + X_{t,k}^R + X_{t,k}^{\bar{R}} \),
\end{aligned}
\end{equation}
where
{\small
\begin{align}\label{part_t_wealth}
&X_{t,k}^D = && \left\{ 
\begin{aligned}
& e^{-\int_t^T r_s \, \d s} a_k \[ \Phi \( g_0\( \frac{\gamma_k^+}{y^* \xi_t} \) \) - \Phi \( g_0\( \frac{\gamma_k^-}{y^* \xi_t} \) \) \], && k \neq 0;\\
& 0, && k = 0,
\end{aligned}
\right.\nonumber\\
&X_{t,k}^B = && e^{-\int_t^T r_s \, \d s} d_k \[ \Phi \( g_0\( \frac{\gamma_{k+1}^-}{y^* \xi_t} \) \) - \Phi \( g_0\( \frac{\gamma_k^+}{y^* \xi_t} \) \) \] \mathds{1}_{\{\alpha_k \neq 0 \} }, \nonumber\\
&X_{t,k}^R = && e^{\(-1+\frac{1}{\alpha_k}\)\int_t^T \(r_s + \frac{1}{2\alpha_k}\| \pmb{\theta}_s \|_2^2 \, \)\d s  } \frac{1}{2} \( \frac{\gamma_k}{y^* \xi_t} \)^{\frac{1}{\alpha_k}}\( \Phi\( g_{1,k} \( \frac{\gamma_{k+1}^-}{y^* \xi_t} \) \) - \Phi\( g_{1,k} \( \frac{\gamma_k^+}{y^* \xi_t} \) \) \) \mathds{1}_{\{\alpha_k \neq 0\}}, \nonumber\\
&X_{t,k}^{\bar{R}} = &&e^{\(-1-\frac{1}{\alpha_k}\)\int_t^T \(r_s - \frac{1}{2\alpha_k}\| \pmb{\theta}_s \|_2^2 \, \)\d s  } \( -\frac{1}{2}\) \beta_k^2 \( \frac{\gamma_k}{y^* \xi_t} \)^{-\frac{1}{\alpha_k}}\( \Phi\( g_{2,k} \( \frac{\gamma_{k+1}^-}{y^* \xi_t} \) \) - \Phi\( g_{2,k} \( \frac{\gamma_k^+}{y^* \xi_t} \) \) \) \mathds{1}_{\{\alpha_k \neq 0\}},
\end{align}}and the functions $g_0 ( \cdot),\; g_{1,k}( \cdot),\; g_{2,k}(\cdot)$ are given by
{\small
\begin{equation}\label{g0}
   g_0 \(z\) := -\frac{1}{\sqrt{ \int_t^T \| \pmb{\theta}_s \|_2^2 \, \d s }} \( \log\(z\) + \int_t^T \(r_s - \frac{1}{2} \| \pmb{\theta}_s\|_2^2 \) \, \d s \), \quad z > 0, 
\end{equation}}
{\small
\begin{equation}\label{g1g2}
    g_{1,k}\(z\) := g_0\(z\) -\frac{1}{\alpha_k} \sqrt{\int_t^T \| \pmb{\theta}_s \|_2^2 \, \d s },\quad g_{2,k}\(z\) := g_0\(z\) + \frac{1}{\alpha_k} \sqrt{\int_t^T \| \pmb{\theta}_s \|_2^2 \, \d s },\quad z > 0, \quad k \in \{0, 1,\ldots, n\}.
\end{equation}}
\end{theorem}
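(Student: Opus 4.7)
The plan is to combine a concavification reduction with the martingale/duality method adapted to the incomplete-market pricing kernel $\xi_t$ in \eqref{pricingkernel}. First, because $\xi_T$ has a continuous (lognormal) law, any optimizer of Problem \eqref{eq:main problem} coincides almost surely with an optimizer of the concavified problem \eqref{problem_enve} in which $U$ is replaced by $U^{**}$; by Proposition \ref{prop:enve} this $U^{**}$ is again PSAHARA, hence concave, increasing and continuous on $\R$. It therefore suffices to solve the concave problem under the single budget constraint $\E[\xi_T X_T]\le x_0$, and for this we form the Lagrangian
\begin{equation*}
\E\bigl[U^{**}(X_T)\bigr]-y\bigl(\E[\xi_T X_T]-x_0\bigr)
\end{equation*}
and maximize pointwise, giving $X_T^*=I(y^*\xi_T)$ where $I$ is the generalized inverse of $(U^{**})'$ and $y^*>0$ is the unique multiplier solving \eqref{eq:Lagrange multiplier}. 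Uniqueness and existence of $y^*$ follow from the strict monotonicity and limiting behavior of $y\mapsto \E[\xi_T I(y\xi_T)]$, together with the continuity of $\xi_T$.

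To identify $I$ explicitly, I use the PSAHARA structure on each segment. On a strictly concave piece, $(U^{**})'(x)=\gamma_k\bigl((x-d_k)+\sqrt{\beta_k^2+(x-d_k)^2}\bigr)^{-\alpha_k}$, and inverting via the substitution $u=(x-d_k)+\sqrt{\beta_k^2+(x-d_k)^2}$, which satisfies $x-d_k=\tfrac12(u-\beta_k^2/u)$, gives
\begin{equation*}
x=d_k+\tfrac12\Bigl(\bigl(\gamma_k/(y\xi_T)\bigr)^{1/\alpha_k}-\beta_k^2\bigl(\gamma_k/(y\xi_T)\bigr)^{-1/\alpha_k}\Bigr)\quad\text{for }y\xi_T\in(\gamma_{k+1}^-,\gamma_k^+).
\end{equation*}
At each kink $a_k$ the subdifferential of $U^{**}$ is $[\gamma_k^+,\gamma_k^-]$, so for $y\xi_T\in(\gamma_k^+,\gamma_k^-)$ the maximizer sticks at $a_k$. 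Summing the two regimes over $k$ reproduces \eqref{opt_terminal_wealth}, and the $\mathds{1}_{\{\alpha_k\neq 0\}}$ factors later in \eqref{part_t_wealth} arise because $\alpha_k=0$ collapses the strictly concave piece to a linear segment, absorbing $d_k$, $\gamma_k$ into a single slope.

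For the wealth process, I invoke the self-financing/budget identity $X_t^*=\xi_t^{-1}\E[\xi_T X_T^*\mid\F_t]$ (the conditional super-replication price under the minimal-norm price of risk \eqref{eq:chosen theta}). I then split $X_T^*$ into four contributions matching the decomposition $X_t^D+X_t^B+X_t^R+X_t^{\bar R}$: the kink constants $a_k$, the centering constants $d_k$, the positive power $(\gamma_k/(y^*\xi_T))^{1/\alpha_k}$, and the negative power $\beta_k^2(\gamma_k/(y^*\xi_T))^{-1/\alpha_k}$. Conditional on $\F_t$, $\log(\xi_T/\xi_t)$ is Gaussian with mean $-\int_t^T(r_s+\tfrac12\|\pmb{\theta}_s\|_2^2)\d s$ and variance $\int_t^T\|\pmb{\theta}_s\|_2^2\d s$, so each conditional expectation reduces to the Gaussian integral $\E[\mathrm{e}^{a\eta}\mathds{1}_{\{\eta\in J\}}\mid\F_t]$ for some exponent $a\in\{0,1-1/\alpha_k,1+1/\alpha_k\}$. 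The $a=0$ case produces the $\Phi(g_0(\cdot))$ terms in $X_t^D,X_t^B$; the other two cases produce the mean-shifted CDF arguments $g_{1,k},g_{2,k}$ in \eqref{g1g2}, while collecting the $\xi_t^{-1}$ factor and the Gaussian moment-generating factor $\mathrm{e}^{a\mu_\eta+a^2\sigma_\eta^2/2}$ produces exactly the prefactors $\mathrm{e}^{(-1\pm 1/\alpha_k)\int_t^T(r_s\mp(2\alpha_k)^{-1}\|\pmb{\theta}_s\|_2^2)\d s}$ appearing in \eqref{part_t_wealth}.

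The main obstacle is the justification of the concavification in the incomplete-market setting: I must show that the candidate $X_T^*$, which depends only on $\xi_T$, is indeed attainable as the terminal value of an admissible wealth process, and that the ambiguity of the market price of risk in \eqref{eq:price of risk} does not affect the optimum. The continuity of the law of $\xi_T$ is what makes the kink sets null (so the pointwise argmax is almost surely unique even where $U^{**}$ is not differentiable), and the specific minimal-norm choice \eqref{eq:chosen theta} is what makes $X_T^*$ replicable by an $\{\F_t\}$-progressively measurable portfolio—this is precisely the content of the developed optimality condition (Proposition \ref{prop:developed optimal condition}), which upgrades the classical positive-wealth martingale approach of \cite{KLSX1991} to allow for negative wealth and non-concave/non-differentiable utilities.
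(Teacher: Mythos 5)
Your proposal is correct and follows essentially the same route as the paper: concavification justified by the continuous law of $\xi_T$, pointwise maximization of the Lagrangian to invert $(U^{**})'$ segment by segment (with the kink/subdifferential case giving the $a_k$ atoms), and then the martingale representation $X_t^*=\xi_t^{-1}\E[\xi_T X_T^*\mid\F_t]$ evaluated via lognormal Gaussian integrals, with the incomplete-market attainability handled exactly as in the paper's Lemma \ref{lem:optimal condition in KLSX1991} and Proposition \ref{prop:developed optimal condition}. The only quibble is a bookkeeping slip in your list of exponents (the constant terms $a_k,d_k$ carry total exponent $1$ from the $Z_{t,T}$ factor, not $0$, which is what produces the discount $e^{-\int_t^T r_s\,\d s}$ in front of $\Phi(g_0(\cdot))$), but this does not affect the argument.
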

The optimal wealth at time $t$ consists of four components, each representing an aspect of the manager's investment behavior. A further illustration is given below in Theorem \ref{optimal portfolio}.
\begin{itemize}
\item The terms $X_{t,k}^R$ and $X_{t,k}^{\bar{R}}$ arise from the symmetric nature of SAHARA utility functions, representing the investor's propensity towards the risk-seeking behavior. Specifically, $X_{t,k}^R$ encourages an increase to the risky investment when the wealth exceeds the threshold level, and hence is called the positive main term. Conversely, $X_{t,k}^{\bar{R}}$ promotes an increase to the risky investment when wealth is below the threshold, and it is hence referred to as the negative main term. These two terms are the main driver of $X_t^*$, as illustrated in the later Section \ref{asymptotic analysis} of asymptotic analysis.
\item The term $X_{t,k}^D$ is associated with the non-differentiable point $a_k$ and hence is called the first-order risk aversion term. This component plays a pivotal role leading to the manager's risk aversion around non-differentiable points, as evidenced by the term $\pmb{\pi}_t^{(3)}$ later in Theorem \ref{optimal portfolio}. It leads to the reduction in risky investment due to the first-order risk aversion.
\item The term $X_{t,k}^B$ is the loss aversion term as it directly comes from the threshold level $d_k$. This term influences the optimal portfolio through its contribution to the term $\pmb{\pi}_t^{(4)}$, encapsulating the loss aversion caused by the threshold levels.
\end{itemize}

In the following theorem, we show the optimal control of Problem \eqref{problem_enve}, which is also the unified formula of the optimal portfolio for PSAHARA utilities. The proof is included in Section \ref{sec:technical discussions} and Appendix \ref{proofs}.

\begin{theorem}\label{optimal portfolio}
Suppose that the concave envelope $U^{**}$ has the form in Definition \ref{def:PSAHARA utility} and $\alpha_k \in [ 0, \infty)$ for each $k \in \{0, 1, \ldots, n\}$. For Problem \eqref{eq:main problem}, the optimal portfolio at time $t \in [ 0, T)$ is 
\begin{equation}\label{pi_t}
\pmb{\pi}_t^* = \pmb{\pi}_{t}^{(1)} + \pmb{\pi}_{t}^{(2)} +  \pmb{\pi}_{t}^{(3)} + \pmb{\pi}_{t}^{(4)},
\end{equation}
where
{\small
\begin{align}\label{parts_pi_t}
\pmb{\pi}_{t}^{(1)} &= \(\pmb{\sigma}_t\pmb{\sigma}_t^\intercal\)^{-1} \(\pmb{\mu}_t - r_t \mathbf{1}_m \) \sum_{k=0}^n \frac{1}{\alpha_k} \sqrt{ \(X_{t,k}^R + X_{t,k}^{\bar{R}} \)^2 + b_{t,k} }\times \mathds{1}_{\left\{ \alpha_k \neq 0 \right\}}, \nonumber\\
\pmb{\pi}_{t}^{(2)} &= -\frac{\(\pmb{\sigma}_t\pmb{\sigma}_t^\intercal\)^{-1} \(\pmb{\mu}_t - r_t \mathbf{1}_m \) }{\sqrt{\int_t^T \| \pmb{\theta}_s \|_2^2 \, \d s}} \sum_{k=0}^n \left\{ X_{t,k}^R \frac{ \Phi' \(g_{1,k} \(\frac{\gamma_{k+1}^-}{y^* \xi_t} \) \) - \Phi'\(g_{1,k} \(\frac{\gamma_k^+}{y^* \xi_t} \)\)}{ \Phi \(g_{1,k} \(\frac{\gamma_{k+1}^-}{y^* \xi_t} \) \) - \Phi\(g_{1,k} \(\frac{\gamma_k^+}{y^* \xi_t} \)\)} \right.\nonumber \\
&\quad\quad\quad\quad\quad\quad\quad\quad\quad\quad\quad\quad +\left. X_{t,k}^{\bar{R}} \frac{ \Phi' \(g_{2,k} \(\frac{\gamma_{k+1}^-}{y^* \xi_t} \) \) - \Phi'\(g_{2,k} \(\frac{\gamma_k^+}{y^* \xi_t} \)\)}{ \Phi \(g_{2,k} \(\frac{\gamma_{k+1}^-}{y^* \xi_t} \) \) - \Phi\(g_{2,k} \(\frac{\gamma_k^+}{y^* \xi_t} \)\)} \right\}\times \mathds{1}_{\left\{ \alpha_k \neq 0 \right\}},\nonumber\\
\pmb{\pi}_t^{(3)} &= -\frac{\(\pmb{\sigma}_t\pmb{\sigma}_t^\intercal\)^{-1} \(\pmb{\mu}_t - r_t \mathbf{1}_m \) }{\sqrt{\int_t^T \| \pmb{\theta}_s \|_2^2 \, \d s}} e^{-\int_t^T r_s \, \d s } \sum_{k=1}^{n} \left\{ a_k \[ \Phi' \( g_0 \( \frac{\gamma_k^+}{y^* \xi_t} \) \) - \Phi' \( g_0 \( \frac{\gamma_k^-}{y^* \xi_t} \) \) \] \right\},\nonumber\\
\pmb{\pi}_t^{(4)} &= -\frac{\(\pmb{\sigma}_t\pmb{\sigma}_t^\intercal\)^{-1} \(\pmb{\mu}_t - r_t \mathbf{1}_m \) }{\sqrt{\int_t^T \| \pmb{\theta}_s \|_2^2 \, \d s}}e^{-\int_t^T r_s \, \d s } \sum_{k=0}^{n} \left\{ d_k \[ \Phi' \( g_0 \( \frac{\gamma_{k+1}^-}{y^* \xi_t} \) \) - \Phi' \( g_0 \( \frac{\gamma_k^+}{y^* \xi_t} \) \) \] \right\},
\end{align}}and
{\small
\begin{equation}\label{b_tk}
b_{t,k} := \beta_k^2 e^{2\int_t^T \(-r_s + \frac{1}{2\alpha_k^2} \| \pmb{\theta}_s \|_2^2\) \, \d s } \(\Phi \(g_{1,k} \(\frac{\gamma_{k+1}^-}{y^* \xi_t} \) \) - \Phi\(g_{1,k} \(\frac{\gamma_k^+}{y^* \xi_t} \)\) \)\(\Phi \(g_{2,k} \(\frac{\gamma_{k+1}^-}{y^* \xi_t} \) \) - \Phi\(g_{2,k} \(\frac{\gamma_k^+}{y^* \xi_t} \)\) \).
\end{equation}}
\end{theorem}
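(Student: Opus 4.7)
The plan is to recover the optimal portfolio from the optimal wealth process $X_t^*$ already identified in Theorem \ref{thm:optimal wealth}. On $[0,T)$ each summand in \eqref{part_t_wealth} is a smooth deterministic function of $(t,\xi_t)$, so writing $X_t^* = F(t,\xi_t)$ and using $\d\xi_t = -\xi_t r_t\,\d t - \xi_t \pmb{\theta}_t^\intercal \,\d \mathbf{W}_t$, an Itô expansion shows that the diffusion part of $X_t^*$ equals $-\xi_t F_\xi(t,\xi_t)\,\pmb{\theta}_t^\intercal\,\d\mathbf{W}_t$. Matching this with $\pmb{\pi}_t^{*\intercal}\pmb{\sigma}_t\,\d\mathbf{W}_t$ in the wealth SDE \eqref{wealthprocess} and using the particular choice $\pmb{\theta}_t = \pmb{\sigma}_t^\intercal(\pmb{\sigma}_t\pmb{\sigma}_t^\intercal)^{-1}(\pmb{\mu}_t - r_t\mathbf{1}_m)$ from \eqref{eq:chosen theta}, together with the full row rank of $\pmb{\sigma}_t$, gives the unique identification
\[
\pmb{\pi}_t^* \;=\; -\,\xi_t\,\frac{\partial F}{\partial \xi_t}(t,\xi_t)\,(\pmb{\sigma}_t\pmb{\sigma}_t^\intercal)^{-1}(\pmb{\mu}_t - r_t\mathbf{1}_m).
\]
The rest of the proof is the term-by-term evaluation of $\xi_t\partial_{\xi_t}$ applied to the four summands $X_{t,k}^D,X_{t,k}^B,X_{t,k}^R,X_{t,k}^{\bar R}$ of \eqref{part_t_wealth}, aggregated across $k$.

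The differentiation rests on two elementary identities. First, $\xi_t\partial_{\xi_t}g_0(c/(y^*\xi_t)) = 1/\sqrt{\int_t^T\|\pmb{\theta}_s\|_2^2\,\d s}$ (and identically for $g_{1,k},g_{2,k}$), which converts every $\Phi$-derivative into precisely the $\Phi'$-differences appearing in $\pmb{\pi}_t^{(2)},\pmb{\pi}_t^{(3)},\pmb{\pi}_t^{(4)}$. Second, $\xi_t\partial_{\xi_t}(\gamma_k/(y^*\xi_t))^{\pm 1/\alpha_k} = \mp\tfrac{1}{\alpha_k}(\gamma_k/(y^*\xi_t))^{\pm 1/\alpha_k}$. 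Applying the product rule to $X_{t,k}^R$ yields $\xi_t\partial_{\xi_t}X_{t,k}^R = -\tfrac{1}{\alpha_k}X_{t,k}^R + (\Phi'\text{-density term})$, and symmetrically $\xi_t\partial_{\xi_t}X_{t,k}^{\bar R} = +\tfrac{1}{\alpha_k}X_{t,k}^{\bar R} + (\Phi'\text{-density term})$. After the overall minus sign in the master formula, the homothety pieces aggregate to $\tfrac{1}{\alpha_k}(X_{t,k}^R - X_{t,k}^{\bar R})(\pmb{\sigma}_t\pmb{\sigma}_t^\intercal)^{-1}(\pmb{\mu}_t - r_t\mathbf{1}_m)$ (up to the rewriting below), the density pieces assemble into $\pmb{\pi}_t^{(2)}$, while differentiating $X_{t,k}^D$ and $X_{t,k}^B$ — which involve only $\Phi\circ g_0$ — produces $\pmb{\pi}_t^{(3)}$ and $\pmb{\pi}_t^{(4)}$ directly.

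The delicate step I expect to be the main obstacle is reconciling the coefficient $\tfrac{1}{\alpha_k}(X_{t,k}^R - X_{t,k}^{\bar R})$ produced above with the symmetric-looking expression $\tfrac{1}{\alpha_k}\sqrt{(X_{t,k}^R + X_{t,k}^{\bar R})^2 + b_{t,k}}$ stated in \eqref{parts_pi_t}. I would establish the algebraic identity $(X_{t,k}^R + X_{t,k}^{\bar R})^2 + b_{t,k} = (X_{t,k}^R - X_{t,k}^{\bar R})^2$ by direct computation: since the factors $(\gamma_k/(y^*\xi_t))^{\pm 1/\alpha_k}$ cancel in the product $X_{t,k}^R X_{t,k}^{\bar R}$, one finds $2X_{t,k}^R X_{t,k}^{\bar R} = -b_{t,k}$ in exact agreement with \eqref{b_tk}. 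The sign of the square root is fixed by observing that concavity of $U^{**}$ forces $\gamma_{k+1}^- \le \gamma_k^+$, so that the decreasing maps $g_{j,k}\circ(\cdot/(y^*\xi_t))$ yield $P_{j,k}:=\Phi(g_{j,k}(\gamma_{k+1}^-/(y^*\xi_t))) - \Phi(g_{j,k}(\gamma_k^+/(y^*\xi_t)))\ge 0$ for $j=1,2$, whence $X_{t,k}^R \ge 0 \ge X_{t,k}^{\bar R}$ and the non-negative root equals $X_{t,k}^R - X_{t,k}^{\bar R}$. Finally, admissibility of $\pmb{\pi}^*$ follows from smoothness of $F$ on $[0,T)$ and standard $L^2$ bounds on the Gaussian-density factors, while optimality is inherited from Theorem \ref{thm:optimal wealth}, completing the derivation.
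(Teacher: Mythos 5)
Your overall route is the same as the paper's: write $X_t^* = F(t,\xi_t)$, match the diffusion term of the It\^o expansion against the wealth SDE to obtain $\pmb{\pi}_t^* = -(\pmb{\sigma}_t\pmb{\sigma}_t^\intercal)^{-1}(\pmb{\mu}_t - r_t\mathbf{1}_m)\,\xi_t\,\partial_{\xi_t}X_t^*$, differentiate the four blocks term by term, and rearrange. Your identification of the hidden algebraic step is correct and in fact more explicit than the paper, which only says ``we can rearrange the terms'': the identity $(X_{t,k}^R+X_{t,k}^{\bar R})^2 + b_{t,k} = (X_{t,k}^R - X_{t,k}^{\bar R})^2$ does hold, and your sign analysis ($X_{t,k}^R\ge 0\ge X_{t,k}^{\bar R}$ because concavity forces $\gamma_{k+1}^-\le\gamma_k^+$ and the maps $g_{j,k}$ are decreasing) correctly selects the branch of the square root. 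One arithmetic slip: the relation you need, and the one that direct computation actually yields, is $4X_{t,k}^RX_{t,k}^{\bar R} = -b_{t,k}$, not $2X_{t,k}^RX_{t,k}^{\bar R} = -b_{t,k}$; with your factor the identity would read $(X_{t,k}^R+X_{t,k}^{\bar R})^2 + b_{t,k} = (X_{t,k}^R)^2 + (X_{t,k}^{\bar R})^2$, which is not the target expression.

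The genuine gap is that you never address market incompleteness, which is the one non-mechanical part of this theorem. Since $m\le q$, the pricing kernel is not unique, and the terminal wealth of Theorem 1 is only the pointwise maximizer of $U(x)-y^*\xi_T x$ for the particular kernel built from the choice \eqref{eq:chosen theta}. Replicability of $F(t,\xi_t)$ by a portfolio in the $m$ traded assets does follow from your diffusion matching (because $\pmb{\theta}_t$ lies in the row space of $\pmb{\sigma}_t$), but optimality among all admissible strategies does not: an admissible wealth must satisfy the budget constraint $\E[\xi_T^{\pmb{\nu}}X_T]\le x_0$ for every augmented kernel indexed by $\pmb{\nu}_t\in\ker(\pmb{\sigma}_t)$, so one must verify that the candidate $X_T^*$ meets all of these constraints simultaneously, not only the one for $\pmb{\nu}=\mathbf{0}$. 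The paper closes this by fictitious completion: its Lemma 2 (after Karatzas--Lehoczky--Shreve--Xu, with the observation that the positive-domain assumption is not needed) reduces optimality to exactly that family of inequalities, and its Proposition 3 verifies them via a Girsanov change of measure exploiting the orthogonality of $\pmb{\theta}_t$ and $\pmb{\nu}_t$ under a Novikov condition. ``Optimality is inherited from Theorem 1'' does not cover this; you need to supply the analogue of that lemma and proposition to complete the proof.
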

\begin{remark}
For the simplest case where $n = 0$ and $ \gamma = 1$, the optimal portfolio is reduced to
\begin{equation}
\pmb{\pi}_t^* = \frac{\( \pmb{\sigma}_t \pmb{\sigma}_t^\intercal \)^{-1} \( \pmb{\mu}_t - r_t \mathbf{1}_m \)}{\alpha} \sqrt{ \(X_t^* - de^{-\int_t^T r_s \, \d s} \)^2 + b_t^2 },
\end{equation}
where
$b_t := \beta e^{ -\int_t^T \( r_s - \frac{\| \pmb{\theta}_s \|_2^2}{\alpha^2} \)\, \d s }$. This coincides with Theorem 3.2 in \cite{CPV2011}.
\end{remark}
The optimal portfolio structure for PSAHARA utilities is divided into four terms, each characterizing both economic and mathematical implications.

We name the term $\pmb{\pi}_t^{(1)}$ the asymptotic Merton term. As illustrated later in Section \ref{asymptotic analysis}, this term drives $\pmb{\pi}_t^*/X_t^*$ to the deterministic proportion $\frac{\pmb{\sigma}_t^\intercal \( \pmb{\sigma}_t \pmb{\sigma}_t^\intercal \)^{-1} \( \pmb{\mu}_t - r_t \mathbf{1}_m \) }{\alpha_n}$ as $\xi_t$ approaches $0$ and $-\frac{\pmb{\sigma}_t^\intercal \( \pmb{\sigma}_t \pmb{\sigma}_t^\intercal \)^{-1} \( \pmb{\mu}_t - r_t \mathbf{1}_m \) }{\alpha_0}$ as $\xi_t$ approaches $\infty$, corresponding to the famous Merton proportion (\cite{M1969}). We claim that $\pmb{\pi}_t^{(1)}$ serves as the main driver behind the dynamics of the optimal portfolio.

The second term $\pmb{\pi}_t^{(2)}$ is identified as the risk adjustment term. It leads to risk-seeking behaviors at high wealth levels and results in risk-aversion at low wealth levels. Containing the scale parameter $\beta$ in $X_{t,k}^R$, this term shows a distinct feature that it adjusts the optimal portfolio corresponding to the total wealth scale. 

We name $\pmb{\pi}_t^{(3)}$ as the first-order risk aversion term. It emerges from the non-differentiable points of the utility function. As indicated in \cite{SS1997}, non-differentiable points incur a ``first-order'' risk premium. Here, our finding coincides with the term $\pmb{\pi}_t^{(4)} $ in Theorem 1 in \cite{LLMV2024}, where it is shown that non-differentiable points lead to a decrease in the optimal portfolio.

The term $\pmb{\pi}_t^{(4)}$ is the loss aversion term. It is the weighted sum of the threshold levels. Since it emerges from the threshold level $d$, it vanishes if $d = 0$. It decreases the optimal portfolio in a less amount compared to $\pmb{\pi}_t^{(3)}$. Both terms $\pmb{\pi}_t^{(3)}$ and $\pmb{\pi}_t^{(4)}$ are local corrections based on non-differentiability and threshold levels of the PSAHARA utility.
\begin{remark}\label{rmk:PHARA}
If we impose the constraint $x > d$ when $\beta = 0$ as in Remark \ref{rmk:SAHARA and HARA}, we have a well-defined HARA utility as we set $\beta = 0$. In this case, the optimal portfolio given by Theorem \ref{optimal portfolio} coincides with the formula given by Theorem 1 in \cite{LLMV2024}. Also, the threshold levels highly correspond to the benchmark levels in that study.
\end{remark}
\subsection{Technical Discussions}\label{sec:technical discussions}
This subsection discusses technical details about how Assumptions \ref{assp:assumption 1} and \ref{assp:assumption 2} work in proving Theorems \ref{thm:optimal wealth}-\ref{optimal portfolio}.
Before presenting the proof, we first claim that one difficulty to show  Theorem \ref{thm:optimal wealth} is the market incompleteness, which leads to non-unique pricing kernels and the fact that not every contingent claim can be replicated by a self-financing trading strategy. In the following, we present the proof in successive steps. First, we show that the product of any pricing kernel and any admissible wealth process, $\{\zeta_t X_t\}_{0\leq t\leq T}$, is a supermartingale. Second, we find the optimal terminal wealth for the concave envelope and show that it also solves the optimization problem of the original utility. In other words, we show that the concavification principle (see \cite{C2000}) holds in incomplete markets. Last, we show that this terminal wealth can be replicated. Here we present the proof of supermartingality to stress the importance of Assumptions \ref{assp:assumption 1}-\ref{assp:assumption 2}. The remaining part of the proof of Theorem \ref{thm:optimal wealth} is in Appendix \ref{sec:proof of optimal wealth}.

We begin with an elementary proposition. Let $\M$ denote the set of all pricing kernel processes. That is,
\begin{equation}
\M = \left\{\{\zeta_t\}_{0\leq t\leq T} : \zeta_t = \exp \left\{-\frac12 \int_0^t \| \Tilde{\pmb{\theta}}_s\|_2^2 \, \d s - \int_0^t \Tilde{\pmb{\theta}}_s^\intercal\, \d \mathbf{W}_s  \right\},\; \text{ where $\Tilde{\pmb{\theta}}_t$ is a solution of Eq. \eqref{eq:price of risk} }  \right\}.
\end{equation}
Let $\{X_t^{x_0} \}_{0\leq t\leq T}$ denote any wealth process with initial value $x_0$. Let $\X^{x_0}$ denote the set of all terminal wealth variables, i.e.,
\begin{equation}
    \X^{x_0} := \left\{ X_T^{x_0} = x_0 + \int_0^T \( r_t X_t^{x_0} + \pmb{\pi}_t^\intercal \(  \pmb{\mu}_t - r_t \mathbf{1}_m\)\) \d t + \int_0^T \pmb{\pi}_t^\intercal \pmb{\sigma}_t \d \mathbf{W}_t : \pmb{\pi} \in \mathcal{V} \right\}.
\end{equation}
\begin{proposition}
For any $\{\zeta_t \}_{0\leq t\leq T} \in \M$ and $X_T^{x_0} \in \X^{x_0} $, the process $\{\zeta_t X_t^{x_0}\}_{0\leq t\leq T}$ is a supermartingale under either Assumption \ref{assp:assumption 1} or Assumption \ref{assp:assumption 2}.
\end{proposition}

\begin{proof}
Applying Itô's formula, we get
\begin{equation}\label{eq:discounted wealth process}
\d \( \zeta_t X_t^{x_0} \) = \[\zeta_t \pmb{\pi}_t^\intercal \pmb{\sigma}_t - \zeta_t X_t^{x_0} \Tilde{\pmb{\theta}}_t^\intercal  \] \d \mathbf{W}_t,
\end{equation}
which shows that $\{\zeta_t X_t^{x_0} \}_{0\leq t\leq T}$ is a local martingale. 
If Assumption \ref{assp:assumption 2} holds, then $\{\zeta_t X_t^{x_0} \}_{0\leq t\leq T}$ is a local martingale with a lower bound and hence a supermartingale. This is a common assumption as the utility in the literature usually has a domain bounded from left. As we allow the utility functions to have a unbounded domain from left (i.e., $X_t^{x_0}$ can be arbitrarily negative and there is no lower bound), we need to propose some other assumption to make $\{\zeta_t X_t^{x_0} \}_{0\leq t\leq T}$ in Eq. \eqref{eq:discounted wealth process} indeed a martingale. Hence, Assumption \ref{assp:assumption 1} is proposed from the perspective of integrability. For the rest of this proof, we prove the statement under Assumption \ref{assp:assumption 1}.
Note that
\begin{equation}\label{eq:sufficient condition for MG}
 \quad \E \[\int_0^T \| \zeta_t \pmb{\sigma}_t^\intercal \pmb{\pi}_t - \zeta_t X_t^{x_0}\Tilde{\pmb{\theta}}_t\|_2^2 \,\ d t \]
\leq 2\cdot \( \E \[\int_0^T \zeta_t^2 \| \pmb{\sigma}_t^\intercal \pmb{\pi}_t \|_2^2 \,\ d t \] + \E \[\int_0^T \zeta_t^2 \(X_t^{x_0}\)^2 \|\Tilde{\pmb{\theta}}_t\|_2^2 \,\ d t \]\).
\end{equation}
By Hölder's inequality, for any $p, q > 1$ with $\frac1p + \frac1q = 1$, we have
\small{
\begin{equation}\label{eq:first finiteness}
\begin{aligned}
\E \[\int_0^T \zeta_t^2 \| \pmb{\sigma}_t^\intercal \pmb{\pi}_t \|_2^2 \,\ d t \] &\leq \E\[ \int_0^T \zeta_t^2 \lambda_t^{\max} \|\pmb{\pi}_t\|_2^2\, \d t \]\\
(\text{Fubini--Tonelli Theorem}) &= \int_0^T \lambda_t^{\max} \E \[ \zeta_t^2 \| \pmb{\pi}_t\|_2^2 \] \, \d t\\
(\text{Hölder's Inequality}) &\leq \( \sup_{t\in [0,T]} \lambda_t^{\max} \)\int_0^T \(\E\[ \zeta_t^{2q} \]\)^{\frac1q} \(\E\[\| \pmb{\pi}_t \|_2^{2p} \] \)^{\frac1p} \,\d t\\
(\text{Hölder's Inequality}) &\leq \( \sup_{t\in [0,T]} \lambda_t^{\max} \)\(\int_0^T \E\[ \zeta_t^{2q} \]\, \d t\)^{\frac1q} \(\int_0^T \E\[\| \pmb{\pi}_t \|_2^{2p} \]  \,\d t\)^{\frac1p}.
\end{aligned}
\end{equation}
}
Moreover, we have
\small{
\begin{align}
\E \[\int_0^T \zeta_t^{2q}\,\d t \] &= \E\[ \int_0^T \exp \left\{-q \int_0^t \|\Tilde{\pmb{\theta}}_s\|_2^2 \, \d s - 2q \int_0^t \Tilde{\pmb{\theta}}_s^\intercal \d \mathbf{W}_s \right\} \,\d t\]\nonumber\\
&\leq \E\[ \(\sup_{t \in [0,T]} \exp\left\{-q \int_0^t \| \Tilde{\pmb{\theta}}_s\|_2^2 \, \d s \right\} \) \int_0^T \exp\left\{-2q \int_0^t \Tilde{\pmb{\theta}}_s^\intercal \, \d \mathbf{W}_s \right\}\, \d t \]\nonumber\\
&\leq \E \[ 1 \cdot \int_0^T \exp\left\{ -2q \int_0^t \Tilde{\pmb{\theta}}_s^\intercal \, \d \mathbf{W}_s \right\}\, \d t \]\nonumber\\
(\text{Fubini--Tonelli Theorem})&= \int_0^T \E\[ \exp\left\{-2q \int_0^t \Tilde{\pmb{\theta}}_s^\intercal \, \d \mathbf{W}_s \right\} \]\, \d t.
\end{align}}Let $\rho_t := \int_0^t \Tilde{\pmb{\theta}}_s^\intercal\,\d \mathbf{W}_s $. Then $\rho_t \sim \mathcal{N}\(0, \int_0^t \|\Tilde{\pmb{\theta}}_s\|_2^2\,\d s \) $. Let $\Tilde{\sigma}_t := \sqrt{\int_0^t \|\Tilde{\pmb{\theta}}_s\|_2^2\,\d s } $. By direct computation, we have
{
\begin{align}
\E\[ \exp\left\{-2q\rho_t\right\} \] &= \int_\R \frac{1}{\sqrt{2\pi \Tilde{\sigma}_t^2}} \exp\left\{-2qx - \frac{x^2}{2\Tilde{\sigma}_t^2} \right\}\,\d x
= \exp\left\{2 \Tilde{\sigma}_t^2 q^2\right\}.
\end{align}
}It follows that 
{
\begin{align}
\E \[\int_0^T \zeta_t^{2q}\,\d t \] \leq \int_0^T  \E\[ \exp\left\{-2q\rho_t\right\} \]\,\d t
= \int_0^T \exp\left\{2\Tilde{\sigma}_t^2 q^2\right\}\,\d t
\leq T\cdot \exp\left\{2q^2 \int_0^T \|\Tilde{\pmb{\theta}}_s\|_2^2\,\d s\right\}
\stackrel{\text{by \eqref{eq:Novikov condition}}}{<} \infty.
\end{align}
}Now, we claim that the finiteness of the two terms on the right hand side of \eqref{eq:sufficient condition for MG} relies on that $\E \[\int_0^T \|\pmb{\pi}_t\|_2^{2p}\, \d t\]  < \infty$ for some $p > 1$; we will prove this later in details. Hence, we propose Assumption \ref{assp:assumption 1}. Under this assumption, we can find some $p = 1 + \frac{\epsilon}{2}$ and a corresponding $q$ such that every term on the right hand side of Eq. \eqref{eq:first finiteness} is finite. 
On the other hand, similar to Eq. \eqref{eq:first finiteness}, we have
\begin{equation}\label{eq:second finiteness}
\begin{aligned}
\E \[\int_0^T \zeta_t^2 \(X_t^{x_0}\)^2 \|\Tilde{\pmb{\theta}}_t\|_2^2 \,\ d t \] &\leq \(\sup_{t \in [0, T]} \|\Tilde{\pmb{\theta}}_t\|_2^2 \) \(\int_0^T \E\[ \zeta_t^{2q} \]\, \d t\)^{\frac1q} \(\int_0^T \E\[\(X_t^{x_0} \)^{2p} \]  \,\d t\)^{\frac1p}.
\end{aligned}
\end{equation}
Solving \eqref{wealthprocess} gives
\begin{equation}
X_t^{x_0} = e^{\int_0^t r_s \,\d s} \[ x_0 + \int_0^t e^{-\int_0^s r_u \,\d u} \(\pmb{\mu}_s - r_s \mathbf{1}_m \)^\intercal \pmb{\pi}_s\, \d s + \int_0^t e^{-\int_0^s r_u \, \d u} \pmb{\pi}_s^\intercal \pmb{\sigma}_s \d \mathbf{W}_s \].
\end{equation}
With this, the following inequality holds under Assumption \ref{assp:assumption 1}:
\begin{equation}
\begin{aligned}
\int_0^T \E \[ \(X_t^{x_0}\)^{2p} \]\, \d t 
&\leq T \cdot \E\[ \(\sup_{t\in[0,T]} |X_t^{x_0}| \)^{2p} \]\\
(\text{Burkholder--Davis--Gundy Inequality}) &\leq T\cdot \Tilde{K}_p \cdot \E\[ \(\langle X^{x_0} \rangle_T\)^p \]\\
&= T\cdot \Tilde{K}_p \cdot \E\[ \(\int_0^T\| \pmb{\pi}_t^\intercal \pmb{\sigma}_t\|_2^2\, \d t \)^p \]\\
&\leq T\cdot \Tilde{K}_p \cdot \E \[ \(\int_0^T \lambda_t^{\max} \|\pmb{\pi}_t\|_2^2\, \d t \)^p \]\\
&= T\cdot \Tilde{K}_p \(\sup_{t\in[0,T]}\lambda_t^{\max} \)^p \cdot \E\[\(\int_0^T\|\pmb{\pi}_t\|_2^2 \, \d t \)^p \]\\
(\text{Hölder's Inequality}) &\leq T\cdot \Tilde{K}_p \(\sup_{t\in[0,T]}\lambda_t^{\max} \)^p \cdot \E\[T^{\frac{p}{q}}\cdot \(\int_0^T \|\pmb{\pi}_t\|_2^{2p}\, \d t \)\]\\
& = T^{1 + \frac{p}{q}} \cdot \Tilde{K}_p \cdot \(\sup_{t \in [0,T]} \lambda_t^{\max}\)^p \E \[ \int_0^T \|\pmb{\pi}_t \|_2^{2p}\, \d t \]< \infty,
\end{aligned}
\end{equation}
where $\Tilde{K}_p$ is a constant depending on $p$.
Again, if $\E\[\int_0^T  \| \pmb{\pi}_t\|_2^{2+\epsilon}\, \d t \] $ is finite for some $\epsilon > 0$, we have that $\E \[\int_0^T \zeta_t^2 \(X_t^{x_0}\)^2 \|\pmb{\theta}_t\|_2^2 \d t \]$ is finite. 
By Corrollary 3.2.6 in \cite{O2003}, the two finiteness in Eqs. \eqref{eq:first finiteness} and \eqref{eq:second finiteness} guarantee that $\{\zeta_t X_t^{x_0}\}_{0\leq t\leq T}$ is an $\{\F_t\}_{0\leq t\leq T}$-martingale, and hence a supermartingale.
\end{proof}
Further, we give a proposition on the concavification technique in incomplete markets.
\begin{proposition}
Let $U^{**}$ be the concave envelope of $U$. Then the optimization problem \eqref{problem_enve} shares the same optimal solution as problem \eqref{eq:main problem}.
\end{proposition}
\begin{proof}
We first derive the concave envelope $U^{**}$ of $U$ and discuss the optimization problem \eqref{problem_enve}.
To derive the former, we utilize the convex conjugate of $U^{**} $:
\begin{equation}\label{eq:convex conjugate}
\begin{aligned}
\Tilde{U}^{**}\(\kappa \) &= \max_{x \in \text{dom} U} \left\{ U^{**}\(x\) - \kappa x \right\}\\
&= U^{**} \(I\(\kappa\) \) - \kappa I\(\kappa\),\quad \kappa > 0,
\end{aligned}
\end{equation}
where $I$ is the generalized inverse of $\(U^{**}\)' $, defined by
\begin{equation}
I(y) := \inf\left\{ x \in \text{dom}U : \(U^{**}\)'(x) \leq y \right\}.
\end{equation}
By definition of $I(\cdot)$, the function $y \mapsto \E\[ \zeta_T I\(y \zeta_T\) \]$ is strictly decreasing for any $\zeta \in \M$. Hence, for fixed $\zeta \in \M$, we can define a function $\Y_\zeta \(\cdot\)$ as the inverse of $y \mapsto \E\[ \zeta_T I\(y \zeta_T\) \]$. Substituting $\kappa =  \Y_\zeta \(x_0\)\zeta_T$ in \eqref{eq:convex conjugate}, we have
\begin{equation}\label{eq:supermartingale}
U^{**} \( I\(\Y_\zeta \(x_0\) \zeta_T \) \) - \Y_\zeta \(x_0\) \zeta_T I\( \Y_\zeta \(x_0\) \zeta_T\) \geq U^{**} \(X_T^{x_0} \) - \Y_\zeta \(x_0\) \zeta_T X_T^{x_0}.
\end{equation}
Additionally, $\E\[ \zeta_T I\( \Y_\zeta\(x_0\)\zeta_T \) \] = x_0$ and $\E\[\zeta_T X_T^{x_0} \]\leq x_0$. Taking expectation on both sides of \eqref{eq:supermartingale}, we have 
\begin{equation}
\E \[ U^{**} \( I\(\Y_\zeta \(x_0\) \zeta_T \) \) \] \geq \E \[ U^{**}\(X_T^{x_0}\) \].
\end{equation}
Since $X_T^{x_0} \in \X^{x_0}$ is arbitrary, this proves the optimality of the terminal wealth $I\(\Y_\zeta\(x_0\)\zeta_T \)$. Moreover, by the arbitrariness of $\zeta \in \M$, we have that $\E\[U^{**}\( I\(\Y_\zeta \(x_0\) \zeta_T\)\)\]$ has the same value for all $\zeta \in \M$. Hence, we use $\xi$ defined in \eqref{eq:pricing kernel} as a representative.
By definition, we know that $U^{**}(x) \geq U(x) $ for all $x \in \text{dom}U $. Hence, we have
\begin{equation}
    \E \[ U^{**}\( I\( \Y_\xi\(x_0\) \xi_T \) \) \] \geq \E \[ U^{**} \(X_T^{x_0} \) \] \geq \E \[ U\(X_T^{x_0} \) \].
\end{equation}
Define the index set
\begin{equation}
\mathcal{I} := \left\{ k \in \{0, 1, \ldots, n\} | \gamma_k^+ = \gamma_{k+1}^- \right\}.
\end{equation}
Note that $U^{**}\(x\) \neq U\(x\) $ if and only if $x \in \bigcup_{k \in \mathcal{I}} \(a_k, a_{k+1}\) $. Moreover, since $\xi_T$ is continuously distributed, we have
\begin{equation}
\p \( \Y_\xi(x_0) \xi_T \in \{\gamma_k^-, \gamma_k^+\} \) \leq \p \( \Y_\xi(x_0) \xi_T \in \{\gamma_k^-,\gamma_k^+\} \cup \{ \gamma_{n+1}^- \} \) = 0.
\end{equation}
Now for notation simplicity, let $y^* = \Y_\xi(x_0)$ and $X_T^* = I\(y^* \xi_T \)$. We have
{\small
\begin{align}
\E \[ U\(X_T^*\) \] &= \E \[ U\(X_T^*\) \mathds{1}_{\{X_T^* \in \bigcup_{k\in \mathcal{I}}\(a_k,a_{k+1} \)\} } \] + \E \[ U\(X_T^*\)\mathds{1}_{\{ X_T^* \in \R \backslash \bigcup_{k \in \mathcal{I}}\(a_k,a_{k+1}\) \}} \]\nonumber\\
&= \E \[ U\(X_T^*\) \mathds{1}_{\{ y^* \xi_T = \gamma_k^+ \} } \] + \E \[ U\(X_T^*\)\mathds{1}_{\{ X_T^* \in \R \backslash \bigcup_{k \in \mathcal{I}}\(a_k,a_{k+1}\) \}} \]\nonumber\\
&= 0 + \E \[ U\(X_T^*\)\mathds{1}_{\{ X_T^* \in \R \backslash \bigcup_{k \in \mathcal{I}}\(a_k,a_{k+1}\) \}} \]\nonumber\\
&= 0 + \E \[ U^{**}\(X_T^*\)\mathds{1}_{\{ X_T^* \in \R \backslash \bigcup_{k \in \mathcal{I}}\(a_k,a_{k+1}\) \}} \]\nonumber\\
&= \E \[ U^{**}\(X_T^*\) \mathds{1}_{\{X_T^* \in \bigcup_{k\in \mathcal{I}}\(a_k,a_{k+1} \)\} } \] + \E \[ U^{**}\(X_T^*\)\mathds{1}_{\{ X_T^* \in \R \backslash \bigcup_{k \in \mathcal{I}}\(a_k,a_{k+1}\) \}} \]\nonumber\\
&= \E \[ U^{**}\(X_T^*\) \].
\end{align}
}
This proves that the optimizer of Problem \eqref{problem_enve} also maximizes Problem \eqref{eq:main problem}.
\end{proof}

A seminal literature on expected utility maximization in incomplete markets is \cite{KLSX1991}, in which the so--called \textit{fictitious completion} thought experiment is introduced. The authors suppose that the market is completed by $(q-m)$ risky assets with the return vector $\pmb{\alpha}_t \in \R^{q-m}$ and volatility matrix $\pmb{\rho}_t \in \R^{(q-m) \times q}$, where the rows of $ \pmb{\rho}_t $ are orthonormal and in the kernel of $ \pmb{\sigma}_t $. This orthogonality, in some way, implies that the optimal portfolio in incomplete markets should coincide with the one in complete markets. We proceed to show this implication in details. 

Suppose that the market is completed by $(q-m)$ risky assets with the return vector $\pmb{\alpha}_t \in \R^{q-m}$ and volatility matrix $\pmb{\rho}_t \in \R^{(q-m) \times q}$, where the rows of $ \pmb{\rho}_t $ are orthonormal and in the kernel of $ \pmb{\sigma}_t $. Let
\begin{equation}
\pmb{\Tilde{\sigma}}_t :=
\begin{bmatrix}
\pmb{\sigma}_t\\
\pmb{\rho}_t
\end{bmatrix}
\in \R^{q\times q} ,\quad \pmb{\Tilde{\mu}}_t := 
\begin{bmatrix}
\pmb{\mu}_t\\
\pmb{\alpha}_t
\end{bmatrix}
\in \R^q,\quad t \in \[0, T\],
\end{equation}
denote respectively the augmented volatility matrix and the augmented return vector.
With a slight abuse of notations, we have the family of augmented price of risk
\begin{equation}
\pmb{\Tilde{\theta}}_{\pmb{\nu},t} := \pmb{\Tilde{\sigma}}_t^\intercal \( \pmb{\Tilde{\sigma}}_t \pmb{\Tilde{\sigma}}_t^\intercal \)^{-1} \( \pmb{\Tilde{\mu}}_t - r_t \mathbf{1}_q \)\\
= \pmb{\theta}_t + \pmb{\nu}_t,
\end{equation}
where $\pmb{\nu}_t := \pmb{\rho}_t^\intercal \( \pmb{\alpha}_t - r_t \mathbf{1}_{q-m} \) $. Observing that $\pmb{\theta}_t$ is orthogonal to $\pmb{\nu}_t$ for all $t \in [0,T]$, we can define the family of augmented pricing kernels:
\begin{equation}
\begin{aligned}
\xi_t^{\pmb{\nu}} :=& \exp\left\{ -\int_0^t \( r_s + \frac12 \| \pmb{\Tilde{\theta}}_{\pmb{\nu},s} \|_2^2 \)\, \d s - \int_0^t \pmb{\Tilde{\theta}}_{\pmb{\nu},s}^\intercal \, \d \mathbf{W}_s \right\}\\
=&\exp\left\{ -\int_0^t \( r_s + \frac12 \(\| \pmb{\theta}_t \|_2^2 + \| \pmb{\nu}_t\|_2^2\) \)\, \d s - \int_0^t \pmb{\Tilde{\theta}}_{\pmb{\nu},s}^\intercal \, \d \mathbf{W}_s \right\}.
\end{aligned}
\end{equation}
Our main goal is to show that the expected utility of our strategy is greater or equal to the strategy derived under any augmented pricing kernel. This is achieved by the following lemma.
\begin{lemma}\label{lem:optimal condition in KLSX1991}
Let $X_T^{\pmb{\nu}}$ denote the optimal terminal wealth under the pricing kernel $\{\xi_t^{\pmb{\nu}}\}_{0\leq t \leq T}$, $X_T^*$ denote the optimal wealth under the original pricing kernel $\{\xi_t\}_{0\leq t \leq T}$, and $x_0 > 0$ denote the initial value of the wealth process. If
\begin{equation}\label{eq:financiability}
\mathbb{E} \[ \xi_T^{\pmb{\nu}}X_T^* \] \leq x_0, \quad \text{for all}\; \; \pmb{\nu}_t \in \ker \( \pmb{\sigma}_t \) , 
\end{equation}
where
$
\ker \( \pmb{\sigma}_t \) := \{ \mathbf{v} \in \R^q : \pmb{\sigma}_t \mathbf{v} = \mathbf{0} \}
$, then
the optimal portfolio $\{\pmb{\pi}_t^*\}_{0\leq t\leq T}$ derived under $\{\xi_t\}_{0\leq t \leq T}$ is optimal in the incomplete market.
\end{lemma}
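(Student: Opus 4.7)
The plan is to combine the fictitious-completion approach of \cite{KLSX1991} with a direct martingale-representation argument. The goal is to sandwich the objective: for any incomplete-market admissible $\pmb{\pi}$, produce an upper bound $\mathbb{E}[U(X_T^{\pmb{\pi}})] \le \mathbb{E}[U(X_T^*)]$ from the complete-market relaxation at $\pmb{\nu}=0$, and then use the hypothesis to verify that $\pmb{\pi}_t^*$ actually replicates $X_T^*$ in the incomplete market, so the bound is attained.

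For the upper bound, first I would note that every incomplete-market admissible $\pmb{\pi}$ is also admissible in the fictitiously-completed market with $\pmb{\nu}=0$ (put zero weight on the $q-m$ fictitious assets). That completed market has pricing kernel $\xi_t^0=\xi_t$, and since $X_T^*$ was obtained in Theorem \ref{thm:optimal wealth} as the Lagrangian maximizer of $\mathbb{E}[U^{**}(X_T)]$ subject to $\mathbb{E}[\xi_T X_T]=x_0$, we immediately get $\mathbb{E}[U(X_T^{\pmb{\pi}})]\le\mathbb{E}[U^{**}(X_T^{\pmb{\pi}})]\le\mathbb{E}[U^{**}(X_T^*)]=\mathbb{E}[U(X_T^*)]$, the last equality coming from $\xi_T$ being continuously distributed (as used in the proof of Theorem \ref{thm:optimal wealth}).

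The decisive step is to show that the portfolio $\pmb{\pi}_t^*$ of Theorem \ref{optimal portfolio} is genuinely incomplete-market admissible and yields terminal wealth $X_T^*$; this is where the lemma's hypothesis enters. Applying the martingale representation theorem to the $\mathbb{P}$-martingale $M_t:=\mathbb{E}[\xi_T X_T^*\mid\mathcal{F}_t]$ produces $M_T=x_0+\int_0^T \pmb{\phi}_s^\intercal d\mathbf{W}_s$ for some adapted $\pmb{\phi}$. Replication by a portfolio in only the $m$ real assets amounts to $\pmb{\phi}_s/\xi_s+X_s^*\pmb{\theta}_s$ lying in $\mathrm{range}(\pmb{\sigma}_s^\intercal)$; since $\pmb{\theta}_s\in\mathrm{range}(\pmb{\sigma}_s^\intercal)$ by Eq.~\eqref{eq:chosen theta}, this reduces to $\pmb{\phi}_s\perp\ker(\pmb{\sigma}_s)$ $\mathbb{P}\otimes dt$-a.e. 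For each admissible $\pmb{\nu}$, the orthogonality $\pmb{\theta}_s\perp\pmb{\nu}_s$ yields the factorization $\xi_t^{\pmb{\nu}}=\xi_tZ_t^{\pmb{\nu}}$ with $Z_t^{\pmb{\nu}}:=\exp\bigl\{-\tfrac12\int_0^t\|\pmb{\nu}_s\|_2^2\,ds-\int_0^t\pmb{\nu}_s^\intercal d\mathbf{W}_s\bigr\}$, and Itô's product rule on $M_tZ_t^{\pmb{\nu}}$ leaves the cross-variation residue $-Z_t^{\pmb{\nu}}\pmb{\phi}_t^\intercal\pmb{\nu}_t\,dt$. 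Taking expectations then gives
\begin{equation*}
\mathbb{E}[\xi_T^{\pmb{\nu}}X_T^*]\,=\,x_0\,-\,\mathbb{E}\!\left[\int_0^T Z_s^{\pmb{\nu}}\pmb{\phi}_s^\intercal\pmb{\nu}_s\,ds\right].
\end{equation*}
The hypothesis $\mathbb{E}[\xi_T^{\pmb{\nu}}X_T^*]\le x_0$ forces the right-hand expectation to be non-negative for every $\pmb{\nu}_s\in\ker(\pmb{\sigma}_s)$; rescaling $\pmb{\nu}\mapsto\lambda\pmb{\nu}$, sending $\lambda\to 0^+$ after dividing by $\lambda$, and applying the same argument with $-\pmb{\nu}$ linearizes this to $\mathbb{E}[\int_0^T \pmb{\phi}_s^\intercal\pmb{\nu}_s\,ds]=0$ for every admissible $\pmb{\nu}$, which delivers the required orthogonality. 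Reading off $\pmb{\pi}_s=(\pmb{\sigma}_s\pmb{\sigma}_s^\intercal)^{-1}\pmb{\sigma}_s(\pmb{\phi}_s/\xi_s+X_s^*\pmb{\theta}_s)$ then recovers the portfolio of Theorem \ref{optimal portfolio} and closes the argument.

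The main obstacle will be turning the Itô-product identity into a genuine expectation equation (not merely a local-martingale one) and justifying the $\lambda\to 0$ interchange inside the expectation, because the PSAHARA setting allows $X_T^*$ to be unbounded on both sides of the real line. This will require uniform $L^p$ control of $\xi_T^{\pmb{\nu}}X_T^*$ across small perturbations $\pmb{\nu}$, plus a Novikov-type condition making $Z^{\pmb{\nu}}$ a true martingale; the standard workaround is to first prove the statement for bounded kernel-valued $\pmb{\nu}$ and then pass to the limit.
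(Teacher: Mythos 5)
Your proposal is correct in outline, but it takes a genuinely different route from the paper: the paper does not prove this lemma at all, it simply refers the reader to Theorem 9.3 of \cite{KLSX1991} and adds the single observation that the positivity of the domain assumed there is not actually used, so the result transfers to PSAHARA utilities on $\R$. What you have done is reconstruct the underlying attainability argument from scratch: the martingale representation $M_T = x_0 + \int_0^T\pmb{\phi}_s^\intercal\,\d \mathbf{W}_s$, the reduction of replicability by the $m$ real assets to $\pmb{\phi}_s\perp\ker(\pmb{\sigma}_s)$, and the extraction of this orthogonality from the family of budget inequalities $\E\left[\xi_T^{\pmb{\nu}}X_T^*\right]\le x_0$ by the rescaling $\pmb{\nu}\mapsto\pm\lambda\pmb{\nu}$, $\lambda\to0^+$. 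This is essentially the content of the hedging/duality machinery inside \cite{KLSX1991}, so it is not a new theorem, but your write-up buys two things the paper's citation does not: it shows exactly where the hypothesis \eqref{eq:financiability} enters (it is precisely a first-order optimality condition forcing the kernel component of $\pmb{\phi}$ to vanish), and it surfaces the technical debts honestly --- the local martingale in the It\^o product must be a true martingale, and the $\lambda\to0$ limit must be interchanged with the expectation, neither of which is automatic when $X_T^*$ is unbounded on both sides of $\R$. One caveat you share with the paper rather than resolve: your upper-bound step uses $\E\left[\xi_T X_T^{\pmb{\pi}}\right]\le x_0$ for every incomplete-market admissible $\pmb{\pi}$, which for wealth unbounded below does not follow from the paper's admissibility condition $\int_0^T\|\pmb{\pi}_t\|_2^2\,\d t<\infty$ a.s.\ alone (the deflated wealth is only a local martingale); the paper inherits the same issue by importing the KLSX statement outside its original positive-wealth setting.
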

The proof is referred to Theorem 9.3 in \cite{KLSX1991}. In the original context, the domain of the utility function $U$ is assumed to be $\R_+$. However, we note the the proof of Theorem 9.3 is not based on this assumption. Hence, the theorem can be directly applied to the PSAHARA utilities.

We explore further to find the sufficient condition that Eq. \eqref{eq:financiability} holds.
\begin{proposition}\label{prop:developed optimal condition}
Under the assumption given by Eq. \eqref{eq:Novikov condition}, then $\{\pmb{\pi}_t^*\}_{0\leq t\leq T}$ in Theorem \ref{optimal portfolio} is the optimal portfolio in the incomplete market, i.e.,
\begin{equation}
\mathbb{E} \[ U\( X_T^* \) \] \geq \mathbb{E} \[ U\( X_T^{\pmb{\nu}} \) \],
\end{equation}
for any $ \{ \pmb{\nu}_t \}_{0\leq t \leq T} \in \ker \(\pmb{\sigma}_t\) $.
\end{proposition}

\begin{proof}
We only need to show that if $\{\pmb{\nu}_t\}_{0\leq t \leq T}$ satisfies Eq. \eqref{eq:Novikov condition}, then $\mathbb{E}\[ \xi_T^{\pmb{\nu}} X_T^*\] \leq x_0$.
Define the new probability measure $\hat{\p}$, with
\begin{equation}\label{eq:RN derivative}
\frac{\d \hat{\p} }{\d \p} = \exp \left\{ -\int_0^T \pmb{\nu}_s^\intercal \, \d \mathbf{W}_s - \frac12 \int_0^T \| \pmb{\nu}_s \|_2^2 \, \d s \right\} = \frac{\xi_T^{\pmb{\nu}}}{\xi_T}.
\end{equation}
We see that if $\{\pmb{\nu}_t\}_{0\leq t\leq T}$ satisfies Eq. \eqref{eq:Novikov condition}, $\hat{\p}$ is a well-defined probability measure and $\frac{\d \hat{\p}}{\d \p}$ is the Radon--Nikodym derivative. By Girsanov's Theorem, the components of Brownian motion $\{\hat{\mathbf{W}}_t\}_{0\leq t\leq T} $ under $\hat{\p}$ is 
\begin{equation}
\hat{W}_{i,t} = W_{i,t} + \int_0^t \nu_{i,s} \, \d s,\quad i = 1, \ldots, q.
\end{equation}
Moreover, since $\pmb{\theta}_t$ is orthogonal to $\pmb{\nu}_t$ for all $t \in [0,T]$, we have
{\small
\begin{align}
\xi_T &= \exp \left\{ -\int_0^T \( r_s + \frac12 \| \pmb{\theta}_s \|_2^2 \) \, \d s - \int_0^T \pmb{\theta}_s^\intercal \, \d \mathbf{W}_s \right\}\nonumber\\
&= \exp \left\{ -\int_0^T \( r_s + \frac12 \| \pmb{\theta}_s \|_2^2 \) \, \d s +\int_t^T \pmb{\theta}_s^\intercal \pmb{\nu}_s \d s - \int_0^T \pmb{\theta}_s^\intercal \, \d \mathbf{\hat{W}}_s \right\}\nonumber\\
&= \exp \left\{ -\int_0^T \( r_s + \frac12 \| \pmb{\theta}_s \|_2^2 \) \, \d s - \int_0^T \pmb{\theta}_s^\intercal \, \d \mathbf{\hat{W}}_s \right\}.
\end{align}
}This shows that $\xi_T$ has the same distribution under $\p$ and $\hat{\p}$. Note that $X_T^* = I \(y^*\xi_T\)$ is a function of $\xi_T$. Define $h(z) := z I\(y^* z\)$. Clearly, we have $\E\[h(\xi_T)\] = \hat{\E}\[h(\xi_T) \] $, where $\hat{\E}$ is the expectation under $\hat{\p}$. 
Combining with \eqref{eq:RN derivative}, it directly follows that
\begin{equation}
\mathbb{E} \[ \xi_T^{\pmb{\nu}}X_T^* \] =\mathbb{\hat{E}} \[ \xi_T X_T^* \]= \mathbb{E} \[\xi_T X_T^*\] = x_0.
\end{equation}
\end{proof}
Lemma \ref{lem:optimal condition in KLSX1991} and Proposition \ref{prop:developed optimal condition} show that if Eq. \eqref{eq:Novikov condition} holds, the optimality of $\{\pmb{\pi}_t^*\}_{0\leq t\leq T}$ in Theorem \ref{optimal portfolio} holds in incomplete markets.

\section{Analysis on the Optimal Processes}\label{sec:analysis}
In this section, we conduct an asymptotic analysis and a numerical analysis on the optimal wealth process and the optimal portfolio given in Section \ref{sec:optimal portfolio} to show some characteristics of these processes.
\subsection{Asymptotic Analysis}\label{asymptotic analysis}
We conduct the asymptotic analysis for PSAHARA utilities to illustrate the economic insight in Theorem \ref{optimal portfolio}; see also \cite{LL2024}. Since $\xi_t$ indicates the market state (the smaller $\xi_t$, the better the market; see \cite{C2000}), we can study the risk-taking behaviors of the portfolio by asymptotic analysis on $\xi_t$. One can apply Theorem \ref{thm:asymptotic analysis} directly to any PSAHARA utility. The proof is stated in Appendix \ref{proof of aymptotic analysis}.
\begin{theorem}\label{thm:asymptotic analysis}
Suppose the settings in Theorem \ref{optimal portfolio} hold. For fixed $t \in [0, T)$ and $y^* \in (0, \infty)$,
\begin{enumerate}
    \item [(a)] as $\xi_t \rightarrow 0$, we have
    \begin{equation}
    X_t^* \rightarrow \infty, \quad \frac{\pmb{\pi}_t^*}{X_t^*} \rightarrow \frac{1}{\alpha_n} \pmb{\sigma}_t^\intercal \( \pmb{\sigma}_t \pmb{\sigma}_t^\intercal \)^{-1} \( \pmb{\mu}_t - r_t \mathbf{1}_m \);
    \end{equation}
    \item [(b)] as $\xi_t \rightarrow \infty$, we have
    \begin{equation}
    X_t^* \rightarrow -\infty, \quad \frac{\pmb{\pi}_t^*}{X_t^*} \rightarrow -\frac{1}{\alpha_0} \pmb{\sigma}_t^\intercal \( \pmb{\sigma}_t \pmb{\sigma}_t^\intercal \)^{-1} \( \pmb{\mu}_t - r_t \mathbf{1}_m \).
    \end{equation}
\end{enumerate}
The details of the asymptotic analysis of each term are given in Tables \ref{asymptotic table X}--\ref{asymptotic table pi}.
\end{theorem}

\begin{table}[h!]
\centering
\begin{tabular}{|c|c|c|c|c|c|}
\hline
\; & $X_t^*$ & $X_t^R$ & $X_t^{\bar{R}}$ & $X_t^B$ & $X_t^D$\\
\hline
$\xi_t \rightarrow 0$ & $\infty $&$\infty$ & 0 & $d_n e^{-\int_t^T r_s \, \d s} $ & 0\\
\hline
$\xi_t \rightarrow \infty$ & $-\infty$ & 0 & $-\infty$ & $d_0 e^{-\int_t^T r_s \, \d s} $ & 0\\
\hline
\end{tabular}
\caption{Asymptotic analysis for $X_t^*$.}\label{asymptotic table X}
\end{table}

\begin{table}[h!]
\centering
\begin{tabular}{|c|c|c|c|c|c|}
\hline
\; & $\pmb{\pi}_t^*$ & $\frac{\pmb{\pi}_t^*}{X_t^*}$& $\pmb{\pi}_t^{(2)}$ & $\pmb{\pi}_t^{(3)}$ & $\pmb{\pi}_t^{(4)} $  \\
\hline
$\xi_t \rightarrow 0$ & $\pmb{\infty}$ & $\frac{1}{\alpha_n} \pmb{\sigma}_t^\intercal \( \pmb{\sigma}_t \pmb{\sigma}_t^\intercal \)^{-1} \( \pmb{\mu}_t - r_t \mathbf{1}_m \)$& $\mathbf{0}$ & $\mathbf{0}$ & $\mathbf{0} $ \\
\hline
$\xi_t \rightarrow \infty$ & $\pmb{\infty}$ & $-\frac{1}{\alpha_0} \pmb{\sigma}_t^\intercal \( \pmb{\sigma}_t \pmb{\sigma}_t^\intercal \)^{-1} \( \pmb{\mu}_t - r_t \mathbf{1}_m \) $& $\mathbf{0}$ & $\mathbf{0}$& $\mathbf{0}$ \\
\hline
\end{tabular}
\caption{Asymptotic analysis for $\pi_t^*$}\label{asymptotic table pi}
\end{table}
As shown in Table \ref{asymptotic table X}, the behaviors of $X_t^*$ provide a clear insight into the impact of market states on the risk preferences of investors. In the scenario where $\xi_t \rightarrow 0$, indicating an extremely favorable market state, the positive risk-seeking term $X_t^R$ drives the optimal wealth $X_t^*$ to approach infinity and the negative risk-seeking term $X_t^{\bar{R}}$ tends to zero. This reflects an investor's inclination towards risk exposure supported by the optimistic prediction of the market. The loss aversion term $X_t^B$ is given by $d_n e^{-r(T-t)}$, signifying that even in favorable states, there is a potential of loss aversion regarding the threshold level. The term $X_t^D$ remains zero, highlighting that in highly favorable states, the impact of non-differentiable points vanishes.

Conversely, when $\xi_t \rightarrow \infty$, symbolizing an extremely unfavorable market state, $X_t^{\bar{R}}$ drives $X_t^*$ to negative infinity while $X_t^R$ remains at zero. Under such a scenario, the loss aversion term $X_t^B$ tends to $d_0 e^{-r(T-t)}$. Similarly as above, the non-differentiable points do not affect the optimal wealth in such an extreme scenario, whereas the first threshold level $d_0$ results in a positive $X_t^B$.

Shifting focus to the dynamics of $\pmb{\pi}_t^*$ in Table \ref{asymptotic table pi}, we observe that the optimal investment strategy $\pmb{\pi}_t^*$ tends towards infinity in both extreme scenarios. The ratio $\frac{\pmb{\pi}_t^*}{X_t^*}$ transitions from $\frac{1}{\alpha_n}\pmb{\sigma}_t^\intercal \( \pmb{\sigma}_t \pmb{\sigma}_t^\intercal \)^{-1} \( \pmb{\mu}_t - r_t \mathbf{1}_m \)$ in a favorable state to $-\frac{1}{\alpha_0}\pmb{\sigma}_t^\intercal \( \pmb{\sigma}_t \pmb{\sigma}_t^\intercal \)^{-1} \( \pmb{\mu}_t - r_t \mathbf{1}_m \)$ in an unfavorable state, illustrating risk-seeking behaviors in both favorable and unfavorable market states. Also, both terms coincide with the structure of Merton strategy of CRRA utilities as shown in \cite{M1969}. 
The terms $\pmb{\pi}_t^{(2)}$, $\pmb{\pi}_t^{(3)}$, and $\pmb{\pi}_t^{(4)}$ remain $\mathbf{0}$, indicating that these factors have little influence on the optimal portfolio under the asymptotic scenarios.
\subsection{Numerical Analysis}\label{numerical}
\begin{figure}[h!]
\centering
\includegraphics[width=0.49\linewidth]{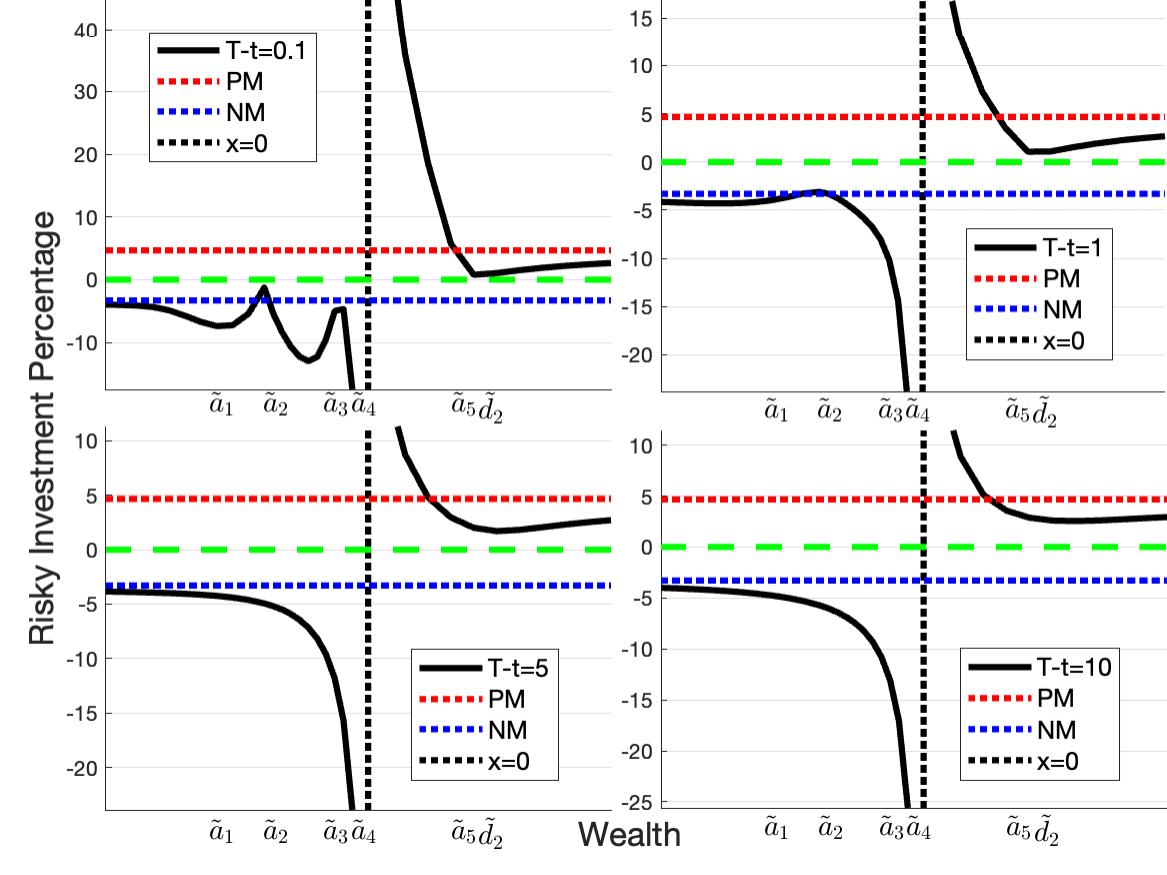}
\includegraphics[width=0.49\linewidth]{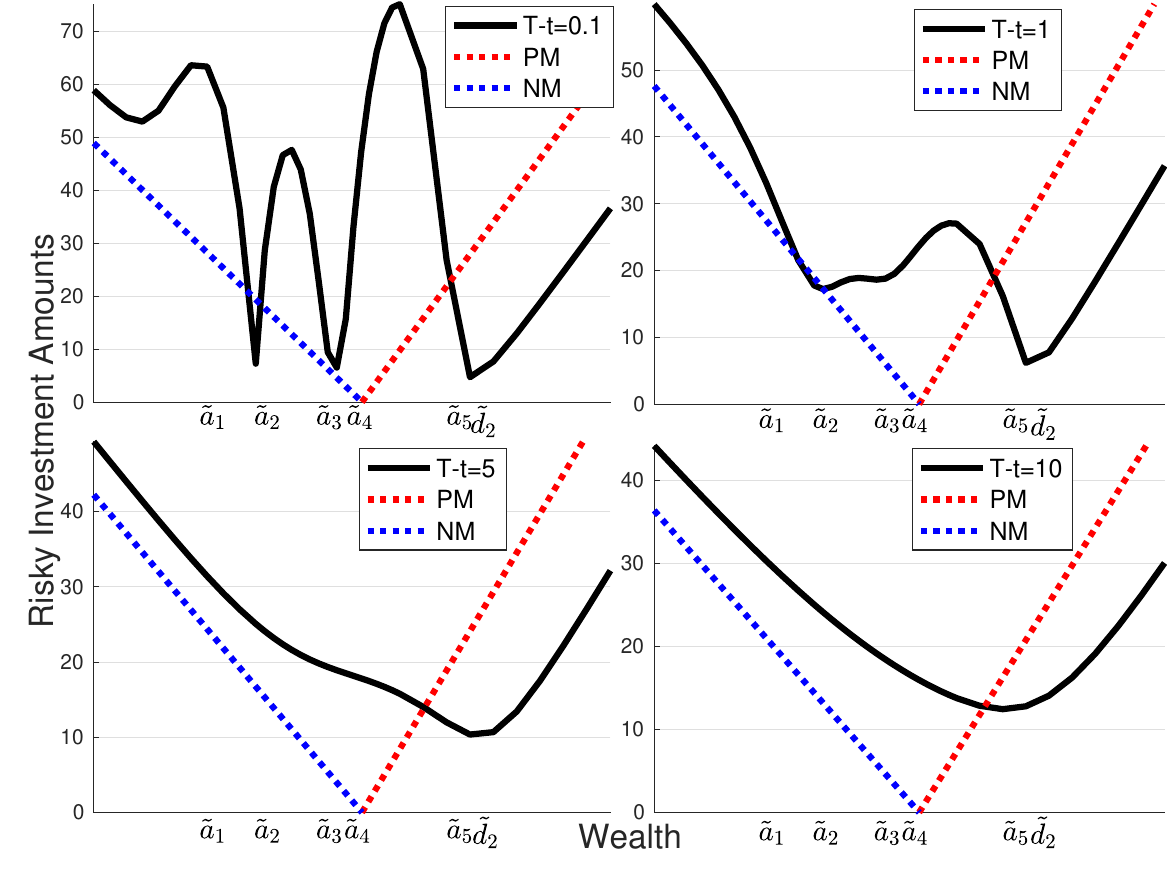}
\caption{Dynamics of $\frac{\pi_t^*}{X_t^*}$ and $\pi_t^*$ of the utility in Example \ref{eg:main example} with respect to the optimal wealth process $X_t^*$. $\Tilde{a}_i := a_i e^{-r(T-t)}, i= 1,\ldots,5, \Tilde{d}_2 := d_2 e^{-r(T-t)}. $ The positive Merton (PM in figures) line is $\pi_t^*/X_t^* = \frac{\theta}{\alpha_n\sigma}$ and the negative Merton (NM in figures) line is $\pi_t^*/X_t^* = - \frac{\theta}{\alpha_0 \sigma} $. The risky investment percentage tends to $\infty$ on the right hand side of the point $x = 0$ and to $-\infty$ on the left since the optimal portfolio remains positive for all $x \in \R$. $d_0, d_1$ lie on the linear segment $(a_4,a_5)$ and are hence omitted.}\label{fig:dynamics}
\end{figure}
In the section, we assume that the market consists of a risk-free asset and a risky asset driven by a one-dimensional Brownian motion with constant market parameters $\mu = 0.086, \sigma = 0.1, r = 0.03$. We delve into the dynamics of both the optimal risky investment percentage $\pi_t^*/X_t^*$ and the optimal portfolio $\pi_t^*$ of the utility function in Example \ref{eg:main example}. 

We mainly analyze the left graph at $T-t = 0.1$ to study the influence of linear segments $[a_1,a_2], [a_2,a_3], [a_4, a_5]$ and non-differentiable points $a_2$ and $a_4$ on the optimal portfolio. It is evident from the analysis that non-differentiable points invariably cause a decrease in the optimal risky investment percentage. Conversely, linear sections all lead to a significant increase in the optimal portfolio. Furthermore, at all tangent points ($a_3$ and $a_5$) where the graph transitions from linearity to strictly concave segments, we always observe a pattern of decline from previously elevated levels of the risky investment.

Comparing the left figures in Figure \ref{fig:dynamics} in terms of time, we see that the risky investment percentage tends to a stable ``hyperbola'' shape on the whole real line and an ``uptick'' shape on the positive wealth. To the contrast, works of \cite{C2000}, \cite{HJ2007} and \cite{LLMV2024} show that the incentive compensation induces a ``peak-valley'' pattern when the preference is PHARA. The common feature of PSAHARA and PHARA is that the non-concave and non-differentiable aspects are caused by an incentive contract, and as the contract approaches its maturity, the manager pursues more eagerly the high-profit incentive by increasing the risky investment, as shown in the subfigures of $T-t=0.1$.

To explore the effects of threshold levels on the optimal portfolio, we observe that the threshold levels $d_0$ and $d_1$ fall within the linear sections of the concave envelope, whereas $d_2$ lies exclusively on the strictly concave segment over the interval $(a_5,\infty)$. Intriguingly, the optimal portfolio exhibits symmetric behaviors around $d_2$ when $T-t$ is large.
The manager's risky investment increases as the fund value deviates from the threshold level in either directions.

Finally, we observe that the optimal investment amount $\pi_t^*$ is always strictly positive on $\R$, implying that the PSAHARA portfolio is generally very risk-seeking. This results in the extreme risk-taking behaviors of optimal investment percentage around $0$, which can be regarded as a tremendous leverage on risky investment in practice. We see from the left panel that the risky investment percentage tends to $\infty$ as $X_t^* \rightarrow 0^+$ and $-\infty$ as $X_t^* \rightarrow 0^-$. Hence, we conclude that a manager with PSAHARA utility become extremely risk-seeking when the wealth levels approach $0$.

\section{Application: Motivating Example Revisiting}\label{sec:application}
In this section, we apply our findings to the motivating example in Section \ref{sec:motivation}. Recall that this example reflects non-monotone risk aversion (SAHARA preference) and convex compensation (incentive contracts) in hedge funds.
\subsection{Optimal Portfolio}\label{sec:app opt portfolio}
\begin{figure}[h!]
    \begin{minipage}{0.48\textwidth}
    \centering
    \includegraphics[width=0.8\textwidth]{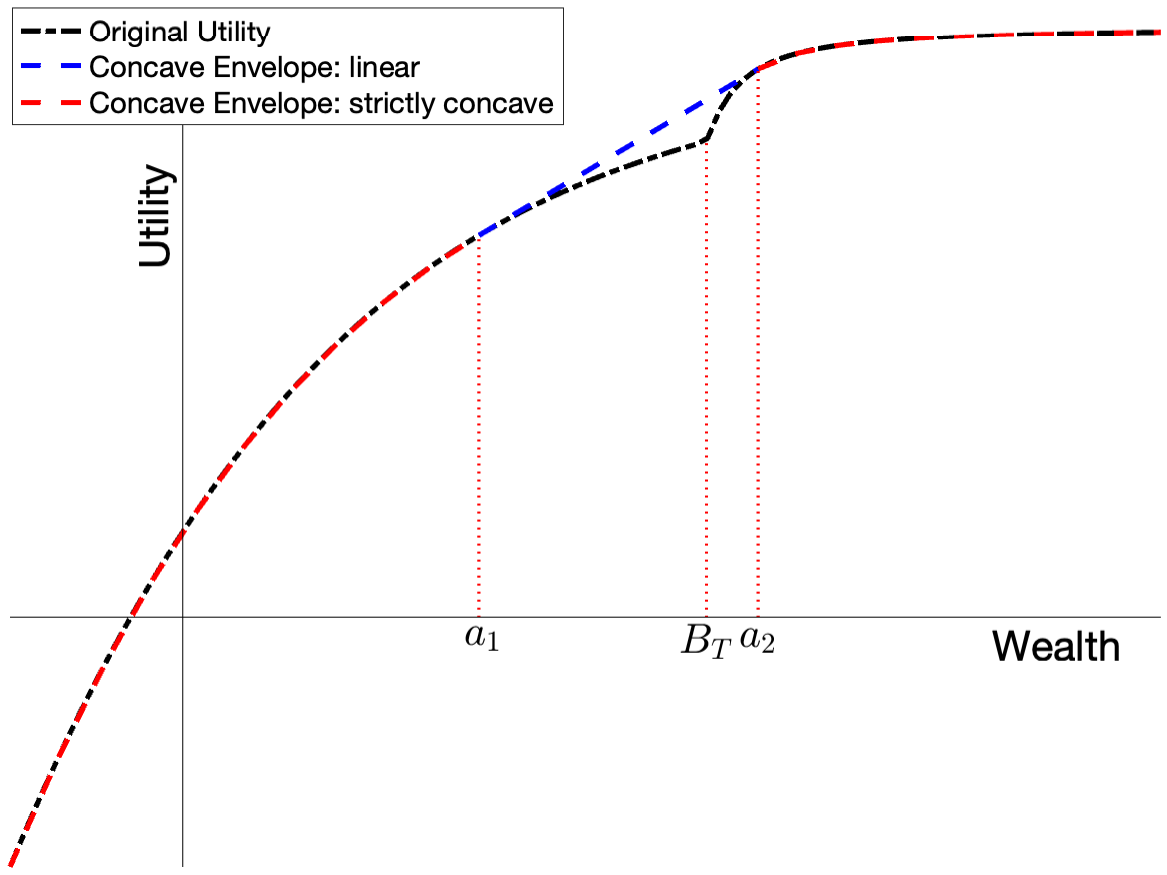}
    \end{minipage}
    \hspace{1em plus 1fill}
    \begin{minipage}{0.48\textwidth}
    \begin{equation*}
    U^{**}\(x\) := \left\{
    \begin{aligned}
    & \frac{1}{0.02} \hat{U}\(x; 2, \frac{1}{0.02^2}, 0\) + u_0, && x < a_1;\\
    & \frac{1}{0.02} \hat{U}'\(a_1; 2, \frac{1}{0.02^2},0 \) \( x - a_1\)\\
    &\quad + \hat{U}\(a_1; 2, \frac{1}{0.02^2}, 0\), && a_1 \leq x < a_2;\\
    & \frac{1}{0.22} \hat{U}\(x; 2, \frac{1}{0.22^2}, \frac{0.2B_T}{0.22}\) + u_2, && x \geq a_2.\\
    \end{aligned}
    \right.
    \end{equation*}
    \end{minipage}
    \caption{The original utility \eqref{eq:incentive utility} and the concave envelope. In the figure, the red parts are the segments where the concave envelope coincides with the original utility, while the blue straight lines are the segments where the concave envelope does not coincide with the original utility. $\hat{U}$ is the utility function shown in \eqref{basic_U} and $\hat{U}'$ is its derivative shown in \eqref{eq:derivative}. $a_1, a_2$ are the tangent points between linear segments and strictly concave parts. $u_1, u_2$ denote the correction constants to make the utility function continuous.}
    \label{fig:incentive envelope}
\end{figure}
By treating the composed utility \eqref{eq:incentive utility} as a special case of the PSAHARA utility, we can apply Theorems \ref{thm:optimal wealth} and \ref{optimal portfolio} to find the corresponding optimal wealth process and the optimal portfolio.
To better illustrate the example, we set $w = 0.2, v=0.02, X_0 = 1, B_T = X_0 e^{\int_0^T r_s \, \d s}$ in \eqref{eq:incentive contract}, where $X_0$ represents the initial fund value. We let the utility parameters be $\alpha = 2, \beta = 1, d = 0$.
In this case, the concave envelope of the composed utility is shown in Figure \ref{fig:incentive envelope}.

Moreover, we state the following theorem of the optimal portfolio of the composed utility \eqref{eq:incentive utility}.
\begin{theorem}\label{thm:incentive portfolio}
Define the portfolio selection problem in hedge funds as 
\begin{equation}\label{eq:incentive problem}
    \max_{\pmb{\pi} \in \mathcal{V}} \E \[ U\(X_T\) \],
\end{equation}
where $U := \hat{U} \circ \Theta$ is given by Eqs. \eqref{eq:simple utility} and \eqref{eq:incentive utility}.
The optimal investment strategy $\{\pmb{\pi}_t^*\}_{0 \leq t \leq T}$ for Problem \eqref{eq:incentive problem} is given by
\begin{equation}\label{eq:incentive strategy}
\pmb{\pi}_t^* =\hat{\pmb{\pi}}_t^{(1)} + \hat{\pmb{\pi}}_t^{(2)} + \hat{\pmb{\pi}}_t^{(3)},
\end{equation}
where
{\small
\begin{align}\label{eq:incentive portfolio}
\hat{\pmb{\pi}}_t^{(1)} =& \( \pmb{\sigma}_t\pmb{\sigma}_t^\intercal \)^{-1} \( \pmb{\mu}_t - r_t \mathbf{1}_m \)\frac{1}{\alpha}\( \sqrt{ \( X_{t,1}^R + X_{t,1}^{\bar{R}} \)^2 + b_{t,1}} + \sqrt{ \(X_{t,2}^R + X_{t,2}^{\bar{R}} \)^2 + b_{t,2} } \),\nonumber\\
\hat{\pmb{\pi}}_t^{(2)} =& -\frac{\( \pmb{\sigma}_t\pmb{\sigma}_t^\intercal \)^{-1} \( \pmb{\mu}_t - r_t \mathbf{1}_m \)}{2\sqrt{\int_t^T \| \pmb{\theta}_s \|_2^2\,\d s}}\left\{ e^{\(-1 + \frac{1}{\alpha} \)\int_t^T \(r_s + \frac{1}{\alpha}\|\pmb{\theta}_s\|_2^2 \)\,\d s } \[\( \frac{v^{1-\alpha} }{y^* \xi_t} \)^{\frac{1}{\alpha}}\Phi'\( g_{1}\( \frac{ U'\(a_1^-\) }{y^* \xi_t}\)\)\right.\right.\nonumber\\
&\left.-\( \frac{\(w+v\)^{1-\alpha} }{y^* \xi_t} \)^{\frac{1}{\alpha}} \Phi'\( g_{1}\( \frac{ U'\(a_2^+\)}{y^*\xi_t} \) \) \] - e^{\(-1 - \frac{1}{\alpha} \)\int_t^T \(r_s - \frac{1}{\alpha}\|\pmb{\theta}_s\|_2^2 \)\,\d s } \[ \( \frac{v^{1-\alpha} }{y^* \xi_t} \)^{-\frac{1}{\alpha}}\right.\nonumber\\
&\left.\left. \times \frac{\beta^2}{v^2} \Phi'\( g_{2}\( \frac{ U'\(a_1^-\) }{y^* \xi_t}\)\)- \( \frac{\(w+v\)^{1-\alpha} }{y^* \xi_t} \)^{-\frac{1}{\alpha}}\frac{\beta^2}{\(w+v\)^2} \Phi'\( g_{2}\( \frac{ U'\(a_2^+\)}{y^*\xi_t} \) \) \] \right\}, \nonumber\\
\hat{\pmb{\pi}}_t^{(3)} =& -\frac{\( \pmb{\sigma}_t\pmb{\sigma}_t^\intercal \)^{-1} \( \pmb{\mu}_t - r_t \mathbf{1}_m \)}{\sqrt{\int_t^T \| \pmb{\theta}_s \|_2^2\,\d s}} e^{-\int_t^T r_s \, \d s}\[\frac{d}{v}\Phi'\( g_{0}\( \frac{ U'\(a_1^-\) }{y^* \xi_t}\)\) -\frac{wB_T+d}{w+v}\Phi'\( g_{0}\( \frac{ U'\(a_2^+\)}{y^*\xi_t} \) \) \],
\end{align}}with $y^*$ given in Eq. \eqref{eq:Lagrange multiplier} and
\begin{equation}
g_1\(z\) := g_0\(z\) - \frac{1}{\alpha}\sqrt{\int_t^T \| \pmb{\theta}_s\|_2^2\, \d s}, \quad g_2\(z\) := g_0\(z\) + \frac{1}{\alpha}\sqrt{\int_t^T \| \pmb{\theta}_s\|_2^2\, \d s}.
\end{equation}
\end{theorem}
Note that the concave envelope shown in Figure \ref{fig:incentive envelope} is differentiable over its entire domain. Hence, there is no first-order risk aversion term in the optimal portfolio \eqref{eq:incentive strategy}. 
\subsection{Numerical Analysis}
Similarly as in Section \ref{numerical}, we can plot the risky investment amounts $\pmb{\pi}_t^*$ and the risky investment percentages $\frac{\pmb{\pi}_t^*}{X_t^*}$. 
\begin{figure}[h!]
\centering
\includegraphics[width=0.49\linewidth]{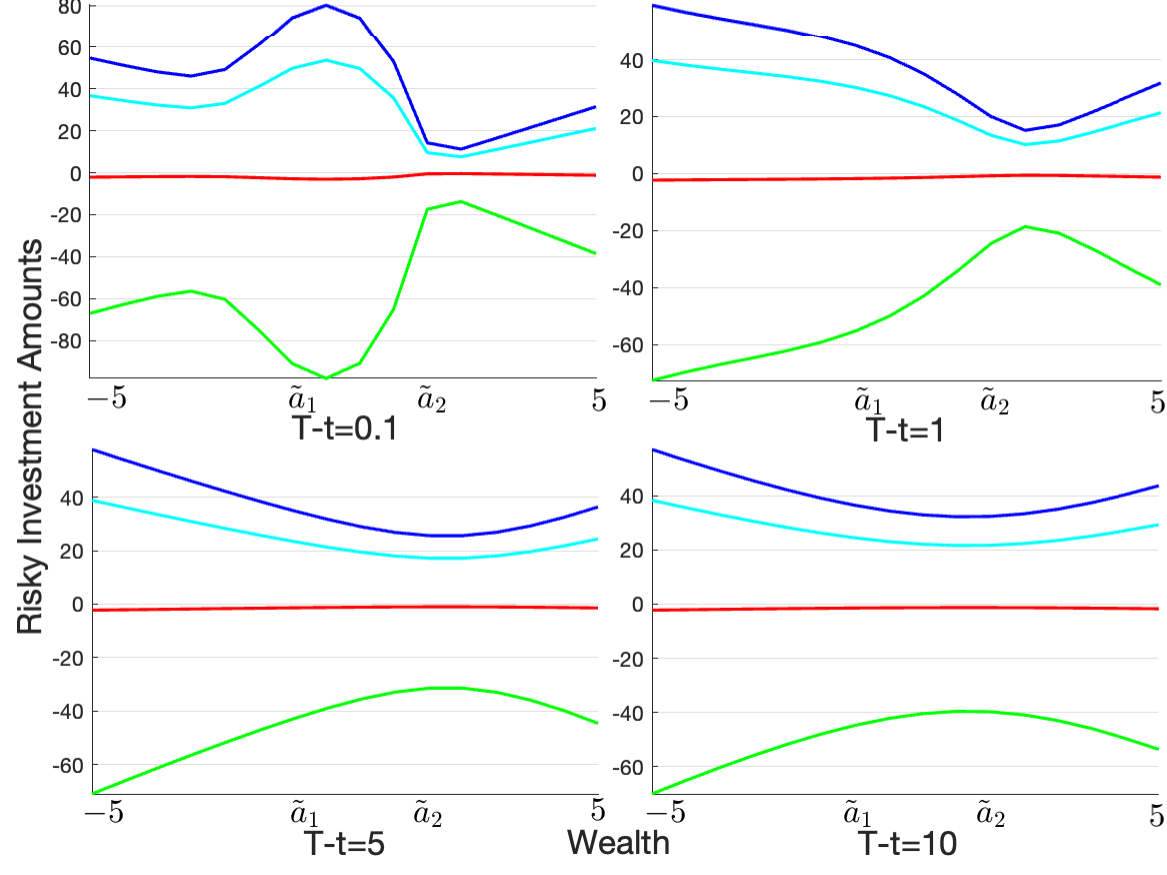}
\includegraphics[width=0.49\linewidth]{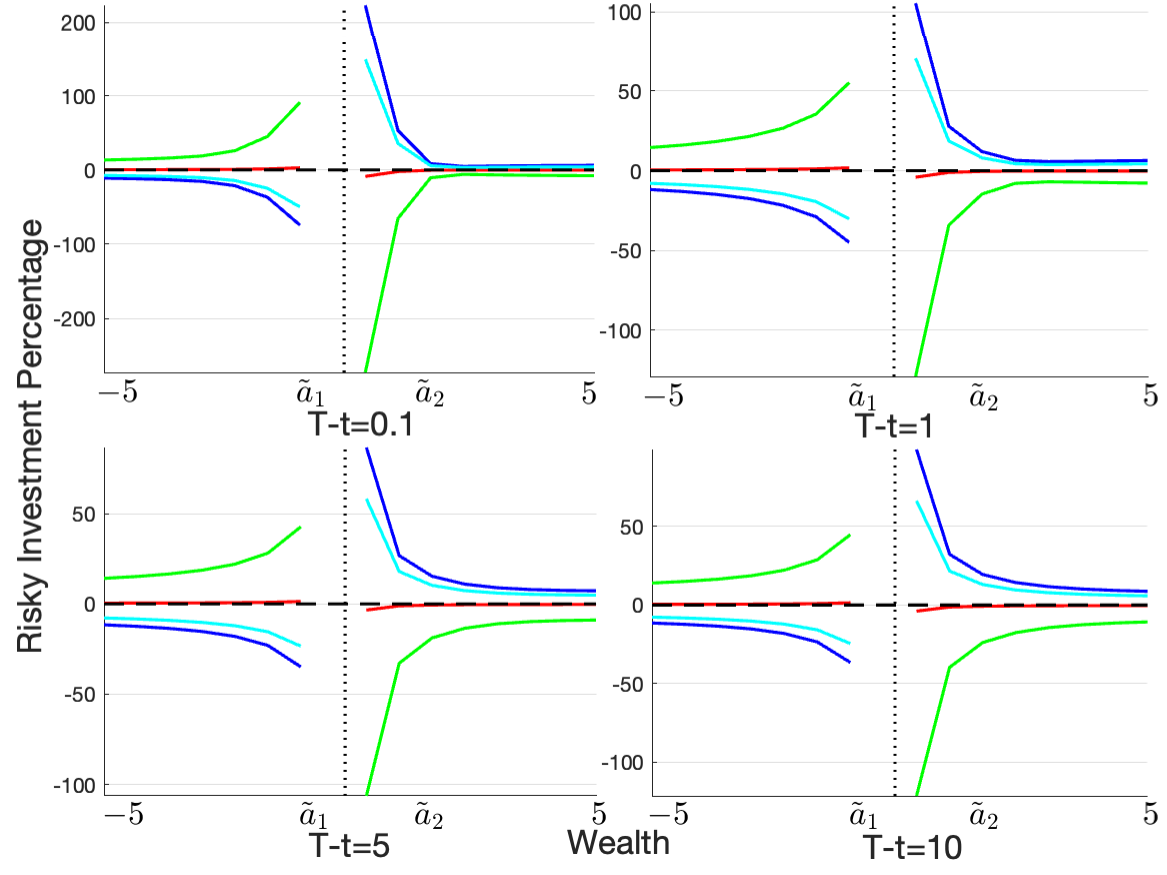}
\caption{Optimal risky investment amounts and percentages at different time, where each color represents one component of the vectors $\Tilde{a}_i :=a_i e^{-\int_t^T r_s\,\d s}, i=1,2$. In the right panel, the graphs around $X_t^* = 0$ are omitted since every component of $\frac{\pmb{\pi}_t^*}{X_t^*}$ tends to $\infty$ or $-\infty$ as $X_t^* \rightarrow 0$.}
\label{fig:incentive amounts}
\end{figure}
We assume that the fund manager operates in a four-dimensional complete market, i.$\;$e., $m = q = 4$. We present the optimal investment amounts $\pmb{\pi}_t^*$ and percentages $\frac{\pmb{\pi}_t^*}{X_t^*}$ for different wealth levels in Figure \ref{fig:incentive amounts}. Note that $[a_1,a_2]$ is the linear segment of the concave envelope. By calculation from the left panel of Figure \ref{fig:incentive amounts}, the total risky investment, i.e., the sum of optimal portfolio $\sum_{i=1}^{4} \pi_{i,t}^*$, remains strictly positive for all $X_t^* \in \R$. Furthermore, similar to the findings in \cite{C2000}, there is an increase in risky investment amounts at the linear part $[a_1,a_2]$ of the domain of $U^{**}$ in the right hand side of Figure \ref{fig:incentive amounts}, followed by a drop at the second tangent point $a_2$. This phenomenon is induced by the incentive scheme. As shown in Figure \ref{fig:incentive envelope}, $B_T$ lies in the interval $[a_1,a_2]$. In fact, $a_2$ plays a role of the benchmark level in practice. When the fund value is close but below the benchmark level, the fund manager invests heavily in risky assets to activate the call option. Upon achieving the benchmark, he employs a ``lock-in'' strategy, i.e., predominantly investing the fund in the risk-free asset to prevent falling below the benchmark level. Hence, the manager is risk-seeking on $[a_1,a_2]$ and is risk-averse on $(a_2, \infty)$.
\subsection{Empirical Study}\label{sec:empirical}
Next, we implement our optimal portfolio strategy on the U.S. stock market. The largest challenge in doing this is to estimate the value of $\{\pmb{\sigma}_t \}_{0 \leq t \leq T}$ due to the volatility ``smile" and ``smirk" phenomena in the market. We include the detailed procedures of estimation in Appendix \ref{appendix:empirical} and directly show the results in this section. We conclude that no matter what estimation method we use for volatility, due to the extreme risk-taking behaviors induced by the PSAHARA portfolio, the empirical results show great volatility, which is emphasized in the Sharpe ratio analysis.

Our analysis spans a ten-year period from March 26, 2014, to March 26, 2024. We choose major market indices—S\&P 500, NASDAQ Composite, Russell 2000, and Dow Jones Industrial Average—as proxies to implement and test the strategy shown in Theorem \ref{thm:incentive portfolio} with the parameters settings in Section \ref{sec:app opt portfolio}. We run 10000 simulations of our trading strategy over the subsequent period, from March 26, 2022, to March 26, 2024. We begin with the analysis of simple return $R_s$, which comes directly from
\begin{equation}
R_s = \frac{X_T-X_0}{X_0},
\end{equation}
with $X_0$ and $X_T$ representing the initial wealth and the terminal wealth, respectively. In the following, we set $X_0 = 1$.
\begin{figure}[t]
\centering
\includegraphics[width=0.45\linewidth]{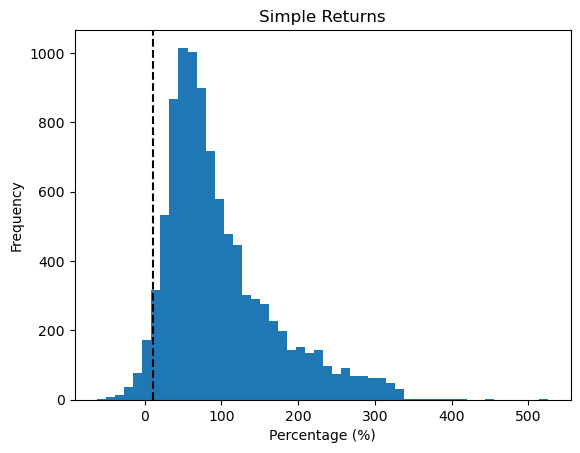}
\includegraphics[width=0.45\linewidth]{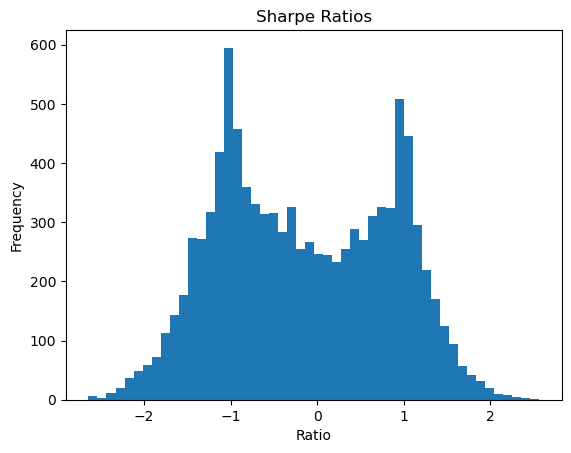}
\caption{Simple returns and Sharpe ratios of the simulations with historical volatility. The dash line means the risk-free rate $R_f$.}
\label{fig:returns and sr}
\end{figure}

As shown in Figure \ref{fig:returns and sr}, we see that the most of simple returns are positive, indicating that the strategy makes profit on average. However, we see a great deviation from the sample mean in the figures. Hence, we introduce the Sharpe ratio to evaluate the risk-adjusted returns of the strategy; see \cite{S1994}. Letting $R_f$ denote the risk-free rate and $\sigma_r$ denote the standard deviation of the difference between daily returns and the risk-free rate, we have that the Sharpe ratio $S_r$ is given by
\begin{equation}\label{eq:sharpe ratio}
S_r = \frac{R_r-R_f}{\sigma_r},
\end{equation}
where $R_r$ is the weighted average of all daily returns. 
\begin{figure}[t]
\centering
\includegraphics[width=0.45\linewidth]{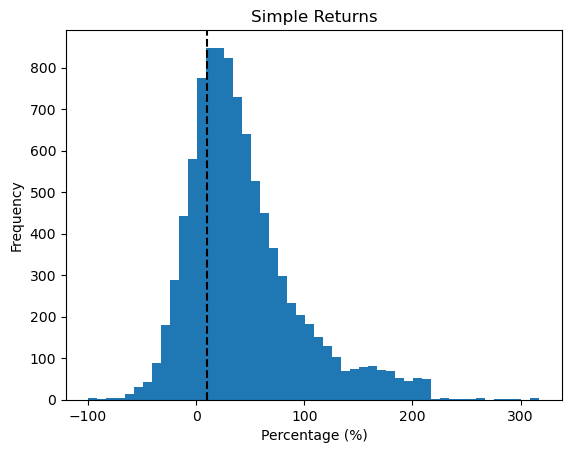}
\includegraphics[width=0.45\linewidth]{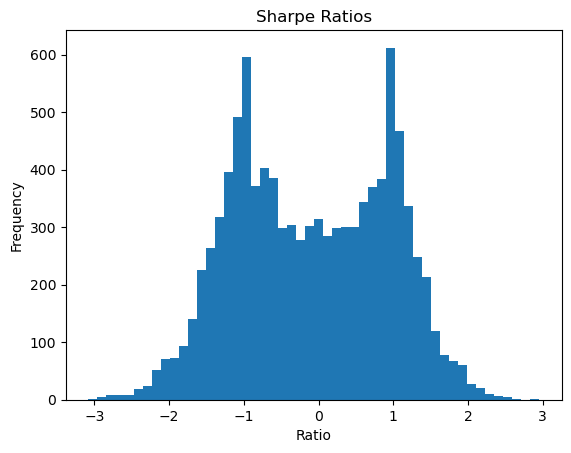}
\caption{Simple returns and Sharpe ratios of the simulations with implied volatility. The dash line means the risk-free rate $R_f$.}
\label{fig:implied volatility}
\end{figure}
\begin{figure}[t]
\centering
\includegraphics[width=0.45\linewidth]{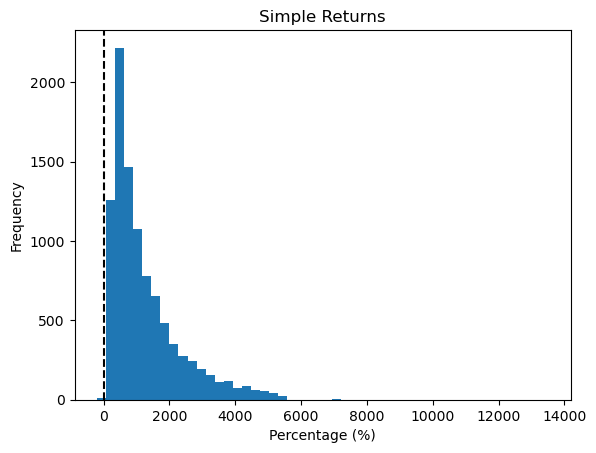}
\includegraphics[width=0.45\linewidth]{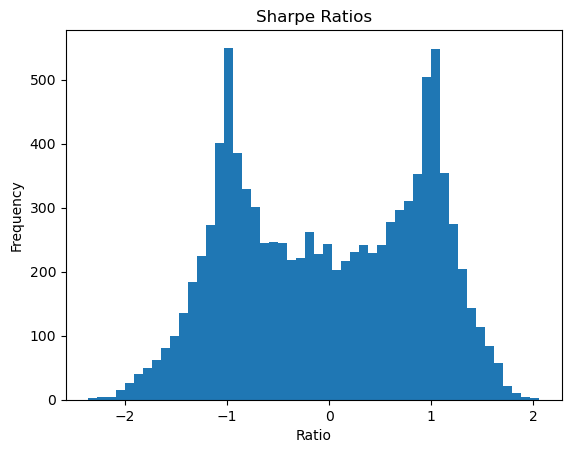}
\caption{Simple returns and Sharpe ratios of the simulations with the maximum likelihood estimator. The dash line means the risk-free rate $R_f$.}
\label{fig:MLE}
\end{figure}
Further, we use the estimation methods of implied volatility in Figure \ref{fig:implied volatility} and of the maximum likelihood estimator (MLE) in Figure \ref{fig:MLE}. The details of estimation are included in Appendix \ref{appendix:empirical}. As shown in Figures \ref{fig:returns and sr}--\ref{fig:MLE}, there is always a two-peak pattern in the Sharpe ratios. This means that the portfolio sometimes makes a high profit and sometimes makes a big loss. Moreover, we see that there exist more than half of the Sharpe ratios lying below 0. The reason is that the PSAHARA portfolio leads to a highly risk-taking behavior and causes a very negative daily return in many scenarios. We show an example of the wealth process in Figure \ref{fig:wealth process}.
\begin{figure}[h!]
    \centering
    \includegraphics[width=0.45\linewidth]{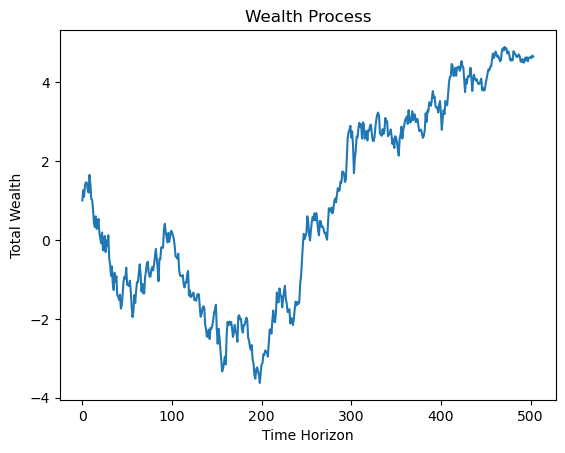}
    \caption{A sample path of wealth process $\{X_t^*\}_{0\leq t\leq T}$. The x-axis represents the trade days.}
    \label{fig:wealth process}
\end{figure}
Though achieving high returns in the end, the fund value drops to almost $-4$ times of its initial value. Particularly, some drops are of so great percentage that they draw the average to negative.

A theoretical interpretation of the Sharpe ratios' two-peak pattern is the ``gambling" behavior induced by the compensation scheme. Practically speaking, the SAHARA manager under the incentive scheme optimizes his objective by investing excessive amounts in the risky assets when the current wealth is below the incentive benchmark. Such behaviors incur a great volatility, resulting in the two-peak pattern shown in the figures. The volatility is so large that though achieving overall positive returns, the Sharpe ratios stay negative. Given this evidence, we conclude that the PSAHARA utility can generate a high return but induce a high volatility.

\vskip 4pt
\noindent
{\bf Acknowledgements.}
{\small Y. Liu acknowledges financial support from the National Natural Science Foundation of China (Grant No. 12401624), The Chinese University of Hong Kong (Shenzhen) research startup fund (Grant No. UDF01003336) and Shenzhen Science and Technology Program (Grant No. RCBS20231211090814028) and is partly supported by the Guangdong Provincial Key Laboratory of Mathematical Foundations for Artificial Intelligence (Grant No. 2023B1212010001). Z. Shen acknowledges financial support from the undergraduate research awards at The Chinese University of Hong Kong (Shenzhen).
The authors are grateful to Gongqiu Zhang and members of the research group on financial mathematics and risk management at The Chinese University of Hong Kong, Shenzhen for their useful feedback and conversations.}


\appendix

\section{\bf Proofs}\label{proofs}
In the proofs, we prove the more complicated case with $\alpha \neq 1$ in the utility family of Eq. \eqref{basic_U}. The log function case ($\alpha = 1$) holds as well.
\subsection{Proof of Proposition \ref{prop:generality}}
The proof of the first part of Proposition \ref{prop:generality} is by direct computation. For a PSAHARA utility $U$,
\begin{equation}
U\(x;\alpha_k,\beta_k,d_k,\gamma_k,u_k\) = \gamma_k \hat{U}\(x;\alpha_k,\beta_k,d_k\) + u_k, \quad x \in \(a_k,a_{k+1}\).
\end{equation}
For an increasing continuous piecewise linear function $h$, we can write $h(x) = Ax + B, \;x \in \mathcal{J}$, where $A > 0, B \in \R$, and $\mathcal{J} \subset \R$ is an interval segment such that $h$ is a linear function on $\mathcal{J}$. Hence, we have
\begin{equation}
\begin{aligned}
U\( Ax+B;\alpha_k, \beta_k,d_k,\gamma_k,u_k \) &= \gamma_k \hat{U}\(Ax+B;\alpha_k,\beta_k,d_k\) + u_k\\
&= A^{1-\alpha_k} \gamma_k \hat{U}\(x;\alpha_k,\frac{\beta_k}{A},\frac{d_k-B}{A}\) + u_k, \quad x \in \(a_k,a_{k+1}\) \cap \mathcal{J},
\end{aligned}
\end{equation}
which is also in the form of the PSAHARA utility. Further, $U\(Ax+B\)$ is continuous since $U\(x\)$ and $\Theta\(x\)$ are both continuous in $x$. Hence, $U\(Ax+B\)$ is a PSAHARA utility.
\subsection{Proof of Proposition \ref{prop:enve}}
The second part of Proposition \ref{prop:enve} is proved by a counter-example in Example \ref{eg:main example}. For the first part of Proposition \ref{prop:enve}, it suffices to prove that on the domain where the concave envelope differs from the original utility, $U^{**}$ is linear. We show this in Lemma \ref{lem:linear enve}, which comes from Lemma 1 in \cite{LLMV2024} and is highly related to Lemma 6.3 of \cite{BS2014}. The proof is referred to Lemma 5.1 of \cite{BC1994}.
\begin{lemma}\label{lem:linear enve}
Suppose that $U$ is continuous. The set $\mathcal{A} := \{ x \in \D : U \(x\) \neq U^{**}\(x\) \} $ is represented as a union of at most countably many disjoint open intervals, and $U^{**}$ is linear on each of the above intervals.
\end{lemma}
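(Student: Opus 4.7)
The plan is to prove the lemma in two distinct steps: first showing that $\mathcal{A}$ is open (which yields the countable disjoint-interval decomposition), and then showing that $U^{**}$ is affine on each connected component. For the first step, since $U$ is continuous by hypothesis and $U^{**}$ is continuous on the interior of $\D$ as a finite concave function on an open convex domain, the difference $U^{**}-U$ is continuous, so $\mathcal{A}=(U^{**}-U)^{-1}((0,\infty))$ is open in $\D$; every open subset of $\R$ is an at-most-countable disjoint union of open intervals, which settles the first assertion.

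For the affinity, I would fix a connected component $(a,b)\subseteq\mathcal{A}$, allowing $a=-\infty$ or $b=+\infty$. By maximality together with continuity, whenever $a$ (resp.\ $b$) is finite and lies in $\D$, one has $U^{**}(a)=U(a)$ (resp.\ $U^{**}(b)=U(b)$). The central tool is the two-point representation of the concave envelope in dimension one,
\begin{equation*}
U^{**}(x_0)=\sup\bigl\{\lambda U(x_1)+(1-\lambda)U(x_2):\ x_1,x_2\in\D,\ \lambda\in[0,1],\ \lambda x_1+(1-\lambda)x_2=x_0\bigr\},
\end{equation*}
which reflects the fact that in $\R^2$ every point of the convex hull of a planar set lies on a chord between two points of the generating set. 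Under the regularity at hand I would argue that this supremum is attained at some triple $(x_1,x_2,\lambda)$ with $x_1\leq x_0\leq x_2$.

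Combining concavity of $U^{**}$ with the pointwise inequality $U^{**}\geq U$ then yields the chain
\begin{equation*}
U^{**}(x_0)\geq \lambda U^{**}(x_1)+(1-\lambda)U^{**}(x_2)\geq \lambda U(x_1)+(1-\lambda)U(x_2)=U^{**}(x_0),
\end{equation*}
which forces equality throughout. In particular $U^{**}(x_i)=U(x_i)$ for $i=1,2$, so $x_1,x_2\notin\mathcal{A}$; maximality of $(a,b)$ as a component then forces $x_1\leq a$ and $x_2\geq b$. The first equality in the chain, combined with concavity, shows that $U^{**}$ is affine on the chord $[x_1,x_2]\supseteq[a,b]$ (a concave function that equals its chord value at an interior point is affine on that chord), which establishes the linearity claim.

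The main technical obstacle I anticipate is the attainment of the supremum in the two-point representation, particularly when $(a,b)$ is unbounded or when $U$ grows slowly at infinity; this is where the precise behavior of the SAHARA building blocks (encoded in $\alpha_k,\beta_k,d_k$) enters. The standard workaround is to work with an approximating sequence $(x_1^{(n)},x_2^{(n)},\lambda_n)$ along the supremum and pass to the limit using continuity of $U$ and $U^{**}$; this is precisely how Lemma~5.1 of \cite{BC1994} is proved, and Lemma~1 of \cite{LLMV2024} uses the same structural fact in the HARA setting.
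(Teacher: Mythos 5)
Your argument is correct, and it is worth noting that the paper itself gives no proof of this lemma at all: it simply refers the reader to Lemma 5.1 of Brighi and Chipot (1994) (noting the kinship with Lemma 1 of Liang et al.\ (2024) and Lemma 6.3 of Bichuch and Sturm (2014)). So you are supplying substance where the paper supplies only a pointer, and the route you take --- openness of $\mathcal{A}$ from continuity of $U^{**}-U$, then the one-dimensional two-point (Carath\'eodory) representation of the envelope plus the equality-forcing chain $U^{**}(x_0)\geq \lambda U^{**}(x_1)+(1-\lambda)U^{**}(x_2)\geq \lambda U(x_1)+(1-\lambda)U(x_2)=U^{**}(x_0)$ --- is indeed the standard proof and matches what the cited source does. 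You have also correctly isolated the only delicate point, namely attainment of the supremum, which can genuinely fail on an unbounded component or when $U$ approaches an affine asymptote; your proposed fix (maximizing sequences plus continuity) works, but if you want to sidestep attainment entirely there is a cleaner local argument: at any $x_0$ with $U^{**}(x_0)>U(x_0)$, continuity gives a neighborhood on which $U^{**}>U+\epsilon$, and if $U^{**}$ were not affine on a small interval around $x_0$ one could replace it there by its chord and obtain a strictly smaller continuous concave majorant of $U$, contradicting the definition of $U^{**}$ as the smallest such majorant; hence $U^{**}$ is locally affine on $\mathcal{A}$ and therefore affine on each connected component. Either way the lemma holds for any continuous $U$ admitting a finite concave envelope; nothing specific to the SAHARA building blocks is needed.
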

As each linear segment is of the SAHARA utility family, the concave envelope $U^{**}$ is a PSAHARA utility.
\subsection{Proof of Theorem \ref{thm:optimal wealth}}\label{sec:proof of optimal wealth}
Following Section \ref{sec:technical discussions}, this subsection is going to show the final step that the terminal wealth $I\(\Y_\xi \(x_0\) \xi_T\)$ is replicable, i.e., $I\(\Y_\xi \(x_0\) \xi_T\) \in \X^{x_0}$. To this end, define the process $\{V_t\}_{0\leq t\leq T }$:
\begin{equation}
V_t := \xi_t^{-1} \E\[ \xi_T I\(\Y_\xi(x_0) \xi_T\) | \F_t \].
\end{equation}
Regarding $V_t$ as a function of $\xi_t$ and applying Itô's formula to $\xi_t V_t$, we have
\begin{equation}\label{eq:dynamics of V}
\begin{aligned}
\d \(\xi_t V_t\) &= \(V_t + \xi_t \frac{\partial V_t}{\partial \xi_t} \)\d \xi_t + \frac12 \(2 \frac{\partial V_t}{\partial \xi_t} + \xi_t \frac{\partial^2 V_t}{\partial \xi_t^2} \)\d \langle \xi\rangle_t\\
&= \(V_t + \xi_t \frac{\partial V_t}{\partial \xi_t} \) \(- \xi_t \pmb{\theta}^\intercal\) \d \mathbf{W}_t + \Box \d t, \quad \forall t \in \[0,T\],
\end{aligned}
\end{equation}
where we do not show the drift term $\Box$ for brevity. 
On the other hand, for a wealth process $\{X_t\}_{0\leq t\leq T} $ controlled by $\{\pmb{\pi}_t\}_{0\leq t\leq T} $, we have
\begin{equation}\label{eq:dynamics of X}
\d \( \xi_t X_t \) = \[ \xi_t \pmb{\pi}_t^\intercal \pmb{\sigma} - \xi_t X_t \pmb{\theta}^\intercal \] \d \mathbf{W}_t.
\end{equation}
Comparing the diffusion coefficients of Eqs. \eqref{eq:dynamics of V} and \eqref{eq:dynamics of X} and letting
\begin{equation}\label{eq:form of optimal control}
    \pmb{\pi}_t = -\(\pmb{\sigma}_t \pmb{\sigma}_t^\intercal \)^{-1} \(\pmb{\mu}_t - r_t \mathbf{1}_m\) \xi_t \frac{\partial V_t}{\partial \xi_t},
\end{equation}
we have $X_t^{\pmb{\pi}} = V_t,\; 0\leq t\leq T$, and can hence replicate the optimal terminal wealth $I(\Y_\xi(x_0) \xi_T)$. Now, we proceed to give the explicit form of the optimal terminal wealth.\\
\noindent (1) Based on the martingale and duality method, the optimal terminal wealth is obtained from 
\begin{equation}
X_T^* = \arg \sup_{x \in \D} \[ U\(x\) - y^* \xi_T x \].
\end{equation}
According to Definition \ref{def:PSAHARA utility}, we solve the problem:\\
(i) For $k \in \{1, 2, \ldots, n\}$, if $y^* \xi_T \in (\gamma_k^+, \gamma_k^-)$, then
\begin{equation}
    X_T^* = \arg \sup_{x \in \D} \[ U \(x\) - y^* \xi_T x \] = a_k.
\end{equation}
(ii) For $k \in \{0, 1, \ldots, n\} $, if $y^*\xi_T \in (\gamma_{k+1}^-, \gamma_k^+) $, then
\begin{equation}
\begin{aligned}
X_T^* &= \arg \sup_{x \in \D} \[U\(x\) - y^* \xi_T x \]\\
&= I\(y^*\xi_T \)\\
&= d_k + \frac{1}{2}\( \(\frac{\gamma_k}{y^* \xi_T} \)^{\frac{1}{\alpha_k}} -\beta_k^2 \(\frac{\gamma_k}{y^* \xi_T} \)^{-\frac{1}{\alpha_k}}\).
\end{aligned}
\end{equation}
(iii) If $y^* \xi_T = \gamma_k^+$ = $\gamma_{k+1}^- $, (i.e. $U^{**}$ is linear on $[a_k,a_{k+1}]$), we have that $X_T^*$ can take any value in $[a_k,a_{k+1}]$. However, we know by previous discussions that 
\begin{equation}
    \p \( \left\{ y^* \xi_T = \gamma_k^+ = \gamma_{k+1}^-  \right\} \) = 0,\; \text{for any}\; \gamma_k^+, \gamma_{k+1}^- \in \R.
\end{equation}
Hence we can take $X_T^*$ by an arbitrary value. This finishes the proof of the first part of Theorem \ref{thm:optimal wealth}.

Now we show explicitly the wealth process that leads to terminal wealth $I\(y^* \xi_T\) $. Define $Z_{t,T} := \frac{\xi_T}{\xi_t}$, which is independent of $\xi_t$ and log-normally distributed.
Since we have already obtained the optimal terminal wealth, we proceed to replicate the strategy that achieves the terminal wealth. Observe that for any self-financing trading strategy $\{Y_t \}_{0\leq t \leq T}$ satisfying $\E \[ \int_0^T Y_s^2 \, \d s \] < \infty $, $\{\xi_t Y_t\}_{0\leq t\leq T}$ is an $\{\F_t\}_{0\leq t\leq T}$-martingale. 
Hence, we can compute the optimal wealth at time $t \in [ 0, T)$ by the martingale representation argument:
{\small
\begin{align}
X_t^* &= \xi_t^{-1} \E \[ \xi_T X_T^* \mid \F_t \] 
= \E \[Z_{t,T} X_T^* \mid \F_t \]\nonumber\\
&= \sum_{k=1}^n \E\[Z_{t,T}a_k \mathds{1}_{\left\{ y^* \xi_t Z_{t,T} \in \( \gamma_k^+, \gamma_k^-\) \right\}}\mid \F_t\]\nonumber\\
&\quad +\sum_{k=0}^n\E \[ Z_{t,T}\left( d_k + \frac{1}{2} \( \( \frac{\gamma_k}{y^* \xi_T} \)^{\frac{1}{\alpha_k}} - \beta_k^2 \( \frac{\gamma_k}{y^* \xi_T} \)^{-\frac{1}{\alpha_k}} \) \right) \mathds{1}_{\left\{y^*\xi_t Z_{t,T} \in \( \gamma_{k+1}^-, \gamma_k^+\) \right\} }\mid \F_t \]\nonumber\\
&= \sum_{k=1}^n \( e^{-\int_t^T r_s \, \d s}a_k \[\Phi\(g_0 \(\frac{\gamma_k^+}{y^* \xi_t} \)\)-\Phi\(g_0 \(\frac{\gamma_k^-}{y^* \xi_t} \)\) \]\)\nonumber\\
&\quad +\sum_{k=0}^n \left\{ e^{-\int_t^T r_s \, \d s}d_k \[\Phi\(g_0 \(\frac{\gamma_{k+1}^-}{y^* \xi_t} \)\)-\Phi\(g_0 \(\frac{\gamma_k^+}{y^* \xi_t} \)\) \]\right.\nonumber\\
&\quad \quad +e^{\(\frac{1}{\alpha_k}-1\)\int_t^T \(r_s + \frac{1}{2\alpha_k}\| \pmb{\theta}_s \|_2^2 \, \)\d s } \frac{1}{2} \( \frac{\gamma_k}{y^* \xi_t} \)^{\frac{1}{\alpha_k}} \( \Phi\( g_1 \( \frac{\gamma_{k+1}^-}{y^* \xi_t} \) \) - \Phi\( g_1 \( \frac{\gamma_k^+}{y^* \xi_t} \) \) \)\nonumber \\
&\quad \quad \left.+e^{\(\frac{1}{\alpha_k}+1\)\int_t^T \(-r_s + \frac{1}{2\alpha_k}\| \pmb{\theta}_s \|_2^2 \, \)\d s  } \( -\frac{1}{2}\)\beta_k^2 \( \frac{\gamma_k}{y^* \xi_t} \)^{-\frac{1}{\alpha_k}} \( \Phi\( g_2 \( \frac{\gamma_{k+1}^-}{y^* \xi_t} \) \) - \Phi\( g_2 \( \frac{\gamma_k^+}{y^* \xi_t} \) \) \) \right\}\mathds{1}_{\{\alpha_k \neq 0\}}\nonumber \\
&= X_t^{D} + X_t^{B} + X_t^{R} + X_t^{\bar{R}}. \label{eq:optimal wealth process}
\end{align}
}In the fourth equality in Eq. \eqref{eq:optimal wealth process}, we compute the expectation by the log-normal distribution of $Z_{t,T} $ and subtly combine and arrange the terms.
Hence, the optimal terminal wealth is as shown in Theorem \ref{thm:optimal wealth}.

\subsection{Proof of Theorem \ref{optimal portfolio}}\label{proof:optimal portfolio}
From Eq. \eqref{eq:form of optimal control}, we can write $X_t^*$ as a function of $\xi_t$: $X_t^* = X_t^*(\xi_t)$.
Apply Itô's formula to $X_t^* = X_t^*( \xi_t)$ and obtain:
\begin{equation}
\pmb{\pi}_t^* = -\(\pmb{\sigma}_t \pmb{\sigma}_t^\intercal \)^{-1} \(\pmb{\mu}_t - r_t \mathbf{1}_m\) \xi_t \frac{\partial X_t^* \(\xi_t \)}{\partial \xi_t}.
\end{equation}
For each $k \in \{ 0, 1, \ldots, n\}$, we compute as follows:\\
(1) If $\alpha_k = 0$, we have
{\small
\begin{align}
\hat{\pmb{\pi}}_{t,k}^{(1)}&=-\(\pmb{\sigma}_t \pmb{\sigma}_t^\intercal \)^{-1} \(\pmb{\mu}_t - r_t \mathbf{1}_m\) \xi_t \frac{\partial}{\partial \xi_t}\( e^{-\int_t^T r_s \, \d s}a_k \[\Phi\(g_0 \(\frac{\gamma_k^+}{y^* \xi_t} \)\)-\Phi\(g_0 \(\frac{\gamma_k^-}{y^* \xi_t} \)\) \]\)\nonumber\\
&= -e^{-\int_t^T r_s \, \d s}a_k \frac{\(\pmb{\sigma}_t \pmb{\sigma}_t^\intercal \)^{-1} \(\pmb{\mu}_t - r_t \mathbf{1}_m\) }{\sqrt{\int_t^T \| \pmb{\theta}_s \|_2^2\, \d s }} \[ \Phi' \( g_0 \( \frac{\gamma_k^+}{y^* \xi_t} \) \) - \Phi' \( g_0 \( \frac{\gamma_k^-}{y^* \xi_t} \) \) \].
\end{align}
}
(2) If $\alpha_k \neq 0$, we have
{\small
\begin{align}
\hat{\pmb{\pi}}_{t,k}^{(2)}&=-\( \pmb{\sigma}_t \pmb{\sigma}_t^\intercal \)^{-1} \( \pmb{\mu}_t - r_t \mathbf{1}_m \) \xi_t \frac{\partial}{\partial \xi_t} X_{t,k}^B\nonumber\\
&=-e^{-\int_t^T r_s \, \d s}d_k \frac{\(\pmb{\sigma}_t \pmb{\sigma}_t^\intercal \)^{-1} \(\pmb{\mu}_t - r_t \mathbf{1}_m\) }{\sqrt{\int_t^T \| \pmb{\theta}_s \|_2^2\, \d s }} \[ \Phi' \( g_0 \( \frac{\gamma_{k+1}^-}{y^* \xi_t} \) \) - \Phi' \( g_0 \( \frac{\gamma_k^+}{y^* \xi_t} \) \) \],\nonumber\\
\hat{\pmb{\pi}}_{t,k}^{(3)} &= -\(\pmb{\sigma}_t \pmb{\sigma}_t^\intercal \)^{-1} \(\pmb{\mu}_t - r_t \mathbf{1}_m\) \xi_t \frac{\partial}{\partial \xi_t} X_{t,k}^R\nonumber\\
&= \frac12\(\pmb{\sigma}_t \pmb{\sigma}_t^\intercal \)^{-1} \(\pmb{\mu}_t - r_t \mathbf{1}_m\) e^{\(\frac{1}{\alpha_k}-1\)\int_t^T \(r_s + \frac{1}{2\alpha_k}\| \pmb{\theta}_s \|_2^2 \, \)\d s }\( \frac{\gamma_k}{y^* \xi_t} \)^{\frac{1}{\alpha_k}}\left\{ \frac{1}{\alpha_k}\[ \Phi \( g_{1,k}\(\frac{\gamma_{k+1}^-}{y^* \xi_t} \) \) \right.\right. \nonumber\\
&\quad \left.\left. - \Phi \(g_{1,k} \( \frac{\gamma_k^+}{y^*\xi_t} \) \) \]  - \frac{1}{\sqrt{\int_t^T \| \pmb{\theta}_s \|_2^2 \, \d s}} \( \Phi' \(g_{1,k}\(\frac{\gamma_{k+1}^-}{y^* \xi_t} \) \) - \Phi' \( g_{1,k} \(\frac{\gamma_k^+}{y^* \xi_t} \) \) \) \right\},\nonumber\\
\hat{\pmb{\pi}}_{t,k}^{(4)} &=-\(\pmb{\sigma}_t \pmb{\sigma}_t^\intercal \)^{-1} \(\pmb{\mu}_t - r_t \mathbf{1}_m\) \xi_t \frac{\partial}{\partial \xi_t} X_{t,k}^{\bar{R}}\nonumber\\
&=\frac12 \beta_k^2 \(\pmb{\sigma}_t \pmb{\sigma}_t^\intercal \)^{-1} \(\pmb{\mu}_t - r_t \mathbf{1}_m\) e^{\(\frac{1}{\alpha_k}+1\)\int_t^T \(-r_s + \frac{1}{2\alpha_k}\| \pmb{\theta}_s \|_2^2 \, \)\d s  }\( \frac{\gamma_k}{y^* \xi_t} \)^{-\frac{1}{\alpha_k}}\left\{ \frac{1}{\alpha_k} \[ \Phi \(g_{2,k} \(\frac{\gamma_{k+1}^-}{y^* \xi_t} \) \) \right. \right. \nonumber \\
&\quad \left.\left. - \Phi \( g_{2,k} \(\frac{\gamma_k^+}{y^* \xi_t}\) \) \] +\frac{1}{\sqrt{\int_t^T \| \pmb{\theta}_s\|_2^2 \, \d s }} \( \Phi' \(g_{2,k}\(\frac{\gamma_{k+1}^-}{y^* \xi_t} \) \) -\Phi' \(g_{2,k}\(\frac{\gamma_k^+}{y^* \xi_t} \) \) \) \right\}.
\end{align}
}Observe that
{\small
\begin{align}
\hat{\pmb{\pi}}_{t,k}^{(3)} &= \(\pmb{\sigma}_t \pmb{\sigma}_t^\intercal \)^{-1} \(\pmb{\mu}_t - r_t \mathbf{1}_m\) X_{t,k}^R \( \frac{1}{\alpha_k} - \frac{ \Phi' \(g_{1,k}\(\frac{\gamma_{k+1}^-}{y^* \xi_t} \) \) - \Phi' \( g_{1,k} \(\frac{\gamma_k^+}{y^* \xi_t} \) \) }{\sqrt{\int_t^T \| \pmb{\theta}_s \|_2^2 \, \d s}\( \Phi \(g_{1,k}\(\frac{\gamma_{k+1}^-}{y^* \xi_t} \) \) - \Phi \( g_{1,k} \(\frac{\gamma_k^*}{y^* \xi_t} \) \) \)}  \),\nonumber\\
\hat{\pmb{\pi}}_{t,k}^{(4)} &= \(\pmb{\sigma}_t \pmb{\sigma}_t^\intercal \)^{-1} \(\pmb{\mu}_t - r_t \mathbf{1}_m\) \(-X_{t,k}^{\bar{R}}\) \( \frac{1}{\alpha_k} - \frac{ \Phi' \(g_{1,k}\(\frac{\gamma_{k+1}^-}{y^* \xi_t} \) \) - \Phi' \( g_{1,k} \(\frac{\gamma_k^+}{y^* \xi_t} \) \) }{\sqrt{\int_t^T \| \pmb{\theta}_s \|_2^2 \, \d s}\( \Phi \(g_{1,k}\(\frac{\gamma_{k+1}^-}{y^* \xi_t} \) \) - \Phi \( g_{1,k} \(\frac{\gamma_k^+}{y^* \xi_t} \) \) \)}  \).
\end{align}
}Hence, we can rearrange the terms to get
{\small
\begin{align}
\hat{\pmb{\pi}}_{t,k}^{(3)} + \hat{\pmb{\pi}}_{t,k}^{(4)} &= \frac{\(\pmb{\sigma}_t \pmb{\sigma}_t^\intercal \)^{-1} \(\pmb{\mu}_t - r_t \mathbf{1}_m\)}{\alpha_k} \sqrt{ \(X_{t,k}^R + X_{t,k}^{\bar{R}}\)^2 + b_{t,k}}+ \frac{\(\pmb{\sigma}_t \pmb{\sigma}_t^\intercal \)^{-1} \(\pmb{\mu}_t - r_t \mathbf{1}_m\)}{\sqrt{\int_t^T \| \pmb{\theta}_s \|_2^2 \, \d s}}\nonumber\\
&\quad\times \( X_{t,k}^R \frac{\Phi' \(g_{1,k}\(\frac{\gamma_{k+1}^-}{y^* \xi_t} \) \) - \Phi' \( g_{1,k} \(\frac{\gamma_k^+}{y^* \xi_t} \) \) }{\Phi \(g_{1,k}\(\frac{\gamma_{k+1}^-}{y^* \xi_t} \) \) - \Phi \( g_{1,k} \(\frac{\gamma_k^+}{y^* \xi_t} \) \)} + X_{t,k}^{\bar{R}} \frac{\Phi' \(g_{2,k}\(\frac{\gamma_{k+1}^-}{y^* \xi_t} \) \) - \Phi' \( g_{2,k} \(\frac{\gamma_k^+}{y^* \xi_t} \) \)}{\Phi \(g_{2,k}\(\frac{\gamma_{k+1}^-}{y^* \xi_t} \) \) - \Phi \( g_{2,k} \(\frac{\gamma_k^+}{y^* \xi_t} \) \)} \).
\end{align}
}Then we have the expressions in Theorem \ref{optimal portfolio}.

\subsection{Proof of Theorem \ref{thm:asymptotic analysis}}\label{proof of aymptotic analysis}
First, note that for $\gamma_0^+ = \infty$, we have for any $\xi_t$, 
\begin{equation}
\frac{\gamma_0^+}{y^*\xi_t} = \infty,\quad g_i \(\frac{\gamma_0^+}{y^*\xi_t}\) = -\infty, \quad
\Phi'\( g_i\(\frac{\gamma_0^+}{y^*\xi_t}\) \) = 0,\quad \Phi\( g_i\(\frac{\gamma_0^+}{y^*\xi_t}\) \) = 0,\quad i = 0, 1, 2.
\end{equation}
Similarly, for $\gamma_{n+1}^- = 0$, we have for any $\xi_t$,
\begin{equation}
\frac{\gamma_{n+1}^-}{y^*\xi_t} = 0,\quad g_i \(\frac{\gamma_{n+1}^-}{y^*\xi_t}\) = \infty, \quad
\Phi'\( g_i\(\frac{\gamma_{n+1}^-}{y^*\xi_t}\) \) = 0,\quad \Phi\( g_i\(\frac{\gamma_{n+1}^-}{y^*\xi_t}\) \) = 1,\quad i = 0, 1, 2.
\end{equation}
(a) For fixed $\gamma \in (0,\infty)$ and $y^* >0$, as $\xi_t \rightarrow 0$, we have
\begin{equation}
\frac{\gamma}{y^* \xi_t } \rightarrow \infty,\quad g_{i}\(\frac{\gamma}{y^* \xi_t} \) \rightarrow -\infty, \quad
\Phi' \(g_i\(\frac{\gamma}{y^* \xi_t} \) \) \rightarrow 0,\quad \Phi \(g_i\(\frac{\gamma}{y^* \xi_t} \) \) \rightarrow 0,\quad i=0,1,2.
\end{equation}
Recall the expression of $X_t^*$ in \eqref{part_t_wealth}. For $k \in \{1, \ldots, n\}$, as $\xi_t \rightarrow 0$,
\begin{equation}
X_{t,k}^D = e^{-\int_t^T r_s \, \d s} a_k \[ \Phi \( g_0\( \frac{\gamma_k^+}{y^* \xi_t} \) \) - \Phi \( g_0\( \frac{\gamma_k^-}{y^* \xi_t} \) \) \] = 0.
\end{equation}
Moreover, for $k\in \{0, 1, \ldots, n-1\}$, 
\begin{equation}
\Phi \( g_{i,k} \(\frac{\gamma_{k+1}^-}{y^*\xi_t}\) \) - \Phi\( g_{i,k}\( \frac{\gamma_k^+}{y^* \xi_t} \) \) = 0, \quad i = 0, 1, 2.
\end{equation}
Hence, we have
\begin{equation}
\begin{aligned}
X_t^D &\rightarrow 0,\quad
X_t^B \rightarrow d_n e^{-\int_t^T r_s \, \d s},\\
X_t^R 
&\rightarrow \infty,\quad
X_t^{\bar{R}} 
\rightarrow 0.
\end{aligned}
\end{equation}
Combining the parts above, we get
$
X_t^* \rightarrow \infty.
$
Then it follows from Theorem \ref{optimal portfolio} that
\begin{equation}
\pmb{\pi}_t^{(1)} \rightarrow \pmb{\infty}.
\end{equation}
Note that
\begin{equation}
\Phi' \( g_i\(\frac{\gamma}{y^*\xi_t} \) \) \rightarrow 0, \quad i = 0, 1, 2,
\end{equation}
for any $\gamma \in (0,\infty]$. Hence, we have
\begin{equation}
\pmb{\pi}_t^{(3)} \rightarrow \mathbf{0},\quad \pmb{\pi}_t^{(4)} \rightarrow \mathbf{0}.
\end{equation}
Now we deal with $\pmb{\pi}_t^{(2)}$:
{\small
\begin{align}
\pmb{\pi}_{t}^{(2)} &= -\frac{\(\pmb{\sigma}_t\pmb{\sigma}_t^\intercal\)^{-1} \(\pmb{\mu}_t - r_t \mathbf{1}_m \) }{\sqrt{\int_t^T \| \pmb{\theta}_s \|_2^2 \, \d s}} \sum_{k=0}^n \left\{ X_{t,k}^R \frac{ \Phi' \(g_{1,k} \(\frac{\gamma_{k+1}^-}{y^* \xi_t} \) \) - \Phi'\(g_{1,k} \(\frac{\gamma_k^+}{y^* \xi_t} \)\)}{ \Phi \(g_{1,k} \(\frac{\gamma_{k+1}^-}{y^* \xi_t} \) \) - \Phi\(g_{1,k} \(\frac{\gamma_k^+}{y^* \xi_t} \)\)} \right.\nonumber \\
&\quad \quad \quad \quad \quad \quad \quad \quad \quad +\left. X_{t,k}^{\bar{R}} \frac{ \Phi' \(g_{2,k} \(\frac{\gamma_{k+1}^-}{y^* \xi_t} \) \) - \Phi'\(g_{2,k} \(\frac{\gamma_k^+}{y^* \xi_t} \)\)}{ \Phi \(g_{2,k} \(\frac{\gamma_{k+1}^-}{y^* \xi_t} \) \) - \Phi\(g_{2,k} \(\frac{\gamma_k^+}{y^* \xi_t} \)\)} \right\}\times \mathds{1}_{\left\{ \alpha_k \neq 0 \right\}}\nonumber\\
&\rightarrow -\frac{\(\pmb{\sigma}_t\pmb{\sigma}_t^\intercal\)^{-1} \(\pmb{\mu}_t - r_t \mathbf{1}_m \) }{2\sqrt{\int_t^T \| \pmb{\theta}_s \|_2^2 \, \d s}}e^{\(-1+\frac{1}{\alpha_n}\)\int_t^T \(r_s + \frac{1}{2\alpha_n}\| \pmb{\theta}_s \|_2^2 \, \)\d s  } \( \frac{\gamma_n}{y^* \xi_t} \)^{\frac{1}{\alpha_n}} \Phi'\( g_{1,n} \( \frac{\gamma_n^+}{y^* \xi_t} \) \)\nonumber\\
&\rightarrow \mathbf{0}.
\end{align}
}Combining all above, we have
{\small
\begin{align}
\frac{\pmb{\pi}_t^*}{X_t^*}
&\rightarrow \(\pmb{\sigma}_t\pmb{\sigma}_t^\intercal\)^{-1} \(\pmb{\mu}_t - r_t \mathbf{1}_m \) \frac{1}{\alpha_n} \sqrt{ \(1 + \frac{X_{t,n}^{\hat{R}}}{X_{t,n}^R} \)^2 + \frac{b_{t,n}}{\(X_{t,n}^R\)^2} }\nonumber\\
&\rightarrow \frac{\(\pmb{\sigma}_t\pmb{\sigma}_t^\intercal\)^{-1} \(\pmb{\mu}_t - r_t \mathbf{1}_m \)}{\alpha_n} > \mathbf{0}.\quad \text{(component-wise)}
\end{align}
}Hence, $
\pmb{\pi}_t^* \rightarrow \pmb{\infty}.
$

\noindent
(b) For fixed $\gamma \in (0,\infty)$ and $y^* > 0$, as $\xi_t \rightarrow \infty$, we have
\begin{equation}
\frac{\gamma}{y^*\xi_t} \rightarrow 0, \quad g_i\( \frac{\gamma}{y^*\xi_t} \) \rightarrow \infty, \quad 
\Phi'\( g_i\(\frac{\gamma}{y^*\xi_t} \) \) \rightarrow 0, \quad \Phi\( g_i\( \frac{\gamma}{y^* \xi_t} \) \) \rightarrow 1, \quad i = 1,2,3.
\end{equation}
Hence, for $k \in \{ 0,1, \ldots, n\}$, 
\begin{equation}
\Phi' \( g_{i,k}\( \frac{\gamma_{k+1}^-}{y^*\xi_t} \) \) - \Phi' \( g_{i,k}\( \frac{\gamma_{k}^+}{y^*\xi_t} \) \) = 0-0 = 0, \quad i = 1,2,3.
\end{equation}
Also, for $k \in \{ 1, 2, \ldots, n\}$, 
\begin{equation}
\Phi \( g_{i,k}\( \frac{\gamma_{k+1}^-}{y^*\xi_t} \) \) - \Phi \( g_{i,k}\( \frac{\gamma_{k}^+}{y^*\xi_t} \) \) = 1 - 1 = 0,\quad i = 1,2,3.
\end{equation}
The only term left is
\begin{equation}
\Phi \( g_{i,0}\( \frac{\gamma_{1}^-}{y^*\xi_t} \) \) - \Phi \( g_{i,0}\( \frac{\gamma_{0}^+}{y^*\xi_t} \) \) = 1, \quad i = 1,2,3.
\end{equation}
It follows from Theorem \ref{thm:optimal wealth} that as $\xi_t \rightarrow \infty$,
\begin{equation}
\begin{aligned}
X_t^D &\rightarrow 0,\quad
X_t^B \rightarrow d_0 e^{-\int_t^T r_s \, \d s},\\
X_t^R 
&\rightarrow 0,\quad
X_t^{\bar{R}} 
\rightarrow -\infty.
\end{aligned}
\end{equation}
Similarly as in part (a), we have
\begin{equation}
\pmb{\pi}_t^{(1)} \rightarrow \pmb{\infty},\quad
\pmb{\pi}_t^{(3)} \rightarrow \mathbf{0},\quad
\pmb{\pi}_t^{(4)} \rightarrow \mathbf{0},
\end{equation}
and
{\small
\begin{align}
\pmb{\pi}_{t}^{(2)} &= -\frac{\(\pmb{\sigma}_t\pmb{\sigma}_t^\intercal\)^{-1} \(\pmb{\mu}_t - r_t \mathbf{1}_m \) }{\sqrt{\int_t^T \| \pmb{\theta}_s \|_2^2 \, \d s}} \sum_{k=0}^n \left\{ X_{t,k}^R \frac{ \Phi' \(g_{1,k} \(\frac{\gamma_{k+1}^-}{y^* \xi_t} \) \) - \Phi'\(g_{1,k} \(\frac{\gamma_k^+}{y^* \xi_t} \)\)}{ \Phi \(g_{1,k} \(\frac{\gamma_{k+1}^-}{y^* \xi_t} \) \) - \Phi\(g_{1,k} \(\frac{\gamma_k^+}{y^* \xi_t} \)\)} \right.\nonumber \\
&\quad \quad \quad \quad \quad \quad \quad \quad \quad +\left. X_{t,k}^{\bar{R}} \frac{ \Phi' \(g_{2,k} \(\frac{\gamma_{k+1}^-}{y^* \xi_t} \) \) - \Phi'\(g_{2,k} \(\frac{\gamma_k^+}{y^* \xi_t} \)\)}{ \Phi \(g_{2,k} \(\frac{\gamma_{k+1}^-}{y^* \xi_t} \) \) - \Phi\(g_{2,k} \(\frac{\gamma_k^+}{y^* \xi_t} \)\)} \right\}\times \mathds{1}_{\left\{ \alpha_k \neq 0 \right\}}\nonumber\\
&\rightarrow -\frac{\(\pmb{\sigma}_t\pmb{\sigma}_t^\intercal\)^{-1} \(\pmb{\mu}_t - r_t \mathbf{1}_m \) }{2\sqrt{\int_t^T \| \pmb{\theta}_s \|_2^2 \, \d s}}e^{\(-1-\frac{1}{\alpha_0}\)\int_t^T \(r_s - \frac{1}{2\alpha_0}\| \pmb{\theta}_s \|_2^2 \, \)\d s  } \beta_0^2 \( \frac{\gamma_0}{y^* \xi_t} \)^{-\frac{1}{\alpha_0}} \Phi'\( g_{2,0} \( \frac{\gamma_1^-}{y^* \xi_t} \) \)\nonumber\\
&\rightarrow \mathbf{0}.
\end{align}
}Combining all above, we have
{\small
\begin{align}
\frac{\pmb{\pi}_t^*}{X_t^*}
&\rightarrow -\( \pmb{\sigma}_t \pmb{\sigma}_t^\intercal \)^{-1} \( \pmb{\mu}_t - r_t \mathbf{1}_m \) \frac{1}{\alpha_0} \sqrt{ \( \frac{X_{t,0}^R}{X_{t,0}^{\bar{R}}} + 1 \)^2 + \frac{b_{t,0}}{\( X_{t,0}^{\bar{R}} \)^2} }\nonumber\\
&\rightarrow -\frac{\(\pmb{\sigma}_t \pmb{\sigma}_t^\intercal \)^{-1} \( \pmb{\mu}_t - r_t \mathbf{1}_m \)}{\alpha_0} < \mathbf{0}.\quad (\text{component-wise})
\end{align}
}Hence, $
\pmb{\pi}_t^* \rightarrow \pmb{\infty}.
$
\subsection{Proof of Theorem \ref{thm:incentive portfolio}}
The concave envelope of \eqref{eq:incentive utility} is given by
{\small
\begin{equation}\label{eq:incentive envelope}
    \Tilde{U}\(x\) := \left\{
    \begin{aligned}
    & v^{1-\alpha} \hat{U}\(x; \alpha, \frac{\beta}{v^2}, d\) + u_0, && x < a_1;\\
    & v^{1-\alpha} \hat{U}'\(a_1; \alpha, \frac{\beta}{v^2},d \) \( x - a_1\) + \hat{U}\(a_1; \alpha, \frac{\beta}{v^2}, 0\), && a_1 \leq x < a_2;\\
    & \(w+v\)^{1-\alpha} \hat{U}\(x; \alpha, \frac{\beta}{\(w+v\)^2}, \frac{wB_T}{w+v}\) + u_2, && x \geq a_2.\\
    \end{aligned}
    \right.
\end{equation}
}From the arguments in Section \ref{sec:technical discussions}, Problem \eqref{eq:incentive problem} is equivalent to the problem replaced with its concave envelope:
\begin{equation}
    \max_{\pmb{\pi}\in \mathcal{V}} E\[ \Tilde{U}\(X_T\) \].
\end{equation}
Applying Theorem \ref{optimal portfolio} to $\Tilde{U}$, we get Theorem \ref{thm:incentive portfolio}.

\section{Volatility estimation}\label{appendix:empirical}
There are a lot of models to study the properties of the volatility, including the stochastic and implied volatility models; see e.g., \cite{BBF2002}, \cite{GHLOW2010}, \cite{DTY2016} and \cite{CKN2018}. Since this study does not focus on the market modeling, we do not adopt the complicated models. We describe three estimation methods of $\{\pmb{\sigma}_t \}_{0 \leq t \leq T}$ in the illustration on the performance of our optimal portfolio.

\subsection{Historical Volatility}
We assume that $\{\pmb{\sigma}_t \}_{0\leq t\leq T}$ in \eqref{wealthprocess} directly depicts the historical returns of the risky assets. From the in-data sample indicated in Section \ref{sec:empirical}, we get the covariance matrix of the returns of the stocks $\pmb{\Sigma}$. Since $\pmb{\Sigma}$ is symmetric, we can always find the matrix $\pmb{\sigma}$ satisfying 
$\pmb{\sigma}\pmb{\sigma}^\intercal = \pmb{\Sigma}.
$ 
We use the $i$th row of matrix $\pmb{\sigma}$ as the volatility of $i$th stock. That is,
$
\d S_{i,t} = \mu_{i} S_{i,t} \d t + S_{i,t} \pmb{\sigma}_i \d \mathbf{W}_t,
$ where $\mu_i$ is the expected return of $i$th stock and $\pmb{\sigma}_i$ is the $i$th row of $\pmb{\sigma}$.

\subsection{Implied Volatility}\label{sec:implied volatility}
Although the method of historical volatility is easy for implementation, it fails to capture some characteristics of the Black--Scholes model.
Another way to calculate $\{\pmb{\sigma}_t \}_{0\leq t \leq T}$ is through the Black--Scholes formula for European option prices. In a multi-asset Black--Scholes model, let $P_{i,t}^K$ denote the price of a put option signed on the risky asset $S_{i,t}$ with a strike price $K$. According to \cite{BS1972}, we have (with a slight abuse of notation)
{\small
\begin{equation}\label{eq:black scholes put}
P_{i,t}^K = Ke^{-\int_t^T r_s \, \d s}\Phi\(-d_2 \) - S_{i,t} \Phi\(-d_1\),
\end{equation}
}where
{\small
\begin{equation}
d_1 := \frac{1}{\sqrt{\int_t^T \| \pmb{\sigma}_{i,s} \|_2^2 \, \d s}}  \( \log\(\frac{S_t}{K}\) + \int_t^T \( r_s + \frac{\| \pmb{\sigma}_{i,s}\|_2^2 }{2} \) \, \d s \),\quad
d_2 := d_1 - \sqrt{ \int_t^T \| \pmb{\sigma}_{i,s}\|_2^2 \, \d s }.
\end{equation}
}Hence, we can calculate the norm of the $i$th row of $\pmb{\sigma}_t$ and reallocate the weight to each component according to the correlation among the risky assets. However, in security markets, the value of $\| \pmb{\sigma}_{i,t} \|_2^2$ varies corresponding to different $K$. Such phenomena are referred to as the \textit{volatility smile} or \textit{volatility smirk} based on different scenarios.
An interpretation of the volatility ``smile"  is the imperfection of financial markets. For example, put options on indexes usually incur the volatility ``smirk". Empirically, put options with higher strike prices imply lower volatility. Hence, the smirk can be viewed as the consequence of the bid--ask spread; see \cite{PRS2002}. Since most traders long the indexes, they would like to hedge the risk by longing the put option on index-tracking ETFs. To hedge the extreme risks only, they tend to choose the put option with lower strike prices. This imbalance in demand results in an increase to put option prices with lower strike prices and a drop to the prices with higher strike prices. Hence, it is reasonable to estimate the real volatility by a weighted average of the implied volatilities calculated from put options with different levels of strike prices. As shown in \cite{EG2002}, the arithmetic average of implied volatility is good enough for prediction purposes. Since this study does not focus on econometrics, we simply choose the $L^2$ norm of the $i$th asset's implied volatility to be $\| \pmb{\sigma}_{i,t} \|_2 $. We use the historical return to get the correlation matrix of the risky assets, denoted by $\mathbf{C}$. Then we find the matrix $\mathbf{B}$ satisfying
\begin{equation}\label{eq:corr matrix}
\mathbf{B} \mathbf{B}^\intercal = \mathbf{C}.
\end{equation}
After scaling each row of $\mathbf{B}$ to coincide with the norm of implied volatilities, we have the desired volatility matrix.

\subsection{MLE Volatility}
The maximum likelihood estimator (MLE) method is another approach to estimate the market parameters. Similarly as in Appendix \ref{sec:implied volatility}, we calculate $\|\pmb{\sigma}_{i,t}\|_2^2$ for $i = 1, \ldots, m$ and scale the correlation matrix $\mathbf{B}$ given by \eqref{eq:corr matrix} accordingly. Let $p_{i,k}$ denote the return of the $i$th risky asset on day $k$. According to Section 9.3.2 in \cite{CLM1997}, the MLE estimator of $\|\pmb{\sigma}_{i,t}\|_2^2$ is given by
\begin{equation}
\|\hat{\pmb{\sigma}}_i\|_2^2 = \frac{1}{nh} \sum_{k=1}^n \(p_{i,k} - \frac{1}{nh}\sum_{k=1}^n p_{i,k} \),\quad i = 1,\ldots, m,
\end{equation}where $n$ denotes the number of total trading days and $h$ is the time gap between each trading day. In our context, $n = 8 \times 252$ and $h = 1 / 252$.
\end{document}